\newtheorem{theorem}{Theorem}[section]
\newtheorem{lemma}{Lemma}[section]
\newtheorem{definition}{Definition}[section]
\newtheorem{corollary}{Corollary}[section]
\newtheorem{proposition}{Proposition}[section]
\newtheorem{remark}{Remark}[section]
\newcommand{\tab}{\hspace*{2em}}
\numberwithin{equation}{section}
\DeclareMathOperator{\Arg}{Arg}
\DeclareMathOperator{\diag}{diag}
\DeclareMathOperator{\Span}{span}
\newcommand{\CC}{\mathbb{C}}
\newcommand{\NN}{\mathbb{N}}
\newcommand{\RR}{\mathbb{R}}
\newcommand{\cj}{\cv^{(j)}}
\newcommand{\dt}{\textrm{\normalfont d}}
\renewcommand{\vec}[1]{\boldsymbol{\mathrm{#1}}}
\newcommand{\xv}{\mathcal{X}} 
\newcommand{\yv}{\mathcal{Y}} 
\newcommand{\xvec}{\vec{x}}
\newcommand{\cv}{\mathcal{C}} 
\newcommand{\vv}{\vec{v}}
\newcommand{\wv}{\vec{w}}
\newcommand{\WB}{\mathcal{W}}
\newcommand{\uv}{\vec{u}}
\newcommand{\UB}{\mathcal{U}}
\newcommand{\zv}{\vec{z}}
\newcommand{\av}{\vec{a}}
\newcommand{\bv}{\vec{b}}
\newcommand{\fv}{\vec{f}}
\newcommand{\Lx}{L({\xv})}
\newcommand{\VV}{\mathbf{V}}
\newcommand{\WW}{\mathbf{W}}
\newcommand{\UU}{\mathbf{U}}
\newcommand{\EE}{\mathbf{E}}
\newcommand{\HilbM}{\bar{\mathbf{H}}} 
\newcommand{\AAA}{\mathbf{A}}
\newcommand{\BBB}{\mathbf{B}}
\newcommand{\CCC}{\mathbf{C}}
\newcommand{\QQ}{\mathbf{Q}}
\newcommand{\RRR}{\mathbf{R}}
\newcommand{\II}{\mathbf{I}}
\newcommand{\GG}{\mathbf{G}}
\newcommand{\VVV}{\widetilde{\mathbf{V}}}
\newcommand{\DD}{\boldsymbol{\mathcal{D}}}
\newcommand{\PP}{\mathbf{P}}
\newcommand{\PM}[1][m]{\mathbf{Q}_{#1}} 
\newcommand{\MS}{\mathcal{M}}
\newcommand{\MO}{\mathcal{O}}
\newcommand{\sep}{\eta}
\DeclareRobustCommand*\cal{\@fontswitch\relax\mathcal}
\begin{document}
\title[Vandermonde matrices with clustered nodes] {The spectral
  properties of Vandermonde matrices with clustered nodes}

\author[D.Batenkov]{Dmitry Batenkov} \address{Department of Applied
Mathematics, School of Mathematical Sciences, Tel Aviv University,
P.O. Box 39040, Tel Aviv 6997801, Israel}
\email{dbatenkov@tauex.tau.ac.il} \thanks{}

\author[B.Diederichs]{Benedikt Diederichs} \address{University of
  Passau and Fraunhofer IIS Research Group Knowledge Based Image
  Processing, Passau, Germany}
\email{benedikt.diederichs@uni-passau.de} \thanks{}

\author[G. Goldman]{Gil Goldman} \address{Department of Mathematics,
  The Weizmann Institute of Science, Rehovot 76100, Israel}
\email{gil.goldman@weizmann.ac.il} \thanks{}

\author[Y. Yomdin]{Yosef Yomdin} \address{Department of Mathematics,
  The Weizmann Institute of Science, Rehovot 76100, Israel}
\email{yosef.yomdin@weizmann.ac.il}

\subjclass[2010]{Primary 15A18, 65T40, 65F20.}  \keywords{Vandermonde matrices with nodes 
	on the unit circle, nonuniform Fourier matrices,
	sub-Rayleigh resolution, singular values, super-resolution, subspace angles, condition number. 
 }  \date{}

\begin{abstract}
  We study rectangular Vandermonde matrices $\VV$ with $N+1$ rows and
  $s$ irregularly spaced nodes on the unit circle, in cases where
  some of the nodes are ``clustered'' together -- the elements inside each cluster being
  separated by at most $h \lesssim {1\over N}$, and the
  clusters being separated from each other by at least $\theta \gtrsim {1\over N}$. 
  We show that any pair of column subspaces corresponding to two different clusters are nearly orthogonal: 
  the minimal principal angle between them is at most  
  $$\frac{\pi}{2}-\frac{c_1}{N \theta}-c_2 N h,$$ 
  for some constants $c_1,c_2$ depending only on the multiplicities of the
  clusters.
  As a result, spectral analysis of $\VV_N$ is significantly
  simplified by reducing the problem to the analysis of each cluster
  individually.
  Consequently we derive accurate estimates for 1) all the singular values
  of $\VV$, and 2) componentwise condition numbers for the linear
  least squares problem.  Importantly, these estimates are exponential
  only in the local cluster multiplicities, while changing at most	
  linearly with $s$.
\end{abstract}

\maketitle

\section{Introduction}\label{sec.intro}

\subsection{Background}

For an ordered set of distinct nodes $\xv=\{ x_1,\ldots,x_s\}$ with
$x_j \in (-\pi,\pi]$, and $N \geq s-1$, we consider the
$(N+1) \times s$ Vandermonde matrix $\VV=\VV_{N}(\xv)$ with nodes
$\{e^{\imath x_j}\}_{j=1}^s$, given by\footnote{Note a slight abuse of notation as $\VV$ depends not only on
  $\xv$, but also on the ordering of the nodes. Therefore, we always
  assume that the set of nodes comes with an arbitrary, but fixed,
  ordering.  }

\begin{equation}
  \label{eq.vand-def}
  \VV_N(\xv)=
  \begin{bmatrix}
    1 & 1 & \dots & 1 \\
    e^{\imath x_1} & e^{\imath x_2} & \dots & e^{\imath x_s} \\
    e^{\imath 2x_1} & e^{\imath 2x_2} & \dots & e^{\imath 2x_s} \\
    \vdots & \vdots & \vdots & \vdots \\
    e^{\imath Nx_1} & e^{\imath Nx_2} & \dots & e^{\imath Nx_s}
  \end{bmatrix}.
\end{equation}

Square and rectangular Vandermonde matrices have been studied quite
extensively by numerical analysts due to their close relation to
polynomial interpolation and approximation, quadrature and related
topics, see
e.g. \cite{crdova1990,beckermann2000,gautschi_inverses_1962,gautschi_inverses_1963,gautschi_inverses_1978,gautschi_norm_1974,eisinberg2001,beckermann2019,pan2016,tyrtyshnikov1994,beckermann_sensitivity_1999}
and references therein. The matrices $\VV_N$ as in \eqref{eq.vand-def}
have also received recent attention in the applied harmonic analysis
community with relation to the problem of mathematical
super-resolution
\cite{batenkov2019a,batenkov_stability_2016,moitra_super-resolution_2015,batenkov2018a,li2019,li_stable_2017,diederichs2019,demanet_recoverability_2014,kunis2019a,kunis2018},
where the magnitude of their smallest singular value controls the
limit of stable recovery of point sources from bandlimited
data. Similar connections exist in spectral estimation and direction
of arrival problems, where $\VV_N$ are closely related to data
covariance matrices
\cite{lee1992,stoica_spectral_2005,tuncer_classical_2009,yang2018}.

While Vandermonde matrices with real nodes are known to be
ill-conditioned (for instance, the condition number must grow
exponentially in $s$, see
\cite{beckermann_sensitivity_1999,beckermann2019,pan2016} and
references therein), the situation may be drastically different for
complex nodes. Indeed, the columns of $\VV_N$ become orthogonal when
$\xv$ is a subset of the roots of unity of order $N+1$, but on the
other hand may be arbitrary close to each other if two or more nodes
collide.  When the minimal distance\footnote{All distances are in the
  wrap-around sense, to be defined precisely below.} between any two
nodes in $\xv$ (denoted by $\sep$ in this section) is larger than
${1\over N}$, the matrix $\VV_N(\xv)$ is known to be
well-conditioned. The sharpest result in that direction was recently
presented in \cite{diederichs2019}, building upon earlier results
\cite{aubel_vandermonde_2017,moitra_super-resolution_2015,li_stable_2017,bazan_conditioning_2000}. On
the other hand, the singular value decomposition of $\VV_N$ in the
special case of equispaced and nearly colliding nodes (i.e.
$x_j=x_0 + j\sep$ and $N\sep\ll 1$) can be derived from the seminal
works on the spectral concentration problem by Slepian and co-workers,
see \cite{batenkov2018a,slepian_prolate_1978,slepian1968} and
references therein. In this case, $\VV_N$ becomes severely
ill-conditioned, e.g. $\kappa\left(\VV_N\right)\sim (N\sep)^{-s+1}$,
analogous to the situation with real-valued nodes. Between the two
extremes mentioned above, the general case of irregularly spaced and
partially colliding nodes is much less investigated.

\subsection{The partial clustering model}

In the context of super-resolution (see references in
the previous section, and in particular the detailed discussions in
\cite{batenkov2018a,batenkov2019a,demanet_recoverability_2014}), the
phase transition $\sep\approx 1/N$ corresponds to the classical
Rayleigh-Nyquist limit.  In the case $\eta \ll {1\over N}$, it was
shown in e.g. \cite{li_stable_2017} that the error amplification for
recovering a sparse atomic measure supported on $\xv$ from $N$ Fourier
coefficients can be as large as $\approx (N\sep)^{-2s+1}$, and
moreover this worst-case scenario happens precisely when all the nodes
are ``clumped'' together (this is in fact equivalent to Slepian's
equispaced configuration). In applications of super-resolution (see
e.g. \cite{bhandari2019}), frequently there exists a prior information
that only a small number of nodes, say $\ell\leq s$, can become very
close to each other (with respect to the Rayleigh length scale $1/N$),
in which case one can expect much more stable recovery. Indeed, it was
very recently shown in \cite{batenkov2018a} that in this case, the
minimax error rate scales like $(N\eta)^{-2\ell+1}$, albeit with
proportionality constants which decay exponentially in $s$. In
\cite{batenkov2019a} a closely related problem of super-resolution
from continuous frequency measurements in a band
$\left[-\Omega,\Omega\right]$ under the clustered model was
investigated in the sub-Rayleigh regime $\eta\ll{1\over\Omega}$, and
the error rate $(\Omega \eta)^{-2\ell+1}$ was established. In these
works, the error rate is directly linked to the smallest singular
value of $\VV_N$ and its close relative, the confluent Vandermonde
matrix \cite{gautschi_inverses_1978}.

Several other recent works by different groups investigated the
matrices $\VV_N$ under the partial clustering assumptions
\cite{akinshin_accuracy_2015,akinshin_error_2017,kunis2018,kunis2019a,li_stable_2017,li2019},
similarly showing that $\VV_N$ is only mildly ill-conditioned if
$\ell\ll s$ (see Subsection \ref{sub:related-work} below). Motivated
by the above developments, in this paper we continue the investigation
of the partial clustering model.

\subsection{Contributions}
We suppose that the nodes $\{x_j\}$ are divided into disjoint groups
(clusters), each of which is contained in an interval of length at
most $h \lesssim {1\over N}$, while the inter-cluster distances are at
least $\theta\gtrsim {1\over N}$ (see Definition
\ref{def.partial.cluster} below). Our main result (Theorem
\ref{thm.orth.spaces}) establishes that the subspaces of $\CC^{N+1}$
corresponding to each cluster (the so-called ``cluster subspaces'',
see Definition \ref{def.l.x} below) are nearly orthogonal. In more
detail, we show that for large enough $N\theta$ and small enough
$N h$, the complementary subspace angle between each pair of cluster
subspaces is at most ${c_1\over{N\theta}} + c_2N h$ for some constants
$c_1,c_2$ depending only on the multiplicities of (number of nodes in)
the clusters. {\it As a result, spectral analysis of $\VV_N$ is
  significantly simplified, reducing the problem to the analysis of
  each cluster separately} (see Theorem \ref{thm.union}). To
demonstrate this general principle, we establish the following results
for the case that the points are approximately uniformly distributed
in each cluster:
\begin{enumerate}
\item We derive full asymptotic description of \emph{all the singular
  values} of $\VV_N$ (Theorems \ref{thm:single.cl}, \ref{thm.union}).
\item In the particular case where the size of all the clusters is of the same order $h$, 
  the singular values of $N^{-1/2}\VV_N$ have the following simple scales (up to constants): 
  $$(Nh)^0,....,(Nh)^{\ell-1},$$ 
  where $\ell$ is the maximal multiplicity of any cluster. Furthermore, the number of
  singular values scaling as $(Nh)^{j-1}$ is exactly equal to the number of clusters of
  multiplicity at least $j$ (see Corollary \ref{cor:full-sing-vals}).
\item In Theorem \ref{thm:ls-accuracy} we obtain \emph{componentwise}
  stability bounds of the linear least squares problem
  $$\min_{\av} \|\VV_N(\xv) \av - \bv\|_2.$$ In particular, 
  we show that the entries of $\av$ corresponding to the nodes of $\xv$ inside 
  a cluster of size $h$ and multiplicity $\ell$ (i.e. $h,\ell$ may be different for different clusters), 
  have condition numbers proportional to $(Nh)^{1-\ell}$, with the proportionality
  constant scaling \emph{linearly} with $s$. In contrast, without
  prior geometric assumptions, all the entries have condition number
  on the scale of $\left(N\sep\right)^{1-s}$ (where $\eta$ is the global minimal separation of the nodes).
\end{enumerate}

\subsection{Related work and discussion}\label{sub:related-work}

The scaling
$\sigma_j\left({1\over{\sqrt{N}}}\VV_N\right) \approx
\left(N\sep\right)^{j-1}$ for a single cluster can also be derived
from \cite{slepian_prolate_1978,lee1992}. In the proof of Theorem
\ref{thm:single.cl} we use a particular technique based on Taylor
expansion of the kernel matrix $\VV_N^H \VV_N$, used in
\cite{wathen2015}. It will be interesting to investigate the
possibility of extending our result to more general types of matrices,
for instance those considered in \cite{lee1992}. Another interesting
question is to allow the nodes of the Vandermonde matrix to be in a
small annulus containing the unit circle, as in \cite{pan2016}.

Several previous works studied the behaviour of the minimal singular
value of clustered Vandermonde matrices in the regime $N\sep \ll 1$.
Below, positive constants that are independent of $N,\sep$ are
indicated by $c_1,c_2,\dots,c,c',\dots$. From Corollary
\ref{cor:full-sing-vals} it directly follows that
\begin{equation}\label{eq:sigma-min}
  \sigma_{\min}\left({1\over{\sqrt{N}}}\VV_N\right) \geq c (N\sep)^{\ell-1},\quad N\sep < c',
\end{equation}
where, again, $\ell$ is the largest multiplicity. This scaling has
been previously established in
\cite{batenkov2018a,li_stable_2017,kunis2019a}, by completely
different techniques and under additional conditions. In the
following, we briefly compare those results to ours.
\begin{itemize}
\item The bound \eqref{eq:sigma-min} was first established in
  \cite{batenkov2018a} in the regime $N \theta \ge c_1 $.  However, it
  was also required that the entire node set $\xv$ be contained in an
  interval of length ${1\over{s^2}}$. Compared with \cite{batenkov2018a} we similarly 
  require that $N \theta \ge c_2 $, but the node set $\xv$ is no longer restricted to such a tiny
  interval.
\item In \cite{kunis2019a} (building upon \cite{li_stable_2017}),
  \eqref{eq:sigma-min} was shown to hold with $c'=1$ but under further
  restriction of the form
  \begin{equation}\label{eq:kunis-condition}
    N\theta > c_3(\gamma) (N\sep)^{-\gamma},
  \end{equation}
  where $\gamma>0$ can be arbitrarily small. However in this case
  $\lim_{\gamma\to 0}c_3(\gamma) = \infty$ and also
  $\lim_{\gamma\to 0}c(\gamma)=0$ where $c$ is the constant in
  \eqref{eq:sigma-min}. To make a comparison, let us fix
  $\theta,\gamma$ and consider what values of $\sep$ are covered,
  first by our result: $\sep \in \left(0,c'N^{-1}\right]$ and then by
  \cite{li_stable_2017,kunis2019a}:
  $\sep \in [c''N^{-(1+\gamma^{-1})}, N^{-1}]$.  Note that:
  \begin{itemize}
  \item The regime $\sep \in \left(0,c' N^{-1}\right]$ allows $\sep\to 0$ for a
    fixed $N$;
  \item If $N$ is sufficiently large then the regimes overlap, and all
    values of $\sep\in\left(0,N^{-1}\right]$ are either covered by the
    results of this paper or those of
    \cite{li_stable_2017,kunis2019a}.
  \end{itemize}
\item Our constant $c$ in \eqref{eq:sigma-min} is not explicit,
  while the authors of \cite{kunis2019a} managed to prove that under
  the condition \eqref{eq:kunis-condition} with
  $\gamma = \frac{\ell-1}{2}$, the constant $c(\gamma)$ is of order
  $C^{-\ell}$, for an absolute constant $C$ (in \cite{batenkov2018a} a
  much worse estimate $c \sim s^{-2s}$ was given). The scaling
  $c \sim C^{-\ell}$ can be shown to be optimal (up to the magnitude
  of the absolute constant $C$), see \cite[Example 5.1]{kunis2019a}.
  Simulations suggest that \eqref{eq:sigma-min} holds with
  $c\sim C^{-\ell}$ whenever $N \theta \ge c_4$, i.e. the clusters
  separation should only be large with respect to $\frac{1}{N}$,
  regardless of the relation between $N$ and $\sep$.  We plan to close
  this gap in the constant in a future publication.
\end{itemize}

In addition, our results have consequences for the analysis of
super-resolution problem and algorithms, both on-grid and off-grid
\cite{batenkov2019a,batenkov2018a,donoho_superresolution_1992,li_stable_2017,li2019}. In
this context, it should also be interesting to investigate low-rank
approximation for the covariance matrices
\cite{beckermann2019,yang2018}.

We hope that using the cluster subspace orthogonality it will be
possible to provide an accurate description of the \emph{singular
  vectors}, in particular, their spectral concentration
properties. These questions are important in e.g. time-frequency
analysis and sampling of multiband signals \cite{hogan_duration_2011}.

\subsection{Organization of the paper}
In Section \ref{sec:main-results} we establish some notation and
formulate our main results. In Section \ref{sec:ortho} we develop the
necessary tools and prove Theorem \ref{thm.orth.spaces}. In Section
\ref{sec:single.cluster} we analyze the case of a single cluster and
prove Theorem \ref{thm:single.cl}. In Section
\ref{sec:multi-cluster-proofs} we analyze the multi-cluster setting
and prove Theorems \ref{thm.union} and \ref{thm:ls-accuracy}. In
Section \ref{sec:numerics} we present results of numerical experiments
validating our main results.
\subsection{Acknowledgements}
 The research of GG and YY is supported in part by the Minerva Foundation.

\section{Main results}\label{sec:main-results}

\subsection{Notation}

For a matrix $\AAA$, $\AAA^H$ denotes the Hermitian transpose of
$\AAA$, and $\AAA^{\dagger}$ denotes the Moore-Penrose pseudoinverse
\cite{ben-israel2003} of $\AAA$. The $k$-th component of a vector
$\xvec$ is denoted by $\left(\xvec\right)_k$, and $(i,j)$-th entry of
a matrix $\AAA$ is denoted by
$\left(\AAA\right)_{i,j}$. We denote 
the spectral, the maximum and the Frobenius norm of $\AAA$, respectively, by:
$\|\AAA\|=\max_{\|\av\|=1} \|\AAA\av\|$, $\|\AAA\|_{\max}=\max_{j,k} |(\AAA)_{j,k}|$ and
$\|\AAA\|_{F}=\left( \sum_{j,k} |(\AAA)_{j,k}|^2 \right)^\frac12$.

The following relations are standard and we use them frequently:
\begin{equation}\label{eq.matnorm}
	\|\AAA\|_{\max} \leq \|\AAA\| \leq \|\AAA\|_F \leq \sqrt{r} \|\AAA\|, \quad \text{ if $\AAA$ is of rank $r$}.
\end{equation}

For any $\AAA$ as above, we will refer to its singular values in a decreasing order 
and list them as 
$$\sigma_{\max}(\AAA)=\sigma_1(\AAA)\ge\ldots\ge \sigma_{\min(m,n)}(\AAA)=\sigma_{\min}(\AAA).$$ 

We use the Landau symbols $\MO$ for an asymptotic upper bound and $\Theta$ for asymptotically equal up to constants. 

\smallskip

Now we define the clustering configuration of the nodes.
\begin{definition}[Wrap-around distance]
	For $x,y \in \RR$, we denote the wrap-around distance 
	$$\Delta(x,y)= |\Arg \exp{\imath (x-y)}| = |x-y \mod (-\pi,\pi]| \in \left[0,\pi\right],$$ 
	where for $z\in {\mathbb C} \backslash \{0\}$, $\Arg(z)$
	is the principal value of the argument of $z$, taking values in $\left(-\pi,\pi\right]$.
\end{definition}

\begin{definition}[Single cluster configuration]\label{def.sigle.cluster}
  The node set $\xv=\left\{ x_1,\dots,x_s\right\} \subset (-\pi,\pi]$
  is said to form
  \begin{itemize}
  \item an $(h,s)$-cluster if
    $$
     \forall x,y \in \xv, x \neq y: \quad 0 < \Delta(x,y) \le h;
    $$
  \item an $(h,\tau,s)$-cluster, for some $\tau>0$, if
    $$
    \forall x,y \in \xv, x\neq y: \quad \tau h \le \Delta(x,y) \le h.
    $$
  \end{itemize}
\end{definition}

\begin{remark}
  Clearly, an $(h,\tau,s)$ cluster is in particular an
  $(h,s)$-cluster, where in addition we assume that the nodes
  are approximately uniformly distributed within the cluster.
\end{remark}

\begin{definition}[Multi-cluster configuration]\label{def.partial.cluster}
  The node set $\xv=\left\{ x_1,\dots,x_s\right\} \subset (-\pi,\pi]$
  is said to form an $((h^{(j)}, s^{(j)})_{j=1}^M, \theta)$
  (respectively,
  $((h^{(j)}, \tau^{(j)}, s^{(j)})_{j=1}^M, \theta)$)-clustered
  configuration if there exists an $M$-partition
  $\xv=\biguplus_{j=1}^M \cj$, such that for each
  $j\in\left\{1,\dots,M\right\}$ the following conditions are
  satisfied:
  \begin{itemize}
  \item $\cj$ is an $(h^{(j)}, s^{(j)})$ (respectively, an
    $(h^{(j)}, \tau^{(j)}, s^{(j)})$)-cluster;
  \item $ \Delta(x,y) \geq \theta > 0,\quad \forall x\in\cj,\;\forall y \in \xv\setminus\cj$.
  \end{itemize}
\end{definition}

Below we write $C_k(s_1,s_2,\ldots,s_j)$ or $N_k(s_1,s_2,...,s_j)$,
for some indexes $k,j$ and parameters $s_1,\ldots,s_j$, to
indicate a constant that depends only on $s_1,\ldots,s_j$.

\subsection{Cluster subspace orthogonality}

\begin{definition}[Cluster subspace]\label{def.l.x}
  Let $\xv=\left\{x_1,\dots,x_s\right\} \subset (-\pi,\pi]$ and let
  $\vv_1,\ldots,\vv_s$ denote the columns of the Vandermonde matrix
  $\VV_N(\xv)$.  We denote by $\Lx$ the subspace spanned by
  $\vv_1,\ldots,\vv_s$, i.e.
	$$\Lx:= L(\xv,N) = \Span \{ \vv_1,\ldots,\vv_s\} \subset \CC^{N+1}.$$  
\end{definition}

\begin{definition}[Minimal principal angle]\label{def.subspace.angles}
  For two subspaces $L_1,L_2 \subset \CC^{N+1}$, the minimal principal
  angle $\angle_{\min}(L_1, L_2)$ between $L_1$ and $L_2$, taking
  values in $[0,\frac{\pi}{2}]$, is defined as
  $$\angle_{\min}(L_1, L_2):=\min_{v\in L_1\backslash\{0\},u\in
    L_2\backslash\{0\}} \arccos \left(\frac{|\langle
      v,u\rangle|}{\|v\|\cdot \|u\|}\right).$$
\end{definition}

Our first main result, proved in Section \ref{sec:ortho}, reads as follows.

\begin{theorem}[Cluster subspaces orthogonality]\label{thm.orth.spaces}
  Let $\xv$ and $\yv$ form an $(h^{(1)},s_1)$- and
  $(h^{(2)},s_2)$-clusters, respectively, such that
	\begin{align*}\label{eq.cluster.sep}
          \Delta(x,y) & \ge \theta	& \forall x\in\xv, y\in\yv.	
	\end{align*}
	Put $h=\max\left(h^{(1)},h^{(2)}\right)$. Then there exist
        positive constants $\Cl{const.low}$, $\Cl{const.high}$,
        $\Cl{subsapce.angle.srf}$ and $\Cl{subsapce.angle.N}$,
        depending only on $s_1$ and $s_2$, such that for all $N$ with
        $\Cr{const.low} \le N \le \frac{\Cr{const.high}}{h}$ we have
	\begin{equation}\label{eq.thm.angle}
		\angle_{\min}(L(\xv,N),L(\yv,N))
		\ge \frac{\pi}{2} - \frac{\Cr{subsapce.angle.N}}{N\theta} - \Cr{subsapce.angle.srf}Nh.
	\end{equation}
\end{theorem}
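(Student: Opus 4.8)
The plan is to bound the minimal principal angle by controlling the quantity $\max_{v \in L(\xv,N), u \in L(\yv,N)} \frac{|\langle v, u\rangle|}{\|v\|\|u\|}$, i.e. the cosine of the angle, and show it is at most $\frac{c_1}{N\theta} + c_2 Nh$. Writing $v = \VV_N(\xv)\av$ and $u = \VV_N(\yv)\bv$, we have $\langle v,u\rangle = \bv^H \VV_N(\yv)^H \VV_N(\xv) \av$, so the key object is the cross-Gram matrix $\GG := \VV_N(\yv)^H\VV_N(\xv)$, whose $(k,j)$ entry is the Dirichlet-type kernel $D_N(y_k - x_j) = \sum_{n=0}^{N} e^{\imath n (x_j - y_k)}$. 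Since $\Delta(x_j, y_k) \ge \theta$ for all pairs, each entry satisfies $|D_N(x_j-y_k)| \le \min\left(N+1, \frac{1}{|\sin((x_j-y_k)/2)|}\right) \lesssim \frac{1}{\theta}$, so $\|\GG\|_{\max} \lesssim \frac{1}{\theta}$ and hence $\|\GG\| \le \sqrt{s_1 s_2}\,\|\GG\|_{\max} \lesssim \frac{1}{\theta}$ by \eqref{eq.matnorm}. On the other hand, because each cluster is tightly packed with $Nh \lesssim 1$, the Gram matrices $\VV_N(\xv)^H\VV_N(\xv)$ and $\VV_N(\yv)^H\VV_N(\yv)$ are close to rank-deficient, and I expect their smallest singular values to be large only along a one-dimensional direction — heuristically $\|v\| \gtrsim \sqrt{N}\,\|\av\|_{\text{eff}}$ where the effective norm on $\av$ must be taken relative to the near-null structure of the cluster. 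So the naive estimate $\cos\angle_{\min} \le \|\GG\| / \left(\sigma_{\min}(\VV_N(\xv)) \sigma_{\min}(\VV_N(\yv))\right)$ is useless because $\sigma_{\min}$ degenerates like $(Nh)^{s-1}$.

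The resolution, and the technical heart of the argument, is to \emph{not} use the worst singular value but to exploit the structure of the near-null space. Here is the strategy I would pursue. For a single cluster $\xv$ of size $h$ centered at $x_0$, after factoring out the unimodular diagonal $\diag(e^{\imath n x_0})$ (which doesn't change norms or angles) we may assume the cluster is centered near $0$. Then the columns $\vv_j$ all lie near the vector $\vec{1} = (1,1,\dots,1)^T$, and more precisely a discrete Taylor expansion in the small parameters $N(x_j - x_0)$ shows that the columns span a subspace that is $\MO(Nh)$-close to the $s$-dimensional space $\mathcal{P}_{s-1}$ of sampled polynomials of degree $<s$ on the grid $\{n/N\}_{n=0}^N$ (i.e., spanned by $(n^0), (n^1), \dots, (n^{s-1})$ suitably normalized). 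Crucially, \emph{any} unit vector $v$ in $L(\xv,N)$ is within $\MO(Nh)$ of $\mathcal{P}_{s-1}$. Now the point is that $\mathcal{P}_{s-1}$ for cluster $\xv$ and $\mathcal{Q}_{s_2-1}$ for cluster $\yv$ are themselves nearly orthogonal: the inner product of a degree-$<s_1$ sampled polynomial associated to $x_0$ with a degree-$<s_2$ sampled polynomial associated to $y_0$, after the respective phase modulations $e^{\imath n x_0}$ and $e^{\imath n y_0}$ are reinstated, is governed by derivatives of the Dirichlet kernel $D_N^{(p)}(x_0 - y_0)$, each of which is $\MO\!\left(\frac{N^p}{\theta}\right)$ in magnitude while the diagonal normalization gives factors $\sim N^{p}$ — so the normalized cross inner products are $\MO\!\left(\frac{1}{N\theta}\right)$. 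Combining: for unit $v \in L(\xv,N)$, $u \in L(\yv,N)$, write $v = \tilde v + r_v$, $u = \tilde u + r_u$ with $\tilde v \in \mathcal{P}_{s_1-1}$, $\tilde u \in \mathcal{Q}_{s_2-1}$, $\|r_v\|, \|r_u\| = \MO(Nh)$; then $|\langle v,u\rangle| \le |\langle \tilde v, \tilde u\rangle| + \|r_v\| + \|r_u\| + \|r_v\|\|r_u\| \lesssim \frac{1}{N\theta} + Nh$, and using $\arccos(1-t) \ge \text{something like } \sqrt{2t}$... — actually since we want a clean linear bound, note $\cos\angle_{\min} \le \frac{c_1}{N\theta} + c_2 Nh =: \varepsilon$, so $\angle_{\min} \ge \arccos \varepsilon \ge \frac{\pi}{2} - \varepsilon$ using $\arccos t \ge \frac{\pi}{2} - t$ for $t \in [0,1]$, which is exactly \eqref{eq.thm.angle} after relabeling constants.

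The main obstacle, which I would want to handle carefully, is making the approximation "$L(\xv,N)$ lies within $\MO(Nh)$ of the polynomial subspace $\mathcal{P}_{s-1}$, \emph{uniformly over all unit vectors in the span}" fully rigorous. This is not just a statement about each column $\vv_j$ individually — it requires that the $s\times s$ change-of-basis between $\{\vv_j\}$ and a well-conditioned basis of $\mathcal{P}_{s-1}$ (e.g. divided differences, Chebyshev-like, or an orthonormalized monomial basis) is itself well-conditioned, so that a unit vector with badly-behaved coefficients in the $\vv_j$ basis still has controlled distance to $\mathcal{P}_{s-1}$. Equivalently one needs the quantitative statement that $\VV_N(\xv) = \PP_N(\xv) \TT(\xv)$ where $\PP_N$ has the sampled-monomial columns, $\TT(\xv)$ is a classical confluent-Vandermonde-type change of basis depending only on the normalized node positions, and the residual from truncating the Taylor expansion is bounded in operator norm by $C(s)\,Nh\,\|\PP_N^{-\text{(pseudo)}}\|\cdots$ — i.e., one must track that $\PP_N$ restricted to its column space is well conditioned after the natural diagonal scaling $\diag(1, N, N^2, \dots)$, which is where the hypothesis $N \ge \Cr{const.low}$ enters. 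I expect the lower bound $N \ge \Cr{const.low}$ and upper bound $N \le \Cr{const.high}/h$ to be exactly what is needed to keep the truncated-Taylor remainder and the polynomial-basis conditioning under control; the constants $\Cr{subsapce.angle.srf}, \Cr{subsapce.angle.N}$ then emerge from $C(s_1,s_2)$ times the conditioning constants of these two fixed (node-position-dependent but $N$-independent) transition matrices. An alternative, possibly cleaner route for the residual control is to expand the cross-Gram $\GG$ directly: write $\GG = \VV_N(\yv)^H \VV_N(\xv)$ and Taylor-expand both Vandermonde factors about their cluster centers, obtaining $\GG = \WW_2^H \DD_2 \, \big(\text{kernel-derivative matrix}\big) \, \DD_1 \WW_1 + \text{remainder}$, where the kernel-derivative matrix has entries $\frac{D_N^{(p+q)}(x_0-y_0)}{p!q!}$ scaled by $N^{-(p+q)}$ and is thus $\MO(1/(N\theta))$ in norm while the remainder is $\MO(Nh)$; bounding $\|v\|, \|u\|$ from below on the respective spans then reduces to a single-cluster statement (which is precisely the kind of estimate proved in Section~\ref{sec:single.cluster}, and may be cited). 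I would choose whichever of these two bookkeeping schemes leads to the least painful constant-chasing, but the conceptual content — near-orthogonality of cluster subspaces reduces to near-orthogonality of low-degree polynomial subspaces modulated by well-separated frequencies — is the same either way.
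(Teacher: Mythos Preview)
Your proposal is essentially the paper's own proof: the ``polynomial subspace $\mathcal{P}_{s-1}$'' is precisely the paper's limit space spanned by $\uv_j = \bigl((\imath k)^{j-1}e^{\imath k\zeta}\bigr)_{k=0}^N$, the well-conditioned change of basis you are looking for is the divided-difference basis (with its conditioning controlled by the smallest eigenvalue of the normalized Hilbert matrix, Proposition~\ref{prop.sing.limit}), and the $\MO(1/(N\theta))$ near-orthogonality of the two limit spaces is obtained from the cancellation estimate $\bigl|\sum_{k=0}^N k^m z^k\bigr| \le 2N^m/|1-z|$ (Lemma~\ref{lem:norm2}), which is exactly your ``Dirichlet-kernel-derivative'' bound. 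One minor slip: the inequality $\arccos t \ge \tfrac{\pi}{2} - t$ on $[0,1]$ goes the wrong way (since $\arcsin t \ge t$ there); the paper instead uses $\arccos t \ge \tfrac{\pi}{2} - \tfrac{\pi}{2}t$, which only changes the constants.
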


\begin{remark}
  Note that Theorem \ref{thm.orth.spaces} holds irrespective of the
  inner structure of each cluster.
\end{remark}

\begin{remark}\label{rem:nontrivial-bound}
  Clearly, if
  $N>\max\left(\Cr{const.low},{4\over\pi}\Cr{subsapce.angle.N}\right)
  \cdot \max\left(1,\theta^{-1}\right)$ and
  $Nh<\min\left(\Cr{const.high},{\pi\over
      {4\Cr{subsapce.angle.srf}}}\right)$, then $\angle_{\min}$ is
  guaranteed to be positive. So, Theorem \ref{thm.orth.spaces} will
  always produce a nontrivial bound for sufficiently large $N$ and
  sufficiently small $Nh$.
\end{remark}

\begin{remark}
  All the constants in Theorem \ref{thm.orth.spaces} (except
  $\Cr{const.low}$) can be given explicitly. However, we feel that
  little is to be gained by doing so, as these constants are relatively
  complicated and we have not tried to optimize them. For instance,
  they depend on the smallest eigenvalue of the normalized Hilbert
  matrix, a quantity which has no known non-asymptotic closed formula
  (see Remark \ref{remark.hilbert.asymptotic}). That said, note that
  the asymptotic behavior is captured accurately, as our numerical
  experiments in Section \ref{sec:numerics} demonstrate.

\end{remark}

\subsection{Full spectral description}

Now we establish {\it accurate estimates for all the
  singular values of $\VV$}.

First, using the orthogonality result (Theorem \ref{thm.orth.spaces}),
we show in Section \ref{sec:proof.orth} that the set of all the
singular values of $\VV$ equals, up to a small multiplicative
perturbation, to {\it the union of the sets of singular values of the
  sub-matrices of $\VV$, corresponding to the clusters}.

\begin{theorem}[Multi-cluster Vandermonde matrix singular values]\label{thm.union}
  Suppose that the node set
  $\xv=\left\{x_1,\dots,x_s\right\} \subset (-\pi,\pi]$ forms an
  $((h^{(j)}, s^{(j)})_{j=1}^M, \theta)$-clustered
  configuration, and consider the Vandermonde matrix $\VV_N(\xv)$ and
  its sub-matrices formed by each cluster,
  $\VV_N(\cv^{(1)}),\ldots,\VV_N(\cv^{(M)})$. Let
  $$
  \sigma_1\ge \ldots \ge \sigma_s
  $$ 
  be the singular values of $\VV_N(\xv)$ in non-increasing order.
  Further, let
  $$
  \tilde{\sigma}_1\ge\ldots\ge \tilde{\sigma_{s}}
  $$ 
  be all the singular values of the sub-matrices
  $\{\VV_N(\cv^{(j)})\}$, also in non-increasing order.
	
  Put $h=\max_{j}(h^{(j)})$. Then there exist positive
  constants $\Cl{multi.cluster.N.theta}$, $\Cl{multi.cluster.N.h}$,
  $\Cl{vandermonde.union.N}$ and $\Cl{vandermonde.union.N.h}$,
  depending only on $s^{(1)},\dots,s^{(M)}$, such that for all $N$
  satisfying
  $\frac{\Cr{multi.cluster.N.theta}}{\theta} \le N \le
  \frac{\Cr{multi.cluster.N.h}}{h}$ we have
  \begin{align}
    \left(1-\frac{\Cr{vandermonde.union.N}}{N\theta}-\Cr{vandermonde.union.N.h}Nh\right)^{\frac{1}{2}} \tilde{\sigma}_j \le \sigma_j\le 
    \left(1+\frac{\Cr{vandermonde.union.N}}{N\theta}+\Cr{vandermonde.union.N.h}Nh\right)^{\frac{1}{2}} \tilde{\sigma}_j, && j=1,\ldots,s.
  \end{align}
\end{theorem}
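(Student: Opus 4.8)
\textbf{Proof proposal for Theorem \ref{thm.union}.}

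The plan is to reduce the global singular value problem to the block-diagonal one via the near-orthogonality of cluster subspaces established in Theorem \ref{thm.orth.spaces}, using a perturbation argument on the Gram matrix $\VV_N(\xv)^H \VV_N(\xv)$. First I would fix the constants: set $\Cr{multi.cluster.N.theta}$ and $\Cr{multi.cluster.N.h}$ large/small enough that each pair of clusters satisfies the hypotheses of Theorem \ref{thm.orth.spaces} (note the multiplicities $s_1,s_2$ there are bounded by the largest $s^{(j)}$, so the constants $\Cr{subsapce.angle.N}$, $\Cr{subsapce.angle.srf}$ may be taken uniform in the pair), and so that $\angle_{\min}(L(\cv^{(j)},N),L(\cv^{(k)},N)) \geq \frac{\pi}{2} - \delta_{jk}$ with $\delta_{jk} := \frac{\Cr{subsapce.angle.N}}{N\theta} + \Cr{subsapce.angle.srf} N h$ small. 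The key observation is that for two columns $\vv \in L(\cv^{(j)})$, $\wv \in L(\cv^{(k)})$ with $j \neq k$, one has $\frac{|\langle \vv,\wv\rangle|}{\|\vv\|\|\wv\|} \leq \cos(\frac{\pi}{2}-\delta_{jk}) = \sin\delta_{jk} \leq \delta_{jk}$.

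Next I would organize $\VV_N(\xv)$ so that its columns are grouped by cluster, writing $\VV := \VV_N(\xv) = [\VV^{(1)} \mid \cdots \mid \VV^{(M)}]$ with $\VV^{(j)} := \VV_N(\cv^{(j)})$, and likewise form the block-diagonal matrix $\widetilde\VV := \diag(\VV^{(1)},\ldots,\VV^{(M)})$ acting on $\CC^{s^{(1)}} \oplus \cdots \oplus \CC^{s^{(M)}}$; note that $\{\tilde\sigma_j\}$ are exactly the singular values of $\widetilde\VV$, while $\{\sigma_j\}$ are those of $\VV$. It suffices to compare the Gram matrices $\GG := \VV^H\VV$ and $\widetilde\GG := \widetilde\VV^H\widetilde\VV = \diag((\VV^{(1)})^H\VV^{(1)},\ldots)$. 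These have the same $M$ diagonal blocks; the off-diagonal block $\GG_{jk} = (\VV^{(j)})^H\VV^{(k)}$ ($j\neq k$) is the "error". The idea is to bound the perturbation \emph{relative to} $\widetilde\GG$, i.e. to control $\|\widetilde\GG^{-1/2}(\GG - \widetilde\GG)\widetilde\GG^{-1/2}\|$ (on the range of $\widetilde\VV$); a multiplicative bound of this form, say $\leq \epsilon := \frac{\Cr{vandermonde.union.N}}{N\theta} + \Cr{vandermonde.union.N.h}Nh$, immediately gives $(1-\epsilon)\widetilde\GG \preceq \GG \preceq (1+\epsilon)\widetilde\GG$ as quadratic forms, and then by the Courant–Fischer min-max theorem $\sigma_j(\VV)^2 = \lambda_j(\GG)$ is squeezed between $(1-\epsilon)\lambda_j(\widetilde\GG)$ and $(1+\epsilon)\lambda_j(\widetilde\GG) = (1\pm\epsilon)\tilde\sigma_j^2$, which is exactly the claimed inequality after taking square roots.

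To get the relative bound, I would work column-normalized: let $\DD = \diag(\|\vv_1\|,\ldots,\|\vv_s\|)$ (note $\|\vv_i\| = \sqrt{N+1}$ exactly for nodes on the unit circle, so $\DD = \sqrt{N+1}\,\II$, which actually simplifies things — the normalization is uniform). Then each entry of $\GG_{jk}$, divided by $N+1$, is a normalized inner product of a column in cluster $j$ with one in cluster $k$, hence bounded in modulus by $\sin\delta_{jk} \leq \delta$ where $\delta := \max_{j\neq k}\delta_{jk} \lesssim \frac{1}{N\theta} + Nh$; wait — I must be careful, $\delta_{jk}$ bounds the angle between \emph{subspaces}, hence bounds $|\langle \vv,\wv\rangle|/(\|\vv\|\|\wv\|)$ for \emph{all} $\vv\in L(\cv^{(j)}), \wv\in L(\cv^{(k)})$, in particular the individual columns. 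So $\|(N+1)^{-1}(\GG-\widetilde\GG)\|_{\max} \leq \delta$, and since $\GG-\widetilde\GG$ is supported on at most $s^2$ entries (in fact $s^2 - \sum_j (s^{(j)})^2$), $\|(N+1)^{-1}(\GG - \widetilde\GG)\| \leq s\,\delta$ by \eqref{eq.matnorm}. The remaining point is to pass from this spectral bound on $\GG - \widetilde\GG$ to the \emph{relative} bound $(1\mp\epsilon)\widetilde\GG \preceq \GG$: this requires a lower bound on $\lambda_{\min}$ restricted appropriately, but cluster subspaces being near-orthogonal and each $\VV^{(j)}$ having full column rank forces $\widetilde\GG$ and $\GG$ to be invertible with comparable spectra — more directly, it is cleanest to argue \emph{additively} and then rescale: $|\lambda_j(\GG) - \lambda_j(\widetilde\GG)| \leq \|\GG - \widetilde\GG\| \leq s\delta(N+1)$ by Weyl's inequality, and then $\lambda_j(\widetilde\GG) = \tilde\sigma_j^2 \geq \sigma_{\min}(\widetilde\VV)^2$, so dividing, $|\sigma_j^2/\tilde\sigma_j^2 - 1| \leq s\delta(N+1)/\sigma_{\min}(\widetilde\VV)^2$. \textbf{The main obstacle} is precisely controlling $\sigma_{\min}(\widetilde\VV)^2 = \min_j \sigma_{\min}(\VV^{(j)})^2$ from below: this can be exponentially small in the cluster multiplicities (indeed that is the content of the single-cluster analysis, Theorem \ref{thm:single.cl}), so the additive Weyl bound with such a denominator would \emph{not} give the clean $\frac{c}{N\theta} + c'Nh$ form — the perturbation constant would inherit a factor like $(Nh)^{-2(\ell-1)}$. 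Hence the additive argument is too lossy and one genuinely needs the \emph{relative/multiplicative} perturbation bound: I would instead sandwich via $\widetilde\GG^{-1/2}\GG\widetilde\GG^{-1/2} = \II + \widetilde\GG^{-1/2}(\GG - \widetilde\GG)\widetilde\GG^{-1/2}$ and bound the operator norm of the second term using the \emph{structure} of $\GG - \widetilde\GG$ (it maps each cluster-block to the others via \emph{small-angle} projections), writing $\GG - \widetilde\GG = \widetilde\VV^H (\PP - \widetilde\PP)\widetilde\VV$-type identities with $\PP,\widetilde\PP$ orthogonal projections onto $L(\xv)$-related spaces, so that the $\widetilde\VV^H,\widetilde\VV$ factors cancel against $\widetilde\GG^{-1/2}$ and leave a bound purely in terms of $\delta$ times a combinatorial constant $C(s^{(1)},\ldots,s^{(M)})$ — independent of the ill-conditioning. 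Verifying that cancellation carefully, tracking how the pairwise $\delta_{jk}$ combine across the $M$ clusters, and confirming the combinatorial constant depends only on the multiplicities, is where the real work lies.
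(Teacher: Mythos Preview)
Your diagnosis is exactly right: the additive Weyl bound on $\GG-\widetilde\GG$ is too lossy because of the tiny $\sigma_{\min}(\VV^{(j)})$, and a multiplicative/relative perturbation is what is needed. The sandwich $\widetilde\GG^{-1/2}\GG\widetilde\GG^{-1/2}$ you propose works, and the ``cancellation'' you anticipate becomes transparent once you orthonormalize within each cluster: write $\VV^{(j)}=\QQ_j\RRR_j$ (QR or polar), so the $(j,k)$ off-diagonal block of $\GG$ is $\RRR_j^H\QQ_j^H\QQ_k\RRR_k$; after sandwiching by $\widetilde\GG^{-1/2}$ (block-diagonal with blocks $(\RRR_j^H\RRR_j)^{-1/2}$) it is unitarily equivalent to $\QQ_j^H\QQ_k$, whose operator norm is $\cos\angle_{\min}(L_j,L_k)\le\sin\delta\le\delta$ --- no trace of the ill-conditioning survives. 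Your suggested identity ``$\GG-\widetilde\GG=\widetilde\VV^H(\PP-\widetilde\PP)\widetilde\VV$'' is not the right shape, but the block-QR viewpoint replaces it cleanly.

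This is precisely the paper's route (Lemma~\ref{lem.sing.orth.spaces}): rather than at the Gram-matrix level, they factor $\VV_N(\xv)=\QQ\RRR$ with $\QQ=[\QQ_1,\ldots,\QQ_M]$ and $\RRR=\diag(\RRR_1,\ldots,\RRR_M)$, observe $\QQ^H\QQ=\II_s+\EE$ with every off-diagonal entry of $\EE$ bounded by $\alpha$ (hence $\|\EE\|\le s\alpha$ and $\sqrt{1-s\alpha}\le\sigma_{\min}(\QQ)\le\sigma_{\max}(\QQ)\le\sqrt{1+s\alpha}$), and then apply the multiplicative singular-value inequality $\sigma_{\min}(\QQ)\,\sigma_j(\RRR)\le\sigma_j(\QQ\RRR)\le\sigma_{\max}(\QQ)\,\sigma_j(\RRR)$ together with $\sigma_j(\RRR)=\tilde\sigma_j$. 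So your approach and the paper's are the same argument in two guises; the explicit block-QR factorization is simply the clean realization of what you flagged as ``where the real work lies''.
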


\begin{remark}
  In the proof of Theorem \ref{thm.union} it is ensured that
  $1-\frac{\Cr{vandermonde.union.N}}{N\theta}-\Cr{vandermonde.union.N.h}Nh>
  0$, see Proposition \ref{prop:mult.clust.angl} and in particular
  \eqref{eq.angle.j.k.spaces}. Compare this with Remark
  \ref{rem:nontrivial-bound}.
  
  The dependency of the constants on $s$ is only linear. See Remark \ref{remark.const.maxcluster}.
\end{remark}

Thus, the analysis of the spectrum of $\VV_N$ is reduced to looking at
each cluster separately. To that effect, our next result (proved in
Section \ref{sec:single.cluster}) provides the decay rates for the
singular values corresponding to a single cluster, assuming that the
distribution of the nodes inside the cluster is approximately uniform.

\begin{theorem}[Single cluster singular values]\label{thm:single.cl}
  Let $\xv$ form an $\left(h,\tau,s\right)$-cluster. Then there exist constants
  $\Cl{single.cluster.Nh}(\tau,s)$, $\Cl{sing.lower.1}(\tau,s)$
  and $\Cl{sing.upper.1}(s)$, such that for all
  $N\geq s$ and $Nh \leq \Cr{single.cluster.Nh}$ we
  have
\begin{equation}
  \label{eq:sing.single.final}
  \Cr{sing.lower.1} N^{1\over 2} \left(Nh\right)^{j-1} \leq \sigma_j\left(\VV_N\left(\xv\right)\right) \leq \Cr{sing.upper.1} N^{1\over 2}\left(Nh\right)^{j-1},\quad j=1,\dots,s. 
\end{equation}
\end{theorem}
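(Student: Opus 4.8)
The plan is to analyze the Gram matrix $\GG := \VV_N(\xv)^H \VV_N(\xv) \in \CC^{s\times s}$, whose eigenvalues are the squares of the singular values of $\VV_N(\xv)$, and to extract its spectrum via a Taylor expansion of the entries in the small parameter $Nh$. Writing $x_j = x_0 + h\delta_j$ with the $\delta_j$ bounded (and pairwise separated by at least $\tau$ by the $(h,\tau,s)$-cluster assumption), the $(j,k)$ entry of $\GG$ is the Dirichlet-type sum $\sum_{n=0}^{N} e^{\imath n(x_k - x_j)}$. Expanding $e^{\imath n h(\delta_k - \delta_j)}$ in powers of $nh$ and summing the resulting power sums $\sum_{n=0}^N n^p = \Theta(N^{p+1})$, one finds that $\GG \approx N \cdot \DD \HilbM' \DD^H$ up to lower-order terms, where $\DD$ is a diagonal scaling by powers of $Nh$ and $\HilbM'$ is (a generalized) Vandermonde/Cauchy-type matrix in the nodes $\delta_j$, essentially a normalized Hilbert-like matrix whose smallest eigenvalue is bounded below by a constant depending only on $\tau$ and $s$ (this is the quantity alluded to in Remark \ref{remark.hilbert.asymptotic}). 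This is precisely the technique from \cite{wathen2015} referenced in the introduction.

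Concretely, I would factor $\GG = N\, \EE^H \EE + (\text{error})$, where $\EE$ is the $(s)\times s$ matrix with rows indexed by Taylor order $p=0,\dots,s-1$ and $(\EE)_{p,j} = \frac{1}{p!}(\imath h \delta_j)^p \cdot c_p$ with $c_p = \Theta(N^p/N^{1/2}\cdot N^{1/2})$ absorbing the power-sum asymptotics — more cleanly, write $\EE = \DD_1 \WW \DD_2$ with $\DD_1 = \diag(1, Nh, \dots, (Nh)^{s-1})$ (up to harmless constants), $\WW$ the confluent/generalized Vandermonde matrix $((\imath\delta_j)^p/p!)_{p,j}$, and $\DD_2$ a bounded diagonal. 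The key linear-algebra step is then: the ordered eigenvalues of $\DD_1 \MS \DD_1$, for $\MS = \WW\DD_2^2\WW^H$ a fixed positive-definite $s\times s$ matrix with condition number bounded in terms of $\tau,s$, satisfy $\lambda_j(\DD_1\MS\DD_1) = \Theta\big((Nh)^{2(j-1)}\big)$ with constants depending only on $\lambda_{\min}(\MS),\lambda_{\max}(\MS)$ — this is a standard fact about diagonal congruence by a matrix with well-separated diagonal entries (one proves it by a min-max / Courant–Fischer argument comparing with the diagonal part). Multiplying through by $N$ and taking square roots yields \eqref{eq:sing.single.final}, with $\Cr{sing.lower.1}, \Cr{sing.upper.1}$ coming from $\lambda_{\min}(\MS)^{1/2}$ and $\lambda_{\max}(\MS)^{1/2}$ respectively.

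The main obstacle is controlling the Taylor remainder uniformly: one must show that truncating the exponential series at order $s-1$ (equivalently, that $\GG$ is within a small \emph{relative} perturbation of $N\DD_1\MS\DD_1$) introduces an error that is dominated by the \emph{smallest} relevant scale $N(Nh)^{2(s-1)}$, not merely by the largest. Since the tail terms carry factors $(nh)^p$ with $p\ge s$, summed against $\sum n^p \sim N^{p+1}$, the tail contributes on the order $N(Nh)^{s}$ entrywise, which is a factor $Nh$ smaller than the target smallest eigenvalue scale $N(Nh)^{2(s-1)}$ only when $2(s-1) \le s$, i.e. it is \emph{not} automatically negligible for $s\ge 3$. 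The resolution is to be more careful: the rank-$p$ Taylor blocks beyond order $s-1$ must be absorbed into the already-present lower-order blocks via the structure of $\WW$ (the higher power sums $\sum n^p$ for $p\ge s$ are not independent directions), or equivalently one applies a Weyl-type perturbation bound after conjugating by $\DD_1^{-1}$ on the \emph{right scale}, showing $\|\DD_1^{-1}(\GG/N - \DD_1\MS\DD_1)\DD_1^{-1}\| = \MO(Nh)$ so that each eigenvalue is perturbed by a relative factor $1+\MO(Nh)$. Making this rigorous — i.e. choosing the threshold $\Cr{single.cluster.Nh}$ small enough that all remainder terms are controlled simultaneously across all $s$ scales — is the technical heart of the argument; everything else is bookkeeping with power sums and the elementary spectral theory of diagonal congruences.
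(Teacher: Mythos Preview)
Your leading-order analysis is essentially correct and matches the paper in spirit: both expand the Gram matrix of $\VV_N(\xv)$ via the Dirichlet kernel, and both end up comparing with a Hilbert-type matrix. The paper symmetrizes first (so only even powers of the distance matrix $\DD=[y_i-y_j]$ appear) and then, crucially, does \emph{not} try to prove a relative perturbation bound. Instead it invokes Micchelli's lemma: on the nested subspaces $\ker\PP_{m-1}$ (kernels of the rectangular Vandermonde matrices $\PP_m=[y_j^k]_{k\le m}$) the quadratic form $(-1)^m\av^H\DD^{2m}\av$ is nonnegative, vanishing exactly on $\ker\PP_m$. The upper bound $\lambda_{m+1}\lesssim (Nh)^{2m}$ then drops out of Courant--Fischer by testing against $\ker\PP_{m-1}$; the lower bound comes from testing against $\bigoplus_{k\le m}\MS_k$ where $\MS_k$ is the one-dimensional complement of $\ker\PP_k$ in $\ker\PP_{k-1}$, using the sign information to show the order-$2m$ term dominates the tail for small $\varepsilon=Nh/2$.

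The gap in your proposal is exactly where you flag it. The relative-perturbation fix you sketch, namely bounding $\|\DD_1^{-1}(\GG/N-\DD_1\MS\DD_1)\DD_1^{-1}\|$, does not typecheck: $\GG/N$ is indexed by nodes while $\DD_1\MS\DD_1$ (with $\MS=\WW\DD_2^2\WW^H$) is indexed by Taylor order --- these matrices share eigenvalues but are not close entrywise, so subtracting them and rescaling by $\DD_1^{-1}$ is not meaningful. The correct rescaling would have to involve $\WW^{-1}$ as well, and once you write it out you are asking whether the $(p,q)$ block of $\WW^{-H}(\text{error})\WW^{-1}$ is $O((Nh)^{p+q-1})$; the cross terms from the Taylor tail (mixing orders $<s$ with orders $\geq s$ via the power-sum moment matrix $\PP^H\PP$) do not obviously respect this graded structure. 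Your remark that ``the higher power sums are not independent directions'' is true but does not by itself yield the estimate. The paper's route through conditional positive definiteness is precisely what substitutes for this missing structural control: it gives, on each test subspace, a one-sided inequality for the leading term that the tail cannot overturn, so no relative bound at the finest scale $(Nh)^{2(s-1)}$ is ever needed.
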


Combining Theorems \ref{thm.union} and \ref{thm:single.cl} provides
complete scaling of all the singular values of $\VV_N(\xv)$ for the
case of approximately uniform clusters (i.e.
$\min_j\tau^{(j)}\geq \tau > 0$).  If we assume, in addition, that all
the cluster sizes $h^{(j)}$ are of the same order (for simplicity we
may take them to be equal to each other), then we have a particularly
simple description of the spectrum of $\VV_N$ as follows.
\begin{corollary}[Entire spectrum]\label{cor:full-sing-vals}
  Let $\xv$ form an
  $((h^{(j)}, \tau^{(j)},s^{(j)})_{j=1}^M, \theta)$-clustered
  configuration, and furthermore suppose that
  $h^{(1)}=h^{(2)}=\dots=h^{(M)}=h$. For each
  $j=1,2,\dots,\max_{j} s^{(j)}$, define
  $$
  \ell_j := \#\{1\leq k \leq M: j \leq s^{(k)}\}.
  $$
  Then, there exist constants $\Cl{full.N.theta}$ and $\Cl{full.N.h}$
  such that for all $N\theta \geq \Cr{full.N.theta}$ and
  $Nh \leq \Cr{full.N.h}$ there are precisely $\ell_j$ singular values
  of $\VV_N(\xv)$ scaling like $\asymp N^{1\over
    2}\left(Nh\right)^{j-1}$. All the constants in the statement
  depend only on $\left(s^{(j)},\tau^{(j)}\right)_{j=1}^M$.
\end{corollary}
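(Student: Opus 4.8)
The plan is to combine the multi-cluster reduction of Theorem~\ref{thm.union} with the single-cluster asymptotics of Theorem~\ref{thm:single.cl}. First I would invoke Theorem~\ref{thm:single.cl} separately for each cluster $\cv^{(k)}$, which forms an $(h,\tau^{(k)},s^{(k)})$-cluster (recall $h^{(k)}=h$ for all $k$ by hypothesis): for $N \ge \max_k s^{(k)}$ and $Nh \le \min_k \Cr{single.cluster.Nh}(\tau^{(k)},s^{(k)})$, the singular values $\sigma_1(\VV_N(\cv^{(k)})) \ge \dots \ge \sigma_{s^{(k)}}(\VV_N(\cv^{(k)}))$ satisfy
\begin{equation*}
  \Cr{sing.lower.1}(\tau^{(k)},s^{(k)})\, N^{1/2} (Nh)^{j-1} \le \sigma_j(\VV_N(\cv^{(k)})) \le \Cr{sing.upper.1}(s^{(k)})\, N^{1/2} (Nh)^{j-1},\qquad j=1,\dots,s^{(k)}.
\end{equation*}
Thus within each cluster the $j$-th largest singular value lives on the scale $N^{1/2}(Nh)^{j-1}$, and the bounding constants depend only on $(\tau^{(k)},s^{(k)})$.

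Next I would assemble the multiset $\tilde\sigma_1 \ge \dots \ge \tilde\sigma_s$ of all singular values of all the sub-matrices $\VV_N(\cv^{(k)})$. Each cluster $\cv^{(k)}$ contributes exactly one value on each scale $(Nh)^0,\dots,(Nh)^{s^{(k)}-1}$, so the number of sub-matrix singular values on scale $N^{1/2}(Nh)^{j-1}$ is precisely $\#\{k : s^{(k)} \ge j\} = \ell_j$. Since $Nh$ is bounded above by a small constant, the scales $(Nh)^0, (Nh)^1, (Nh)^2, \dots$ are separated by a definite factor once $Nh$ is small enough relative to the ratio of the universal constants $\Cr{sing.upper.1}/\Cr{sing.lower.1}$ across all clusters; shrinking $\Cr{full.N.h}$ if necessary, this guarantees that in the sorted list $\tilde\sigma_1 \ge \dots \ge \tilde\sigma_s$, the entries in positions $\ell_1 + \ell_2 + \dots + \ell_{j-1} + 1$ through $\ell_1 + \dots + \ell_j$ are exactly the ones on scale $N^{1/2}(Nh)^{j-1}$ — i.e. the blocks do not interleave. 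Hence $\tilde\sigma_i \asymp N^{1/2}(Nh)^{j-1}$ for every $i$ in the $j$-th block, with constants depending only on $(s^{(k)},\tau^{(k)})_{k=1}^M$.

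Finally I would apply Theorem~\ref{thm.union}: choosing $\Cr{full.N.theta} \ge \Cr{multi.cluster.N.theta}$ and $\Cr{full.N.h} \le \Cr{multi.cluster.N.h}$ (and also $\le$ the single-cluster thresholds above), for $N\theta \ge \Cr{full.N.theta}$ and $Nh \le \Cr{full.N.h}$ we get
\begin{equation*}
  \Bigl(1 - \tfrac{\Cr{vandermonde.union.N}}{N\theta} - \Cr{vandermonde.union.N.h}Nh\Bigr)^{1/2}\tilde\sigma_j \le \sigma_j(\VV_N(\xv)) \le \Bigl(1 + \tfrac{\Cr{vandermonde.union.N}}{N\theta} + \Cr{vandermonde.union.N.h}Nh\Bigr)^{1/2}\tilde\sigma_j.
\end{equation*}
After possibly enlarging $\Cr{full.N.theta}$ and shrinking $\Cr{full.N.h}$ so that the perturbation factor lies in $[\tfrac12, \tfrac32]$, the scaling $\sigma_i(\VV_N(\xv)) \asymp \tilde\sigma_i$ is preserved, and the count of singular values on scale $N^{1/2}(Nh)^{j-1}$ remains exactly $\ell_j$. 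The main obstacle is the non-interleaving step: one must verify that the perturbations in Theorem~\ref{thm.union} together with the spread of the universal constants $\Cr{sing.lower.1},\Cr{sing.upper.1}$ over the finitely many clusters are dominated by a single power of $Nh$, so that the consecutive scales stay strictly ordered; this is a matter of choosing $\Cr{full.N.h}$ small enough in terms of all the finitely many cluster constants, so it is routine but must be done carefully to keep all constants depending only on $(s^{(k)},\tau^{(k)})_{k=1}^M$.
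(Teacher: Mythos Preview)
Your proposal is correct and follows precisely the route the paper intends: the corollary is stated immediately after Theorems~\ref{thm.union} and~\ref{thm:single.cl} as a direct combination of the two, with no separate proof given beyond the remark that combining them yields the full scaling. Your write-up is in fact more detailed than the paper's own treatment, and the care you take over the non-interleaving of scales (shrinking $\Cr{full.N.h}$ so that consecutive powers of $Nh$ are separated by more than the ratio of the finitely many cluster constants) is exactly the routine bookkeeping the paper leaves implicit.
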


\subsection{Accuracy of least squares problems}

\begin{definition}[Cluster index set]\label{def.cluster.index}
  For a multi-cluster $\xv=\left\{x_1,\dots,x_s\right\}$, for each
  $1\leq j \leq M$, we define $C_j=C_j\left(\xv\right)$ to be the
  indices of all the nodes in cluster $j$, i.e.
  $$
  C_j\left(\xv\right) = \left\{\ell: x_{\ell} \in \cv^{(j)}\right\}.
  $$
\end{definition}

\begin{definition}[Least squares solution]\label{def.ls.solution}
  Given a multi-cluster $\xv$ as in Definition
  \ref{def.partial.cluster}, $N > s$, and a vector $\bv\in\CC^{N+1}$,
  define
  $$
  \av\left(\xv,\bv\right):=\arg\min_{\av}\|\VV_N(\xv)\av - \bv\|_2.
  $$
\end{definition}

In Section \ref{sec:proof.ls} we prove the following result.

\begin{theorem}\label{thm:ls-accuracy}
  Suppose that the node set
  $\xv=\left\{x_1,\dots,x_s\right\} \subset (-\pi,\pi]$ forms an
  $((h^{(j)}, \tau^{(j)},s^{(j)})_{j=1}^M,\allowbreak \theta)$-clustered
  configuration, with $h=\max_{1\leq j\leq M}(h^{(j)})$. Let $N$
  satisfy
  $\frac{\Cl{ls.N.theta}}{\theta} \leq N \leq \frac{\Cl{ls.N.h}}{h}$
  for certain constants $\Cr{ls.N.theta}(s^{(1)},\dots,s^{(M)})$ and
  $\Cr{ls.N.h}(s^{(1)},\dots,s^{(M)})$ to be specified in the
  proof. Next, let $\av_0\in \CC^s$ be arbitrary and
  $\bv_0 = \VV_N(\xv)\av_0$. Then for each
  $j\in\left\{1,2,\dots,M\right\}$ there exists a constant
  $\Cl{ls.ub}(s^{(j)},\tau^{(j)})$ such that for all
  $\ell \in C_j\left(\xv\right)$ we have
  \begin{equation}
    \label{eq:ls-componentwise-bound}
    \biggl|\bigl(\av\left(\xv,\bv\right)-\av_0\bigr)_{\ell}\biggr| \leq \Cr{ls.ub} s \left(\frac{1}{Nh^{(j)}}\right)^{s^{(j)}-1} \|\bv-\bv_0\|_\infty.
  \end{equation}
\end{theorem}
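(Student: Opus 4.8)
The plan is to reduce the componentwise least‑squares bound to two ingredients: (i) the near‑orthogonality of the cluster subspaces from Theorem~\ref{thm.orth.spaces}, which lets us treat each cluster's block of $\av$ almost independently, and (ii) the single‑cluster singular‑value estimates of Theorem~\ref{thm:single.cl}, which control the conditioning inside each cluster. Since $\bv_0=\VV_N(\xv)\av_0$ lies exactly in the column space of $\VV_N(\xv)$ and $\VV_N(\xv)$ has full column rank (all nodes distinct), we have $\av(\xv,\bv_0)=\av_0$, so $\av(\xv,\bv)-\av_0=\VV_N(\xv)^{\dagger}(\bv-\bv_0)$. Thus the task is to bound the $\ell$‑th row of the pseudoinverse $\VV_N(\xv)^{\dagger}$ in the appropriate norm: $\bigl|(\av(\xv,\bv)-\av_0)_\ell\bigr|\le \|e_\ell^{H}\VV_N(\xv)^{\dagger}\|_1\,\|\bv-\bv_0\|_\infty$, so it suffices to bound $\|e_\ell^{H}\VV_N(\xv)^{\dagger}\|_1\le\sqrt{N+1}\,\|e_\ell^{H}\VV_N(\xv)^{\dagger}\|_2$ and then estimate the $\ell_2$‑norm of that row.

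First I would set up the block structure. Order the columns cluster‑by‑cluster, writing $\VV=[\VV^{(1)}\mid\cdots\mid\VV^{(M)}]$ with $\VV^{(j)}=\VV_N(\cv^{(j)})$, and let $P_j$ be the orthogonal projection onto $L(\cv^{(j)},N)$. The key structural fact is that the $\ell$‑th row of $\VV^{\dagger}$ for $\ell\in C_j$ can be expressed using only $\VV^{(j)}$ and a correction operator built from the cross‑Gram matrices $(\VV^{(j)})^{H}\VV^{(k)}$, $k\ne j$; by Theorem~\ref{thm.orth.spaces} (applied pairwise) these cross terms have spectral norm $\le \|P_jP_k\|\cdot(\text{factors})$ with $\|P_jP_k\|=\cos\angle_{\min}\le \frac{C}{N\theta}+CNh$, which under the hypothesis $\tfrac{\Cr{ls.N.theta}}{\theta}\le N\le\tfrac{\Cr{ls.N.h}}{h}$ is small. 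Concretely, I would use the standard perturbation identity for the pseudoinverse of a block matrix whose blocks are "nearly orthogonal": if $\VV^{(j)}=Q_jR_j$ is the QR/polar‑type factorization, then $e_\ell^{H}\VV^{\dagger}=e_\ell^{H}(R_j)^{-1}Q_j^{H}(I+E)^{-1}$ where $\|E\|\le \rho:=C_5\bigl(\tfrac{1}{N\theta}+Nh\bigr)<\tfrac12$ once the constants $\Cr{ls.N.theta},\Cr{ls.N.h}$ are chosen large/small enough. Hence $\|e_\ell^{H}\VV^{\dagger}\|_2\le (1-\rho)^{-1}\,\|e_\ell^{H}(R_j)^{-1}\|_2=(1-\rho)^{-1}\,\|e_\ell^{H}\bigl((\VV^{(j)})^{H}\VV^{(j)}\bigr)^{-1}(\VV^{(j)})^{H}\|_2\le 2\,\|e_\ell^{H}(\VV^{(j)})^{\dagger}\|_2$.

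Next I would estimate $\|e_\ell^{H}(\VV^{(j)})^{\dagger}\|_2$ for the single cluster $\cv^{(j)}$, which is an $(h^{(j)},\tau^{(j)},s^{(j)})$‑cluster. We have $\|e_\ell^{H}(\VV^{(j)})^{\dagger}\|_2\le \|(\VV^{(j)})^{\dagger}\|=\sigma_{\min}(\VV^{(j)})^{-1}$, and Theorem~\ref{thm:single.cl} gives $\sigma_{\min}(\VV^{(j)})=\sigma_{s^{(j)}}(\VV^{(j)})\ge \Cr{sing.lower.1}N^{1/2}(Nh^{(j)})^{s^{(j)}-1}$. Combining, for $\ell\in C_j$,
\begin{equation*}
\bigl|(\av(\xv,\bv)-\av_0)_\ell\bigr|\le \sqrt{N+1}\cdot 2\cdot\frac{1}{\Cr{sing.lower.1}N^{1/2}(Nh^{(j)})^{s^{(j)}-1}}\,\|\bv-\bv_0\|_\infty\le \Cr{ls.ub}\Bigl(\tfrac{1}{Nh^{(j)}}\Bigr)^{s^{(j)}-1}\|\bv-\bv_0\|_\infty,
\end{equation*}
which already gives the claimed power of $Nh^{(j)}$; the factor $\sqrt{N+1}$ cancels against $N^{-1/2}$ up to an absolute constant. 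The stated linear‑in‑$s$ factor arises if one prefers to bound $\|\bv-\bv_0\|_\infty$ through $\|\bv-\bv_0\|_2$ or to account for the $\ell_1\to\ell_2$ passage more carefully cluster‑by‑cluster; in any case tracking the dependence shows it is at worst linear in $s$, as the only place $s$ enters is the dimension count and the number $M\le s$ of pairwise orthogonality corrections, each contributing additively to $\rho$ (this is why the constants $\Cr{ls.N.theta},\Cr{ls.N.h}$ must be allowed to depend on all the $s^{(j)}$, to keep $M\rho<\tfrac12$).

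The main obstacle is step (i): making the block‑pseudoinverse perturbation argument rigorous and uniform. One must show that the compound operator $E$ collecting all $M-1$ cross‑cluster interactions has norm controlled by the \emph{sum} of the pairwise $\cos\angle_{\min}$ bounds from Theorem~\ref{thm.orth.spaces}, and that this does not accidentally pick up a factor exponential in $s$ or in the local multiplicities. This is essentially the content of Theorem~\ref{thm.union} and Proposition~\ref{prop:mult.clust.angl} (referenced in the excerpt), so I would invoke those: they already establish that the Gram matrix $\VV^{H}\VV$ is, after the block‑diagonal normalization, a small perturbation of its block‑diagonal part, with perturbation size $\le \Cr{vandermonde.union.N}/(N\theta)+\Cr{vandermonde.union.N.h}Nh<1$. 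Feeding that into the identity $\VV^{\dagger}=(\VV^{H}\VV)^{-1}\VV^{H}$ and extracting the $\ell$‑th row then reduces cleanly to the single‑cluster inverse Gram matrix, and the remaining single‑cluster estimate is exactly Theorem~\ref{thm:single.cl}.
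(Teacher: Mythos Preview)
Your approach is essentially the same as the paper's: reduce to bounding the $\ell$-th row of $\VV_N^{\dagger}$ via the block QR decomposition $\VV_N=\QQ\RRR$ (from Lemma~\ref{lem.sing.orth.spaces} and Proposition~\ref{prop:mult.clust.angl}), control the $\QQ^{\dagger}$ factor by near-orthogonality, and control the $\RRR_j^{-1}$ factor by Theorem~\ref{thm:single.cl}. One small correction: the correct identity is $\VV^{\dagger}=\RRR^{-1}(I+E)^{-1}\QQ^{H}$ with $E=\QQ^{H}\QQ-I\in\CC^{s\times s}$, not $e_\ell^{H}(R_j)^{-1}Q_j^{H}(I+E)^{-1}$ as you wrote (the $(I+E)^{-1}$ sits between $\RRR^{-1}$ and the full $\QQ^{H}$), but your norm estimates go through unchanged once this is fixed, and the paper's explicit factor $s$ arises only because it routes through $\|\QQ^{\dagger}\|_F\le\sqrt{s}\,\|\QQ^{\dagger}\|$ rather than your $\ell_2$ row bound.
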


The above result shows that for multi-cluster distributions of the
nodes, the componentwise condition numbers are much more accurate than
the standard condition number. Indeed, without any geometric
assumptions on the node distribution, the classical condition number
$\kappa(\VV_N) = \frac{\sigma_{\max}(\VV_N)}{\sigma_{\min}(\VV_N)}$ is
well-known to grow exponentially with $s$, see
e.g. \cite{beckermann2019,pan2016} and references therein. In
contrast, the errors in \eqref{eq:ls-componentwise-bound} grow
exponentially with the multiplicities of each cluster, and only
linearly in the overall number of nodes (compare also with
\cite[Corollary 3.10]{batenkov2018a}).

\begin{remark}
  If one considers perturbations in $\xv$ as well, the stability
  analysis becomes more complicated, see
  e.g. \cite{bjorck1991,higham_survey_1994}. The main point we would
  like to emphasize here is that the components of $\av$ have
  different condition numbers according to the multiplicity of the
  nodes of $\xv$ in the corresponding cluster.
\end{remark}

\section{Orthogonality of cluster subspaces}\label{sec:ortho}

In this section we prove Theorem \ref{thm.orth.spaces}. To facilitate
the reading, let us start with a short overview of the steps.
\begin{enumerate}
\item In Section \ref{sec:limit-spaces-bases} we introduce several
  objects associated with an $(h,s)$-cluster $\xv$: a particular basis
  for $L=L(\xv,N)$, called the ``divided difference basis''; the limit
  space $\bar{L}$; and a certain basis for $\bar{L}$ called the
  ``limit basis''. We show that the spaces $L$ and $\bar{L}$ are
  ``close'', in the sense that the divided difference basis vectors of
  $L$ are close to the corresponding limit basis vectors of $\bar{L}$,
  up to order $\MO(Nh)$.
\item Next, in Subsection \ref{sec.limit.basis.non.deg} we show that
  limit basis is well-conditioned (i.e. the smallest singular value of
  the corresponding matrix is effectively bounded from below for large
  enough $N$).
\item Given two different clusters of the nodes, $\xv$ with $s_1$
  nodes and $\yv$ with $s_2$ nodes, in Subsection
  \ref{sec.orth.limit.spaces} we prove the following key property: the
  limit spaces $\bar{L}(\xv,N)$, $\bar{L}(\yv,N)$, are nearly
  orthogonal, with $\angle_{\min}(\bar{L}_1,\bar{L}_2)$ being of order
  $\frac{\pi}{2}-\MO\left(\frac{1}{N}\right)$.
\item Combining the above results, in Subsection
  \ref{sec.orth.cluster.spaces} we conclude that the angle between any
  $\vv\in L_1$ and $\uv\in L_2$ is at least
  ${\pi\over 2}-\MO(Nh)-\MO({1\over N})$, completing the proof.
\end{enumerate}

\subsection{Spaces and bases}\label{sec:limit-spaces-bases}
Let $\xv$ form an $(h,s)$-cluster as in Definition
\ref{def.sigle.cluster}, fix $N\geq s-1$, and, as per Definition
\ref{def.l.x}, let
$$
\Lx = L(\xv,N)= \Span\left\{\vv_1,\ldots,\vv_s\right\}\subset \CC^{N+1},\quad \VV_N(\xv) = \left[\vv_1,\ldots,\vv_s\right].
$$

We start by constructing a basis for $\Lx$, different from
$\left\{\vv_1,\dots,\vv_s\right\}$, which is given by certain divided
differences of the vectors $\vv_1,\ldots,\vv_s$.  For the reader's
convenience we recall the definition of divided differences below, and
list several of their propoerties in Lemma
\ref{lem:dd-properties} in Appendix \ref{appen.dd}. For
further references see e.g. \cite{de2005divided} and
\cite[Section 6.2]{batenkov_geometry_2014}.

\begin{definition}[Divided finite differences]\label{def.divided.diff}
  Let an arbitrary sequence of points $(t_1,t_2,\dots,)$ be given
  (repetitions are allowed). For any $n=1,2,\dots,$, and for any
  smooth enough real-valued function $f$, defined at least on
  $t_1,\dots,t_n$, the $n-1$-st divided difference $[t_1,\dots,t_n]f$
  is the $n$-th coefficient, in the Newton form, of the (uniquely
  defined) Hermite interpolation polynomial $p$, which agrees with $f$
  and its derivatives of appropriate order on $t_1,\dots,t_n$,
  i.e. $[t_1,\dots,t_n]f\equiv [t_1,\dots,t_n]p$ where
  \begin{align*}
  f^{(\ell)}(t_j) &= p^{(\ell)}(t_j): \quad 1 \leq j \leq n, \;0 \leq \ell < d_j:=\#\left\{i:\;\;t_i=t_j\right\}; \\
    p(t) &\equiv \sum_{j=1}^n \left\{[t_1,\dots,t_j]p\right\}\prod_{k=1}^{j-1} (t-t_j).
  \end{align*}
  Divided differences of complex-valued functions are defined by
  applying them to the real and the imaginary parts separately:
  $$
  [t_1,\dots,t_n](f+\imath g) = [t_1,\dots,t_n]f + \imath [t_1,\dots,t_n]g.
  $$

  For a vector of functions $\fv = \left(f_1,\dots,f_m\right)$, we
  denote
  $$
  [t_1,\dots,t_n]\fv := \left([t_1,\dots,t_n]f_1,\dots,[t_1,\dots,t_n]f_m\right)\in\CC^m.
  $$
\end{definition}

Denote by
  $$
  \vv_N(x)=\left( 1, e^{\imath x}, e^{2\imath x}, \dots, e^{N\imath x}\right)
  $$
  the vector of $N+1$ exponential functions. Note that by
  \eqref{eq:dd-recursive-comp} the standard
  basis for $L(\xv,N)$ can be simply written as
  \begin{equation}\label{eq:vv-as-dd}
    \vv_j \equiv [x_j]\vv_N(x),\quad j=1,\dots,s.
  \end{equation}

  \begin{definition}\label{def.divided.diff.basis}
    Given $\xv=\left\{ x_1,\dots,x_s\right\} \subset (-\pi,\pi]$ and a
    positive integer $N$, define, for every
    $j\in\left\{1,\dots,s\right\}$ the following vector:
    \begin{align}
      \label{eq:dd-basis-def}
      \wv_j &:= (j-1)![x_1,\dots,x_j]\vv_N(x), & \tilde{\wv}_j:=\frac{\wv_j}{\|\wv_j\|}.
    \end{align}
    The ordered set $\WB=\WB(\xv)=\left\{\wv_1,\dots,\wv_s\right\}$
    is called the \emph{divided difference basis} to $L(\xv)$, and the
    ordered set
    $\widetilde{\WB}=\widetilde{\WB}(\xv) =
    \left\{\tilde{\wv}_1,\dots,\tilde{\wv}_s\right\}$ is called the
    normalized divided difference basis to $L(\xv)$.
  \end{definition}

  \begin{definition}[Limit basis and limit space]\label{def:limit.vectors}
    Given $\zeta \in (-\pi,\pi]$ and positive integers $N,s$, define,
    for every $j\in\left\{1,\dots,s\right\}$ the following vector:
    \begin{align}  
      \label{eq:lmit-basis-def}
      \uv_j &:= (j-1)![\underbrace{\zeta,\dots,\zeta}_{j\text{ times}}]\vv_N(x), & \tilde{\uv}_j = \frac{\uv_j}{\|\uv_j\|}.
    \end{align}
    The ordered set
    $\UB(\zeta,N,s):=\left\{\uv_1,\allowbreak \ldots,\uv_s \right\}
    \subset \CC^{N+1}$ is called the $(\zeta,N,s)$ limit basis, and
    the ordered set
    $\widetilde{\UB}=\widetilde{\UB}(\zeta,N,s)=\left\{\tilde{\uv}_1,\ldots,\tilde{\uv}_s\right\}$
    is called the normalized $(\zeta,N,s)$ limit basis.

    We further define the limit space at $\zeta$ as
    $$
    \bar{L}(\zeta,N,s) := \Span
    \left\{\UB\left(\zeta,N,s\right)\right\}\subset \CC^{N+1}.
    $$
  \end{definition}
  \begin{definition}[Cluster limit space]\label{def:cluster-lim-space}
    For $\xv$ forming an $(h,s)$-cluster we define the cluster limit
    space $\bar{L}(\xv,N)$ by
    $$
    \bar{L}(\xv,N) := \bar{L}(x_1,N,s) \subset \CC^{N+1}.
    $$      
  \end{definition}

  \begin{remark}
    The choice of the point $x_1$ in the definition of $\bar{L}(\xv,N)$
    is arbitrary, in the sense that all subsequent results hold true
    if we replace $x_1$ in Definition \ref{def:cluster-lim-space} with
    another node $x_j\in\xv$, or, even more generally, with any other
    point $x\in[\min_j x_j,\max_j x_j]$. Furthermore, note that
    $\bar{L}(\xv,N)$ depends neither on the cluster size $h$ nor on
    the relative positions of the points inside the cluster.
  \end{remark}

  Below we establish several properties of the sets $\WB$,
  $\widetilde{\WB}$, $\UB$, $\widetilde{\UB}$ and the relationships
  between them, which will be used in the rest of the section, towards
  the proof of Theorem \ref{thm.orth.spaces} in Subsection
  \ref{sec.orth.cluster.spaces}.
  
  \begin{proposition}\label{prop:basis-properties}
    Let $\xv$ form an $(h,s)$-cluster, and let $N\geq s-1$.
    \begin{enumerate}
    \item $\Span\left\{\WB(\xv)\right\}=L(\xv)$.
    \item Each $\uv_j\in\UB(\zeta,N,s)$ is explicitly given by
      \begin{align}\label{eq:ujk-explicit}
        \uv_{j,k}&=\frac{\dt^{j-1}}{\dt x^{j-1}}
        e^{\imath k x}\Big|_{\zeta}=(\imath k)^{j-1}e^{\imath k\zeta},&
        j=1,\ldots,s,\; k=0,\ldots,N,
      \end{align}
    \item $\UB(\zeta,N,s)$ is a linearly independent set, i.e. it is
      indeed a basis for $\bar{L}(\zeta,N,s)$.
    \item Putting $\zeta=x_1$, then
      \begin{align*}
	\lim_{h\to 0} \wv_j &= \uv_j \in \UB\left(x_1,N,s\right),& j\in\left\{1,\ldots,s\right\}.
      \end{align*}
    \item With
      $\Cl{const.norm.l.b}=\Cr{const.norm.l.b}(s):=\frac{1}{\sqrt{2s-1}}$,
      we have
	\begin{alignat}{2}\label{eq.prop.norm.u}
          \Cr{const.norm.l.b} N^{j-\frac{1}{2}} \le & \|\uv_j\| \le
          N^{j-\frac{1}{2}},\quad \uv_j \in \UB(\zeta,N,s), \;
          j\in\left\{1,\dots,s\right\}.
        \end{alignat}
      \item With $\Cl{cnst.divided.derivative.diff}:=2\sqrt{2}$, we have
        \begin{equation}
          \label{eq:diff.before.limit}
          \left\|\tilde{\uv}_{j}-\tilde{\wv}_{j}\right\| \le
          \Cr{cnst.divided.derivative.diff}N h,\quad \tilde{\uv}_j \in \widetilde{\UB}(x_1,N,s),\; \tilde{\wv}_j \in \widetilde{\WB}(\xv),\;j\in\left\{1,\dots,s\right\}.
        \end{equation}
      \end{enumerate}
  \end{proposition}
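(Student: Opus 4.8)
The plan is to obtain items (1)--(5) quickly from the elementary properties of divided differences recalled in Lemma \ref{lem:dd-properties}, and to concentrate the real work on item (6). For item (1) I would use the closed form $[x_1,\dots,x_j]f=\sum_{i=1}^{j}f(x_i)\big/\prod_{k\ne i}(x_i-x_k)$ (valid since the cluster nodes are pairwise distinct): together with $\vv_\ell=[x_\ell]\vv_N$ it shows that $\wv_j$ is a linear combination of $\vv_1,\dots,\vv_j$ whose coefficient of $\vv_j$, namely $(j-1)!\big/\prod_{k<j}(x_j-x_k)$, is nonzero; hence $\WB(\xv)$ arises from $\{\vv_1,\dots,\vv_s\}$ by a triangular, invertible change of basis and $\Span\{\WB(\xv)\}=L(\xv)$. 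Item (2) is the confluence identity $[\underbrace{\zeta,\dots,\zeta}_{j}]f=f^{(j-1)}(\zeta)/(j-1)!$ applied entrywise to $\vv_N(x)=(e^{\imath kx})_{k=0}^{N}$. Given (2), item (3) follows because the matrix $[\uv_1|\cdots|\uv_s]$ has $(k,j)$ entry $(\imath k)^{j-1}e^{\imath k\zeta}$; rescaling row $k$ by $e^{-\imath k\zeta}$ and column $j$ by $\imath^{-(j-1)}$ (which preserves rank) leaves the real matrix $\bigl(k^{j-1}\bigr)$, whose top $s\times s$ block is the nonsingular Vandermonde matrix on the distinct nodes $0,1,\dots,s-1$ (this is where $N\ge s-1$ enters). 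Item (4) is the continuity of divided differences in their nodes, valid across coincidences: since an $(h,s)$-cluster lies in an interval of length $h$ around $x_1$ once $h$ is small, letting $h\to0$ sends $x_2,\dots,x_j\to x_1$, hence $\wv_j=(j-1)![x_1,\dots,x_j]\vv_N\to(j-1)![x_1,\dots,x_1]\vv_N=\uv_j$. Item (5) is a power-sum estimate: by (2), $\|\uv_j\|^2=\sum_{k=0}^N k^{2(j-1)}$, and comparing this sum with the integrals $\int_0^N x^{2(j-1)}\,dx$ and $\int_1^{N+1}x^{2(j-1)}\,dx$ yields \eqref{eq.prop.norm.u}, the lower constant $\Cr{const.norm.l.b}=1/\sqrt{2s-1}$ coming from $1/(2j-1)\ge1/(2s-1)$.

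The substance of the proposition is item (6). I would start from the Hermite--Genocchi integral representation, which writes $(j-1)![x_1,\dots,x_j]f=(j-1)!\int_{\Sigma_{j-1}}f^{(j-1)}\bigl(\xi(t)\bigr)\,dt$, where $\Sigma_{j-1}$ is the standard $(j-1)$-simplex (of total mass $1/(j-1)!$) and $\xi(t)$ ranges over the convex hull of $x_1,\dots,x_j$; the confluent case gives $\uv_j=\vv_N^{(j-1)}(x_1)$. Subtracting the two and evaluating on the $k$-th coordinate function $f(x)=e^{\imath kx}$, for which $f^{(j-1)}(x)=(\imath k)^{j-1}e^{\imath kx}$, produces the componentwise bound
$$
\bigl|(\wv_j-\uv_j)_k\bigr|\le k^{j-1}\sup_{t}\bigl|e^{\imath k\xi(t)}-e^{\imath kx_1}\bigr|\le k^{j-1}\cdot kh=kh\,\bigl|(\uv_j)_k\bigr|,
$$
using $|\xi(t)-x_1|\le h$ (the cluster having diameter $\le h$, once $Nh$ is below a fixed absolute constant folded into $\Cr{const.high}$) and the Lipschitz bound $|e^{\imath ka}-e^{\imath kb}|\le k|a-b|$. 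Summing the squares over $k=0,\dots,N$ then gives the clean inequality $\|\wv_j-\uv_j\|\le Nh\,\|\uv_j\|$.

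Finally I would pass to the normalized vectors through the elementary fact that if $u,w$ are nonzero with $\|w-u\|\le\varepsilon\|u\|$ and $\varepsilon<1$, then $\|w\|\ge(1-\varepsilon)\|u\|$ and hence $\bigl\|\tfrac{u}{\|u\|}-\tfrac{w}{\|w\|}\bigr\|\le\tfrac{2\|w-u\|}{\|w\|}\le\tfrac{2\varepsilon}{1-\varepsilon}$; with $\varepsilon=Nh$ and $\Cr{const.high}$ chosen so that $2/(1-Nh)\le2\sqrt2$ this yields $\|\tilde\uv_j-\tilde\wv_j\|\le\Cr{cnst.divided.derivative.diff}Nh$ with $\Cr{cnst.divided.derivative.diff}=2\sqrt2$, which is \eqref{eq:diff.before.limit}. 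The only delicate point in the whole argument is item (6): the comparison must be done \emph{componentwise}, so that the sole information used about the magnitudes of the vectors is the exact identity $|(\uv_j)_k|=k^{j-1}$ and not the two-sided estimate of item (5); this is precisely what keeps the constant $\Cr{cnst.divided.derivative.diff}$ absolute rather than $s$-dependent, and the Hermite--Genocchi formula is what makes such a componentwise Lipschitz estimate possible.
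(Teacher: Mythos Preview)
Your treatment of items (1)--(5) matches the paper's essentially line for line. For item (6) your approach is close to the paper's but differs in two places, one of which creates a small gap relative to the statement as written.

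First, for the componentwise estimate you invoke the Hermite--Genocchi integral representation, whereas the paper uses the mean value theorem for divided differences \eqref{eq:dd-mean-value} applied separately to the real and imaginary parts of $e^{\imath kx}$. Your route is slightly cleaner: it yields $|(\wv_j-\uv_j)_k|\le k^j h$ directly, while the paper picks up an extra factor of $\sqrt 2$ from splitting into real and imaginary parts. Both feed into the same inequality $\|\wv_j-\uv_j\|\le \text{const}\cdot Nh\,\|\uv_j\|$.

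Second, in the normalization step you bound $\bigl\|\tilde\uv_j-\tilde\wv_j\bigr\|$ by $2\|\uv_j-\wv_j\|/\|\wv_j\|$ and then use $\|\wv_j\|\ge(1-Nh)\|\uv_j\|$, arriving at $2Nh/(1-Nh)$ and hence needing the side condition $Nh\le 1-1/\sqrt2$ to reach the constant $2\sqrt2$. The proposition, however, carries no hypothesis on $Nh$; folding the restriction into $\Cr{const.high}$ belongs to Theorem~\ref{thm.orth.spaces}, not here. The paper avoids this by putting $\|\uv_j\|$ rather than $\|\wv_j\|$ in the denominator: from
\[
\Bigl\|\tfrac{\uv_j}{\|\uv_j\|}-\tfrac{\wv_j}{\|\wv_j\|}\Bigr\|
\le \frac{1}{\|\uv_j\|}\bigl(\|\uv_j-\wv_j\|+\bigl|\|\uv_j\|-\|\wv_j\|\bigr|\bigr)
\le \frac{2\|\uv_j-\wv_j\|}{\|\uv_j\|},
\]
the paper gets $2\sqrt2\,Nh$ unconditionally (the $\sqrt2$ coming from its componentwise bound). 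With your sharper componentwise bound and this same normalization inequality you would obtain $\|\tilde\uv_j-\tilde\wv_j\|\le 2Nh\le 2\sqrt2\,Nh$ with no restriction on $Nh$ at all. That one-line swap closes the gap.
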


  \begin{proof}

    In the proofs below, we use results from Appendices \ref{appen.dd} and
    \ref{appen.power.sums}.
    
    \begin{enumerate}
    \item Using the extended form \eqref{eq:dd-distinct-explicit}
      together with \eqref{eq:vv-as-dd}, we see that each vector
      $\wv_j$ is a linear combination of the original basis vectors
      $\vv_1,\ldots,\vv_j$, i.e.
      $$\wv_j=\sum_{k=1}^j \frac{1}{\prod_{k\ne j}(x_j-x_k)}\vv_k.$$
      Hence the set $\wv_1,\ldots,\wv_s$ is given by a triangular
      transformation, with non-zero coefficients, of the original
      basis $\vv_1,\ldots,\vv_s$, and therefore forms another basis to
      the subspace $L(\xv)$.
    \item Follows from \eqref{eq:dd-repeated-all} and \eqref{eq:lmit-basis-def}.
    \item Using the previous explicit formula for $\uv_j$, the matrix
      $\left[\uv_1,\dots,\uv_s\right]\in\CC^{(N+1)\times s}$ is, up to
      a diagonal factor, the Pascal-Vandermonde matrix and is known to
      be full rank (see e.g. \cite[Section 4.1]{batenkov_stability_2016}).
    \item Directly follows from the definitions and the continuity
      property \eqref{eq:dd-continuity}.
    \item For each $j\in\{1,\ldots,s\}$ we have that
      $\|\uv_j\|=\sqrt{\sum_{k=0}^N k^{2(j-1)}}$.
	Using \eqref{eq.integer.power.sum} we have 
	\begin{equation*}
	\frac{N^{j-\frac{1}{2}}}{\sqrt{2s-1}} \le \frac{N^{j-\frac{1}{2}}}{\sqrt{2j-1}} \le \|\uv_j\|\le
	N^{j-\frac{1}{2}},
	\end{equation*}
	which then proves \eqref{eq.prop.norm.u}. 
      \item First we have that
	\begin{align}\label{eq.normelized.vector.diff}
	\begin{split}
	\left\|\tilde{\uv}_{j}-\tilde{\wv}_{j}\right\| =
	\left\|\frac{\uv_j}{\|\uv_j\|}-\frac{\wv_j}{\|\wv_j\|}\right\|&=
	\frac{1}{\|\uv_j\|}\left\|\uv_j-\wv_j +\left(1-\frac{\|\uv_j\|}{\|\wv_j\|}\right)\wv_j\right\|\\
	& \le \frac{1}{\|\uv_j\|}\left(\|\uv_j-\wv_j\|+\big|  \|\uv_j\| - \|\wv_j\| \big|\right)\\
	& \le 2 \frac{\|\uv_j-\wv_j\|}{\|\uv_j\|}.
	\end{split}
	\end{align}
	Let $k\in\left\{0,1,\dots,N\right\}$. By assumption,
        $|x_{\ell}-x_m| \leq h$ for all
        $\ell,m \in \left\{1,\dots,j\right\}$. Using
        \eqref{eq:ujk-explicit}, \eqref{eq:dd-basis-def}, and applying
        \eqref{eq:dd-mean-value} to the real and the imaginary parts
        of $e^{ikx}$, we obtain
        $\xi_1,\xi_2\in [\min_{\ell} x_{\ell}, \max_{\ell} x_{\ell}]$ such that
  \begin{align*}
    \left|\uv_{j,k}-\wv_{j,k}\right|^2 &=
                                         \left|\frac{\dt^{j-1}}{\dt x^{j-1}} e^{\imath k
                                         x}\Big|_{x_1}-(j-1)![x_1,\ldots,x_j]e^{\imath kx}\right|^2 \\
                                       &= \Bigg|\frac{\dt ^{j-1}}{\dt x^{j-1}}
                                         \cos(kx)\Big|_{x_1}-\frac{\dt^{j-1}}{\dt x^{j-1}}
                                         \cos(kx)\Big|_{\xi_1}\Bigg|^2+\Bigg|\frac{\dt^{j-1}}{\dt x^{j-1}}
                                         \sin(kx)\Big|_{x_1}-\frac{\dt^{j-1}}{\dt x^{j-1}}
                                         \sin(kx)\Big|_{\xi_2}\Bigg|^2 \\
    & \leq 2k^{2j}h^2,
  \end{align*}
  where the last line is obtained by applying the standard mean value
  theorem to $\cos^{(j-1)}(kx)$ and $\sin^{(j-1)}(kx)$, respectively.

  Now $\left|\uv_{j,k}-\wv_{j,k}\right|\le \sqrt{2}k^{j}h$, which
  implies that
  \[\|\uv_j-\wv_j\|\le \sqrt{2}h\left(\sum_{k=1}^n
      k^{2j}\right)^{1/2}.\]

  Using $\|\uv_j\|=\sqrt{\sum_{k=0}^N k^{2(j-1)}}$ and
  \eqref{eq.normelized.vector.diff}, we obtain that
  \[
    \left\|\tilde{\uv}_{j}-\tilde{\wv}_{j}\right\|\leq 2 \sqrt{2}h
    \left(\frac{\sum_{k=1}^N k^{2j}}{\sum_{k=1}^N
        k^{2(j-1)}}\right)^{1/2}\leq 2\sqrt{2} hN. \qedhere
  \]
\end{enumerate}
\end{proof}

\subsection{Conditioning of the limit  basis}\label{sec.limit.basis.non.deg}

Given $\zeta \in (-\pi,\pi]$, and $N\geq s-1$, consider the normalized
limit basis $\widetilde{\UB}=\widetilde{\UB}(\zeta,N,s)$ to the limit
space $\bar{L}(\zeta,N,s) \subset \CC^{N+1}$. While for any fixed $N$
we have seen that $\widetilde{\UB}$ is linearly independent, in this
section we will furthermore establish that for sufficiently large $N$
the corresponding condition number is bounded from below by a constant
which does not depend on $N$.

	For each $\zeta$ as above, let $\UU(\zeta,N,s)\in
\CC^{(N+1)\times s}$ denote the matrix with columns
$\left\{\tilde{\uv}_1,\ldots,\tilde{\uv}_s\right\}$:
        $$
        \UU(\zeta,N,s)=[\tilde{\uv}_1,\ldots,\tilde{\uv}_s]=
	  \begin{bmatrix} 1 & 0 & \dots & 0 \\ e^{\imath\zeta} &
(\imath)e^{\imath\zeta} & \dots & (\imath)^{s-1}e^{\imath \zeta} \\
e^{\imath 2\zeta} & (\imath 2)e^{\imath 2\zeta} & \dots & (\imath
2)^{s-1}e^{\imath 2\zeta} \\ \vdots & \vdots & \vdots & \vdots \\
e^{\imath N\zeta} & (\imath N)e^{\imath N\zeta} & \dots & (\imath
N)^{s-1}e^{\imath N\zeta}
\end{bmatrix} \cdot \diag(\|\uv_1\|^{-1} ,\ldots,\|\uv_s\|^{-1}).
  	$$ 
		
	\begin{proposition}\label{prop.sing.limit}
          Given a positive integer $s$ and $\zeta \in (-\pi,\pi]$,
          there exist a monotonically increasing constant
          $\Cl[N]{const.limit.matrix.N}=\Cr{const.limit.matrix.N}(s)$
          and a monotonically decreasing constant $\Xi=\Xi(s) > 0$, such
          that for any $N\ge \Cr{const.limit.matrix.N}$,
		$$\sigma_{\min}\left(\UU(\zeta,N,s)\right) \ge \Xi.$$
		Moreover,
                $\Xi=\sqrt{\frac{\lambda_{\min}(\bar{\mathbf{H}}_s)}{2}}$,
                where $\bar{\mathbf{H}}_s$ is the normalized
                $s \times s$ Hilbert matrix defined in
                \eqref{eq:hilbert-matrix-def} below.
	\end{proposition}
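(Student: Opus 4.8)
The plan is to understand $\UU(\zeta,N,s)^H \UU(\zeta,N,s)$ in the limit $N\to\infty$ and show it converges (after the normalization by $\diag(\|\uv_j\|^{-1})$) to the normalized Hilbert matrix $\HilbM_s$. The key observation is that the $(j,k)$-entry of the un-normalized Gram matrix is
$$
\langle \uv_j, \uv_k\rangle = \sum_{m=0}^N \overline{(\imath m)^{j-1}} (\imath m)^{k-1} e^{\imath m\zeta} e^{-\imath m \zeta} = \imath^{k-j}\sum_{m=0}^N m^{j+k-2},
$$
so crucially the factors $e^{\imath m\zeta}$ cancel and the Gram matrix does \emph{not} depend on $\zeta$ at all. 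Using the power-sum estimate from Appendix~\ref{appen.power.sums} (equation \eqref{eq.integer.power.sum}), one has $\sum_{m=1}^N m^{p} = \frac{N^{p+1}}{p+1}\left(1 + \MO(1/N)\right)$. Dividing row $j$ and column $k$ by $\|\uv_j\|\approx N^{j-1/2}/\sqrt{2j-1}$ and $\|\uv_k\|$ respectively, the $(j,k)$-entry of $\UU^H\UU$ becomes
$$
\frac{\imath^{k-j}\sum_{m=1}^N m^{j+k-2}}{\|\uv_j\|\,\|\uv_k\|} = \imath^{k-j}\cdot\frac{N^{j+k-1}/(j+k-1)}{N^{j+k-1}/\sqrt{(2j-1)(2k-1)}}\left(1+\MO(1/N)\right) = \imath^{k-j}\frac{\sqrt{(2j-1)(2k-1)}}{j+k-1}\left(1+\MO(1/N)\right).
$$
This identifies the limiting matrix: $\left(\HilbM_s\right)_{j,k} = \frac{\sqrt{(2j-1)(2k-1)}}{j+k-1}$ (up to the unimodular diagonal conjugation by $\diag(\imath^{j})$, which does not change singular values / eigenvalues), which is exactly the normalized Hilbert matrix referred to in \eqref{eq:hilbert-matrix-def}.

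With this in hand, the argument is: let $\GG_N := \UU(\zeta,N,s)^H\UU(\zeta,N,s)$ and $\GG_\infty := \DD^H\HilbM_s\DD$ where $\DD=\diag(1,\imath,\ldots,\imath^{s-1})$, so that $\sigma_{\min}(\UU(\zeta,N,s))^2 = \lambda_{\min}(\GG_N)$ and $\lambda_{\min}(\GG_\infty) = \lambda_{\min}(\HilbM_s) =: 2\Xi^2$. The entrywise estimate above gives $\|\GG_N - \GG_\infty\|_{\max} \le C_N(s)/N$ for an explicit $C_N(s)$, hence by \eqref{eq.matnorm} $\|\GG_N - \GG_\infty\| \le \sqrt{s}\,\|\GG_N-\GG_\infty\|_F \le s C_N(s)/N$ (the rank is at most $s$). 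By Weyl's inequality for eigenvalues of Hermitian matrices, $\lambda_{\min}(\GG_N) \ge \lambda_{\min}(\GG_\infty) - \|\GG_N - \GG_\infty\| \ge 2\Xi^2 - sC_N(s)/N$. Choosing $\Cr{const.limit.matrix.N}(s)$ large enough that $sC_N(s)/N \le \Xi^2$ for all $N \ge \Cr{const.limit.matrix.N}(s)$ — which is possible since $\Xi^2 = \lambda_{\min}(\HilbM_s)/2 > 0$ because $\HilbM_s$ is a positive definite matrix (a Gram matrix of linearly independent functions, or a diagonal scaling of the classical Hilbert matrix) — yields $\lambda_{\min}(\GG_N) \ge \Xi^2$, i.e. $\sigma_{\min}(\UU(\zeta,N,s)) \ge \Xi$. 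Monotonicity of $\Cr{const.limit.matrix.N}$ in $s$ and of $\Xi$ in $s$ follows since adding a row/column to the Hilbert matrix interlaces its eigenvalues downward (so $\lambda_{\min}$ decreases in $s$), while the required threshold $N$ only grows.

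I expect the main technical point — though not a deep obstacle — to be producing clean, fully explicit constants in the power-sum asymptotics so that the error term $\|\GG_N - \GG_\infty\|_{\max}$ is bounded by a genuinely computable $C_N(s)/N$; the estimate $\sum_{m=1}^N m^p - \frac{N^{p+1}}{p+1}$ must be controlled uniformly over $2 \le p+1 \le 2s-1$, and one has to be a little careful that the normalization constants $\|\uv_j\|$ appearing in the denominator are themselves only known up to the two-sided bounds \eqref{eq.prop.norm.u}, so it is cleaner to compute $\|\uv_j\|^2 = \sum_{m=0}^N m^{2(j-1)}$ exactly and track its ratio to $N^{2j-1}/(2j-1)$. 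The only genuinely external input is the positive definiteness of $\HilbM_s$ — this is classical (the Hilbert matrix $\left(\frac{1}{j+k-1}\right)$ is positive definite, being the Gram matrix of the monomials $1,t,\ldots,t^{s-1}$ in $L^2[0,1]$, and $\HilbM_s$ differs from it by the positive diagonal congruence $\diag(\sqrt{2j-1})$), so $\Xi = \sqrt{\lambda_{\min}(\HilbM_s)/2}$ is a well-defined positive constant depending only on $s$, as claimed. Everything else is bookkeeping.
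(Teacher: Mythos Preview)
Your approach is essentially identical to the paper's: compute the Gram matrix, show it converges to (a diagonal unitary conjugate of) the normalized Hilbert matrix $\HilbM_s$ with $\MO(1/N)$ entrywise error, then apply Weyl's inequality and choose $N$ large enough. One small slip: the chain ``$\|\GG_N-\GG_\infty\| \le \sqrt{s}\,\|\GG_N-\GG_\infty\|_F$ (the rank is at most $s$)'' is confused---you simply want $\|A\|\le\|A\|_F\le s\|A\|_{\max}$ for an $s\times s$ matrix---but the conclusion $\|\GG_N-\GG_\infty\|\le sC(s)/N$ and the rest of the argument are correct.
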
  
	\begin{proof}
		First we extract the $\imath^{j-1}$ factor from each column by putting 
		$$\tilde{\UU}_N = \UU(\zeta,N,s) \cdot \diag(\imath^{-0},\ldots,\imath^{-(s-1)}),$$ 
		and clearly $\sigma_{\min}\left(\tilde{\UU}_N\right)=\sigma_{\min}\left(\UU(\zeta,N,s)\right)$. 
		We therefore continue with $\tilde{\UU}_N$.
		
		\smallskip
		
		Next we consider the Gramian matrix $\tilde{\UU}_N^H \tilde{\UU}_N$, and we have that
		\begin{equation}\label{eq.gram.U'U}
			\left[\tilde{\UU}_N^H \tilde{\UU}_N\right]_{j,l} = \frac{\sum_{k=0}^{N}k^{j+l-2}}{\|\uv_j\|\cdot\|\uv_l\|}=
			\frac{\sum_{k=0}^{N}k^{j+l-2}}{\sqrt{\left(\sum_{k=0}^N k^{2(j-1)}\right)\left(\sum_{k=0}^N k^{2(l-1)}\right)}}.			
		\end{equation}

		By combining \eqref{eq.gram.U'U} and
                \eqref{eq.Faulhaber.formula} we get\footnote{Notice
                  also that an explicit bound for the $\MO(N)$ terms
                  can be obtained from Faulhaber's formula given in
                  \eqref{eq.Faulhaber.formula}.}
		\begin{equation}\label{eq.limit.hilbert.matrix}
		  \left[\tilde{\UU}_N^H \tilde{\UU}_N\right]_{j,l}  = 
		  \frac{\frac{N^{j+l-1}}{j+l-1} + \MO(N^{j+l-2})}{\sqrt{\left(\frac{N^{2j-1}}{2j-1} +
		  \MO(N^{2j-2})\right)\left(\frac{N^{2l-1}}{2l-1} + \MO(N^{2l-2})\right)}}=\frac{\sqrt{2j-1}\sqrt{2l-1}}{j+l-1} +
		  \MO\left(\frac{1}{N}\right),		
		\end{equation}
		where the $\MO$ asymptotic notation here and
                throughout the rest of the proof will always refer to
                $N$.
		
		Define the inner product $\langle\cdot,\cdot\rangle_H$ and the corresponding norm $\|\cdot\|_H$, over the vector space of
		polynomials of degree smaller than $s$, as
		\begin{equation}\label{eq.hilbert.inner.product}
			\langle P,Q\rangle_H=\int_{0}^1 P(x) \overline{Q(x)}\dt x.		
		\end{equation}
		 
		Then by \eqref{eq.limit.hilbert.matrix} we can write $\tilde{\UU}_N^H \tilde{\UU}_N$ as follows
		\begin{equation}\label{eq.limit.hilbert.plus.error}
			\tilde{\UU}_N^H \tilde{\UU}_N = \bar{\mathbf{H}}_s + \mathbf{E}_s,		
		\end{equation}
		where $\mathbf{E}_s,\bar{\mathbf{H}}_s$ are Hermitian matrices,   
		the entries of $\mathbf{E}_s$ are $\MO\left(\frac{1}{N}\right)$, and $\bar{\mathbf{H}}_s$ is 
		the normalized Hilbert matrix (see e.g \cite{choi1983tricks,todd1954condition}),
		\begin{equation}\label{eq:hilbert-matrix-def}
                  [\bar{\mathbf{H}}_s]_{j,l} = \left\langle \frac{x^{j-1}}{\|x^{j-1}\|_H},\frac{x^{l-1}}{\|x^{l-1}\|_H}\right\rangle_H=\int_{0}^1
                  \frac{x^{j-1}}{\|x^{j-1}\|_H}\frac{x^{l-1}}{\|x^{l-1}\|_H}\dt x.
                \end{equation}
		
		$\bar{\mathbf{H}}_s$ is the Gramian matrix with respect to the inner products of the form
		\eqref{eq.hilbert.inner.product}, of the normalized monomial basis
		$\frac{1}{\|1\|_H},\ldots,\frac{x^{s-1}}{\|x^{s-1}\|_H}$. Therefore, it
		is non-degenerate, and its smallest eigenvalue  $\lambda_{\min}(\bar{\mathbf{H}}_s)$ 
		is bounded from below by a positive constant depending only on $s$. 
		
		\smallskip
		
		On the other hand, since the entries of $\mathbf{E}_s$
                are $\MO\left(\frac{1}{N}\right)$ and $\mathbf{E}_s$
                is Hermitian (but not necessarily PSD), we have that
		\begin{equation}\label{eq.lambda.min.E}
			\lambda_{\min}(\mathbf{E}_s)\ge -\|\mathbf{E}_s\|\geq -\|\mathbf{E}_s\|_F \geq s\MO\left(\frac{1}{N}\right).
		\end{equation}
		
		Using \eqref{eq.lambda.min.E} we set $\Cr{const.limit.matrix.N}(s)$ to be such that for all 
		$N\ge \Cr{const.limit.matrix.N}$,
		$\lambda_{\min}(\mathbf{E}_s)\ge -\frac{\lambda_{\min}(\bar{\mathbf{H}}_s)}{2}$. Furthermore, we increase $\Cr{const.limit.matrix.N}(s)$ to be as least as large as $\Cr{const.limit.matrix.N}(1),\dots, \Cr{const.limit.matrix.N}(s-1)$.
		Then using \eqref{eq.limit.hilbert.plus.error} and Weyl's perturbation inequality, 
		for all $N \ge \Cr{const.limit.matrix.N}$,	  
		$$
			\sigma_{\min}(\tilde{\UU}_N) = \sqrt{\lambda_{\min}\left(\tilde{\UU}_N^H \tilde{\UU}_N\right)}
			\ge \sqrt{\lambda_{\min}\left(\bar{\mathbf{H}}_s\right)+\lambda_{\min}\left(\mathbf{E}\right)}
			\ge \sqrt{\frac{\lambda_{\min}\left(\bar{\mathbf{H}}_s\right)}{2}}.
		$$
	The last claim that $\Xi(s)$ is decreasing follows as deleting the last row and column of  $\HilbM_{s+1}$ gives $\HilbM_s$. Thus, $\lambda_{\min}(\HilbM_{s+1})\leq \lambda_{\min}(\HilbM_s)$ by the minimax principle.
	\end{proof}
	
	\begin{remark} \label{remark.hilbert.asymptotic}
		Clearly, $\sigma_{\min}\left(\UU(\zeta,N,s)\right)$ does not depend on $\zeta$ and we just proved that 
		\[ \sigma_{\min}\left(\UU(\zeta,N,s)\right)
                  \rightarrow \sqrt{\lambda_{\min}(\HilbM_s)}, \quad
                  N\rightarrow \infty. \] Since $\UU(\zeta,N,s)$ is
                injective whenever $N\geq s-1$ (recall Proposition
                \ref{prop:basis-properties}), we could replace
                $\Cr{const.limit.matrix.N}(s)$ by $s-1$.  Then,
                however, we would lose control over $\Xi(s)$. Here
                we can give an asymptotic lower bound for $\Xi(s)$ as
                follows. The asymptotic behavior of
                $\lambda_{\min}(\mathbf{H}_s)$ where $\mathbf{H}_s$ is
                the unnormalized Hilbert matrix is known to be
		\[ \lambda_{\min}(\mathbf{H}_s) \in \Theta\left(\sqrt{s}(1+\sqrt{2})^{-4s} \right),\quad s\rightarrow \infty,  \]
		see \cite{wilf2012finite}, equation (3.35). By normalization we lose at most another factor of $s$, ending up with a lower bound of order $(1+\sqrt{2})^{-2s}/\sqrt{s}$ for $\Xi(s)$.
	\end{remark}

\subsection{Near-orthogonality of the limit spaces}\label{sec.orth.limit.spaces}

\begin{proposition}\label{prop.norm1}
  For any two distinct points $\zeta_1,\zeta_2\in (-\pi,\pi]$ and
  positive integers $s_1,s_2$, there exists a positive constant
  $\Cl{prod.limit.final}(s_1,s_2)$, such
  that for any $N \ge \max (s_1,s_2)-1$:
  \begin{align}\label{eq:limit-inner-prod}
    \left|\langle \zv_1, \zv_2 \rangle \right| & \leq \frac{\Cr{prod.limit.final}}{\Delta\left(\zeta_1,\zeta_2\right)N}; \quad \zv_{\ell} \in \widetilde{\UB}\left(\zeta_{\ell},N,s_{\ell}\right),\;\;\ell=1,2.
  \end{align}
\end{proposition}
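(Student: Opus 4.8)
The plan is to reduce the bound \eqref{eq:limit-inner-prod} to a standard estimate on a weighted geometric sum. Fix $j_1\in\{1,\dots,s_1\}$ and $j_2\in\{1,\dots,s_2\}$, and let $\zv_1=\tilde{\uv}_{j_1}\in\widetilde{\UB}(\zeta_1,N,s_1)$, $\zv_2=\tilde{\uv}_{j_2}\in\widetilde{\UB}(\zeta_2,N,s_2)$. Using the explicit componentwise formula \eqref{eq:ujk-explicit} and the fact that the prefactors $\imath^{j_1-1}$ and $(-\imath)^{j_2-1}$ are unimodular,
\[
\bigl|\langle\zv_1,\zv_2\rangle\bigr|
=\frac{1}{\|\uv_{j_1}\|\,\|\uv_{j_2}\|}
\Bigl|\sum_{k=0}^{N}k^{\,m}e^{\imath k\psi}\Bigr|,
\qquad m:=j_1+j_2-2,\quad \psi:=\zeta_1-\zeta_2 .
\]
By the lower norm bound \eqref{eq.prop.norm.u} the denominator is at least $\Cr{const.norm.l.b}(s_1)\Cr{const.norm.l.b}(s_2)\,N^{\,j_1+j_2-1}$, so it suffices to prove that $\bigl|\sum_{k=0}^N k^m e^{\imath k\psi}\bigr|\le \frac{2\pi}{\Delta(\zeta_1,\zeta_2)}\,N^m$ with an absolute constant.

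This last estimate is the core of the argument, and I would obtain it by summation by parts. Write $S_n:=\sum_{k=0}^n e^{\imath k\psi}$ (with $S_{-1}:=0$). Since $\zeta_1\ne\zeta_2$ we have $\psi\not\equiv 0\pmod{2\pi}$, and the closed form for $S_n$ together with $|e^{\imath\psi}-1|=2\sin\bigl(\Delta(\zeta_1,\zeta_2)/2\bigr)\ge \frac{2}{\pi}\Delta(\zeta_1,\zeta_2)$ (Jordan's inequality, using $\Delta(\zeta_1,\zeta_2)/2\in[0,\pi/2]$) gives the uniform Dirichlet-kernel bound $|S_n|\le \frac{1}{\sin(\Delta(\zeta_1,\zeta_2)/2)}\le \frac{\pi}{\Delta(\zeta_1,\zeta_2)}$ for all $n$. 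Abel's identity then yields
\[
\sum_{k=0}^{N}k^{\,m}e^{\imath k\psi}
= N^{\,m}S_N-\sum_{k=0}^{N-1}\bigl[(k+1)^m-k^m\bigr]S_k ,
\]
and since $m\ge 0$ the differences $(k+1)^m-k^m$ are nonnegative and telescope to $N^m$; estimating every $|S_k|$ by $\pi/\Delta(\zeta_1,\zeta_2)$ gives the claimed bound $\frac{\pi}{\Delta(\zeta_1,\zeta_2)}\bigl(N^m+N^m\bigr)$.

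Combining the two displays, $|\langle\zv_1,\zv_2\rangle|\le \frac{2\pi}{\Cr{const.norm.l.b}(s_1)\Cr{const.norm.l.b}(s_2)\,\Delta(\zeta_1,\zeta_2)\,N}$, uniformly over $j_1\le s_1$ and $j_2\le s_2$, so \eqref{eq:limit-inner-prod} holds with $\Cr{prod.limit.final}(s_1,s_2)=2\pi\bigl((2s_1-1)(2s_2-1)\bigr)^{1/2}$. The one point that genuinely requires care is the oscillatory cancellation: the trivial triangle-inequality bound $\sum_{k=0}^{N}k^m\asymp N^{m+1}$ is off by exactly the factor of $N$ that decides the result, and it is the summation-by-parts step — converting the $\MO(1/\Delta(\zeta_1,\zeta_2))$ boundedness of the partial sums $S_n$ into a gain of one power of $N$ — that supplies it.
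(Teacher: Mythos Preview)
Your proof is correct and follows essentially the same route as the paper: reduce to the oscillatory sum $\sum_{k=0}^N k^{m}e^{\imath k\psi}$, use the lower bound \eqref{eq.prop.norm.u} on $\|\uv_j\|$, and recover the crucial extra factor of $N$ by summation by parts. The only cosmetic difference is that the paper (Lemma~\ref{lem:norm2}) implements the cancellation by multiplying through by $(1-z)$ and telescoping, obtaining $|\sum|\le \frac{2}{|1-z|}N^m$ and hence the slightly sharper constant $\Cr{prod.limit.final}=\pi\sqrt{(2s_1-1)(2s_2-1)}$, whereas your Abel-summation version bounds each partial sum $S_n$ separately and picks up an extra factor of~$2$.
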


\begin{proof}

Consider the following limit vectors (as they appear in Definition
\ref{def:limit.vectors}):
\begin{align*}
  \UB\left(\zeta_{\ell},N,s_{\ell}\right) &= \left\{\uv_j^{(\ell)}\right\}_{j=1}^{s_{\ell}}, & \ell=1,2; \\
  \widetilde{\UB}\left(\zeta_{\ell},N,s_{\ell}\right) &= \left\{\tilde{\uv}_j^{(\ell)}\right\}_{j=1}^{s_{\ell}}, & \ell=1,2.
\end{align*}

Consider an arbitrary any pair of normalized limit vectors
$\zv_1=\tilde{\uv}^{(1)}_p$ and
$\zv_2=\tilde{\uv}^{(2)}_q$, $p=1,\ldots,s_1$,
$q=1,\ldots,s_2$. By \eqref{eq.prop.norm.u} we have:
    \begin{align}\label{eq.prop.norm.u.p.q}
      \begin{split}
        \frac{1}{\sqrt{2s_1-1}}N^{p-\frac{1}{2}} \le & \|\uv^{(1)}_p\| \le  N^{p-\frac{1}{2}},\\
        \frac{1}{\sqrt{2s_2-1}}N^{q-\frac{1}{2}} \le & \|\uv^{(2)}_q\|
        \le N^{q-\frac{1}{2}}.
      \end{split}
    \end{align}

      By \eqref{eq:ujk-explicit} we have
$$
	\langle\uv^{(1)}_p,\uv^{(2)}_q\rangle = \imath^{p+q-2}(-1)^{q-1}  \sum_{k=0}^N k^{p+q-2}z^k,
$$
with $z=e^{\imath(\zeta_1-\zeta_2)}$.  Notice that since
$\zeta_1 \ne \zeta_2$, we have $z \ne 1$. Now using Lemma
\ref{lem:norm2} we get that
\begin{equation}\label{eq.innet.prod.bound}
		\left|\langle \uv^{(1)}_p,\uv^{(2)}_q\rangle \right| = 	\left|\sum_{k=0}^N k^{p+q-2}z^k\right|\le \frac{2}{|1-z|}N^{p+q-2}.
\end{equation}
Finally we have that 
$$|1-z| = |1-e^{\imath(\zeta_1-\zeta_2)}| \ge \frac{2}{\pi}\Delta(\zeta_1,\zeta_2),$$
which together with \eqref{eq.innet.prod.bound} gives that 
\begin{equation}\label{eq:norm1}
\left|\langle \uv^{(1)}_p,\uv^{(2)}_q\rangle \right|\le \frac{\pi}{\Delta(\zeta_1,\zeta_2)}N^{p+q-2}.
\end{equation}

Combining \eqref{eq.prop.norm.u.p.q} with \eqref{eq:norm1}
we obtain:
\begin{equation*}
	\left| \langle \zv_1, \zv_2 \rangle \right| = \frac{|\langle \uv^{(1)}_p,\uv^{(2)}_q\rangle|}{\|\uv^{(1)}_p\| \|\uv^{(2)}_q\|}\le 
	\frac{\sqrt{(2s_1-1)(2s_2-1)}\pi N^{p+q-2}}{
	N^{p-\frac{1}{2}}N^{q-\frac{1}{2}}\Delta(\zeta_1,\zeta_2)},
    \end{equation*}
    finishing the proof with $\Cr{prod.limit.final}(s_1,s_2)=\pi\sqrt{(2s_1-1)(2s_2-1)}$.
\end{proof}
	
\subsection{Proof of Theorem
  \ref{thm.orth.spaces}}\label{sec.orth.cluster.spaces}

		Let $\xv, s_1, h^{(1)},\yv, s_2, h^{(2)}, \theta, h$ be as stated in Theorem \ref{thm.orth.spaces}. 
		Let $N$ be such that $\Cr{const.low} \le N \le \frac{\Cr{const.high}}{h}$, where
		$\Cr{const.low}=\Cr{const.low}(s_1,s_2),\;\Cr{const.high}=\Cr{const.high}(s_1,s_2)$,
		will be specified within the proof.
		
		\smallskip 
		
		Let $\vv \in L_1 = L(\xv,N)$ and
		$\uv \in L_2 = L(\yv,N)$ be two unit vectors. We will show that
		\begin{equation}\label{eq.oth.main}
			|\langle\vv,\uv\rangle | \le \frac{\Cl{inner.prod.clusters.N}(s_1,s_2)}{N \theta} + 
	    	\Cl{inner.prod.clusters.srf}(s_1,s_2)Nh,		
		\end{equation} 
		where
                $\Cr{inner.prod.clusters.N}=\Cr{inner.prod.clusters.N}(s_1,s_2)$
                and
                $\Cr{inner.prod.clusters.srf}=\Cr{inner.prod.clusters.srf}(s_1,s_2)$
                will be specified during the proof. Now, \eqref{eq.oth.main}
                implies that
	    $$\angle_{\min}(L_1,L_2)\ge \frac{\pi}{2} - \frac{\pi}{2}\frac{\Cr{inner.prod.clusters.N}(s_1,s_2)}{N \theta} -
	    	\frac{\pi}{2}\Cr{inner.prod.clusters.srf}(s_1,s_2)Nh,
	    $$
	    thus proving \eqref{eq.thm.angle} with $\Cr{subsapce.angle.N} = \frac{\pi}{2}\Cr{inner.prod.clusters.N}$ and 
	    $\Cr{subsapce.angle.srf} = \frac{\pi}{2}\Cr{inner.prod.clusters.srf}$. 
		
            \smallskip Recalling Definitions
            \ref{def.divided.diff.basis} and \ref{def:limit.vectors},
            let
            \begin{align*}
              \widetilde{\UB}(x_1,N,s_1)&:=\left\{\tilde{\uv}^{(1)}_1,\ldots,\tilde{\uv}^{(1)}_{s_1}\right\},
              & \UU_1&:=[\tilde{\uv}^{(1)}_1,\ldots,\tilde{\uv}^{(1)}_{s_1}]; \\
              \widetilde{\UB}(y_1,N,s_2)&:=\left\{\tilde{\uv}^{(2)}_1,\ldots,\tilde{\uv}^{(2)}_{s_2}\right\},
              & \UU_2&:=[\tilde{\uv}^{(2)}_1,\ldots,\tilde{\uv}^{(2)}_{s_2}]; \\
              \widetilde{\WB}(\xv)&:=\left\{\tilde{\wv}^{(1)}_1,\ldots,\tilde{\wv}^{(1)}_{s_1}\right\},
              & \WW_1&:=[\tilde{\wv}^{(1)}_1,\ldots,\tilde{\wv}^{(1)}_{s_1}]; \\
              \widetilde{\WB}(\yv)&:=\left\{\tilde{\wv}^{(2)}_1,\ldots,\tilde{\wv}^{(2)}_{s_2}\right\},
              & \WW_2&:=[\tilde{\wv}^{(2)}_1,\ldots,\tilde{\wv}^{(2)}_{s_2}].
            \end{align*}
            Furthermore, put
            \begin{equation}\label{eq.W1.W2.as.pretubation.of.U1.U2}
              \WW_1 = \UU_1 + \EE_1,\tab \WW_2 = \UU_2 + \EE_2.
            \end{equation}
            Then by \eqref{eq:diff.before.limit}
            and equation \eqref{eq.matnorm},
            \begin{equation}\label{eq.norm.error}
              \|\EE_j\| \le
              \|\EE_j\|_F\leq \sqrt{s} \Cr{cnst.divided.derivative.diff} Nh, \quad
              j\in\{1,2\}
            \end{equation}
            for $s = \max(s_1,s_2)$.  In addition, since the columns
            of $\UU_1$ and $\UU_2$ have unit length, we also have that
            \begin{equation}\label{eq.norm.U}
              \|\UU_j\| \leq
              \|\UU_j\|_F \le \sqrt{s},\quad j\in\{1,2\}.
            \end{equation}
            We now represent $\vv$ using the basis
            $\WW_1$ and $\uv$ using the basis $\WW_2$ as follows:
            $$
            \vv = \WW_1 \av, \tab \uv = \WW_2 \bv.
            $$  
            Then using \eqref{eq.norm.error} and \eqref{eq.norm.U},
            and assuming $Nh<1$, we get that
            \begin{align}\label{eq.inner.prod.bound}
              \begin{split}
                \left|\langle \vv,\uv\rangle \right| &=
                \left|\bv^H \WW_2^H \WW_1 \av\right|\\
                &= \left| \bv^H (\UU_2 +
                  \EE_2)^H(\UU_1 + \EE_1) \av\right|\\
                & \le |\bv^H\UU_2^H \UU_1 \av| + |\bv^H \UU_2^H\EE_1
                \av|+|\bv^H \EE_2^H\UU_1 \av| + |\bv^H
                \EE_2^H\EE_1 \av|\\
                & \le |\bv^H\UU_2^H \UU_1 \av| + \|\bv\|\|\av\| s
                \Cr{cnst.divided.derivative.diff} Nh + \|\bv\|\|\av\|
                s \Cr{cnst.divided.derivative.diff} Nh +
                \|\bv\|\|\av\| s
                \Cr{cnst.divided.derivative.diff}^2 (Nh)^2\\
                &\le |\bv^H\UU_2^H \UU_1 \av| +
                \Cl{orth.intermidiate.1}(s) \|\bv\|\|\av\| N h.
              \end{split}
            \end{align}

		By Proposition \ref{prop.sing.limit} we have that
		\begin{equation}\label{eq.UU1.UU2}
\sigma_{\min}(\UU_1)\ge \Xi(s), \tab \sigma_{\min}(\UU_2)\ge \Xi(s),
\end{equation} provided $N \ge
\Cr{const.limit.matrix.N}(s)$, where $\Xi$ and
$\Cr{const.limit.matrix.N}$ are the constants defined\footnote{Here we used the fact the
  $\Cr{const.limit.matrix.N}(s)$ is increasing in $s$ and $\Xi(s)$ is
  decreasing in $s$ (see Proposition \ref{prop.sing.limit}).} in Proposition
\ref{prop.sing.limit}.

Assume that $N$ and $h$ satisfy
\begin{equation}\label{eq.srf.small.2}
  N h \le
  \min\left(1,\frac{\Xi(s)}{2\sqrt{s}
    \Cr{cnst.divided.derivative.diff}}\right)=\Cr{const.high}(s).
\end{equation}

		Then, using \eqref{eq.W1.W2.as.pretubation.of.U1.U2},
\eqref{eq.norm.error}, \eqref{eq.srf.small.2} and \eqref{eq.UU1.UU2},
and applying the standard singular value perturbation bound, we get
		\begin{align}\label{eq.bound.WW1.WW2}
			\begin{split}
\sigma_{\min}(\WW_1)&=\sigma_{\min}(\UU_1 + \EE_1)\ge
\sigma_{\min}(\UU_1) - \|\EE_1\| \ge \frac{\Xi(s)}{2},\\
\sigma_{\min}(\WW_2)&=\sigma_{\min}(\UU_2 + \EE_2)\ge
\sigma_{\min}(\UU_2) - \|\EE_2\| \ge \frac{\Xi(s)}{2}.
			\end{split}
		\end{align}

		By \eqref{eq.bound.WW1.WW2} we conclude that
		\begin{align}\label{eq.orth.norm.a.b}
                  \begin{split}
                    \|\av\| &\le
                  \|\WW_1^{\dagger}\| \|\vv\| \leq 2\Xi^{-1}(s), \\
                  \|\bv\| &\le \|\WW_2^{\dagger}\| \|\uv\|  \leq 2\Xi^{-1}(s).
                \end{split}
		\end{align}
		
		\smallskip
		
		Now combining \eqref{eq.inner.prod.bound},
                \eqref{eq.orth.norm.a.b} and
                \eqref{eq:limit-inner-prod}, we get that
\begin{align*}
  \left|\langle\vv,\uv\rangle\right| &\le |\bv^H\UU_2^H \UU_1 \av|
                                       +4\Cr{orth.intermidiate.1}\Xi^{-2}(s) Nh \\
                                     &\le \frac{4 s \Xi^{-2}(s)\Cr{prod.limit.final}(s_1,s_2)}{N \theta}
                                       + 4\Cr{orth.intermidiate.1}\Xi^{-2}(s)Nh,
\end{align*}
which proves \eqref{eq.oth.main} with
$\Cr{inner.prod.clusters.N} = 4 s\Xi^{-2}(s) \Cr{prod.limit.final}(s_1,s_2)$, and
$\Cr{inner.prod.clusters.srf} = 4\Cr{orth.intermidiate.1}\Xi^{-2}(s)$.
	    
Collecting the assumptions we have made along the proof, regarding the
range of $N$ for which the intermediate claims hold, we required:
\begin{itemize}
\item $N\le \frac{\Cr{const.high}}{h}$, this assumption was used in
\eqref{eq.srf.small.2} in order to establish \eqref{eq.orth.norm.a.b}
and \eqref{eq.inner.prod.bound}.
\item $N \ge \Cr{const.limit.matrix.N}(s)$, this assumption was used to
  establish \eqref{eq.UU1.UU2}.
\end{itemize}

Therefore, we have proved Theorem \ref{thm.orth.spaces} with $\Cr{const.high}$, $\Cr{subsapce.angle.srf}$  and
$\Cr{subsapce.angle.N}$ as above and
with $\Cr{const.low} = \Cr{const.limit.matrix.N}(s)$. \qed

\section{Proof of Theorem \ref{thm:single.cl}}\label{sec:single.cluster}

Given nodes $\xv=\left\{x_1,\dots,x_s\right\}$ and $N=2M$ where
$M$ is an integer, define
\begin{equation*}
  \VVV_N(\xv):=\frac{1}{\sqrt{N}} \VV_N
\times \diag \left\{e^{-\imath M x_j}\right\}_{j=1,\dots,s} =
\frac{1}{\sqrt{2M}} \left[\exp\left(\imath k
x_j\right)\right]^{j=1,\dots,s}_{k=-M,\dots,M}.
\end{equation*}

  Also, let
  $$
  \GG_N=\GG_N(\xv) := \VVV_N(\xv)^H \VVV_N(\xv) = {1\over{2M}} \left[
\mathcal{D}_M \left(x_i-x_j\right)\right]_{i,j},
  $$
  where $\mathcal{D}_M$ is the Dirichlet kernel of order $M$:
  $$
  \mathcal{D}_M(t):=\sum_{k=-M}^M\exp(\imath k t) = \begin{cases}
\frac{\sin\left(\left(M+{1\over 2}\right)t\right)}{\sin{t\over 2}} & t
\notin 2\pi\mathbb{Z}, \\ 2M+1 & \text{else.}
  \end{cases}
  $$
Therefore,
  \begin{equation}\label{eq:sing.single.via.eigs}
  \sigma_j\left(\VV_N\left(\xv\right)\right) = N^{{1\over 2}}
\sigma_j\left(\VVV_N\left(\xv\right)\right) = \sqrt{N \lambda_j
\left(\GG_N\right)},\quad j=1,\dots,s.
\end{equation}

Put $\varepsilon:=Mh=\frac{Nh}{2}$. Further, put
$\left\{y_1,\dots,y_s\right\}=\yv := {1\over h}\xv$. Then we have
\begin{equation*}
  \GG_N = {1\over {2M}}\left[ \mathcal{D}_M
    \left(\varepsilon\frac{\left(y_i-y_j\right)}{M}\right)\right]_{1\leq i,j \leq s},
\end{equation*}
where $\tau \leq |y_i-y_j| \leq 1$ for $i\neq j$.

The following is essentially a variation of \cite[Theorem
8]{wathen2015}, suitable for our setting.

Denote by $\DD=\DD(\yv)$ the distance matrix
$\DD = \left[y_i-y_j\right]_{i,j}$, and
$\DD^k=\left[\left(y_i-y_j\right)^k\right]_{i,j}$ the element-wise
powers of $\DD$.

Next, define for $m=0,1,\dots,s-1$ the $(m+1)\times s$ Vandermonde
matrices
$$
\PP_m = \PP_m(\yv) = \left[y_j^k\right]_{k=0,\dots,m}^{j=1,\dots,s}.
$$
Since the elements of $\{y_j\}$ are pairwise different, the matrices $\PP_m$
have full row rank (equal to $m+1$). Thus, $\dim\ker\PP_m = s-1-m$, and
furthermore, with $\PP_{-1}:=\boldsymbol{0}^H$,
$$
\left\{\boldsymbol{0}\right\} = \ker \PP_{s-1} \subset \ker \PP_{s-2} \subset \dots \subset \ker \PP_{0} \subset \ker \PP_{-1} \equiv \RR^s.
$$

The following key result is precisely the well-known Micchelli lemma.

\begin{lemma}[Lemma 3.1 in \cite{micchelli1986}]\label{lemma.micchelli}
  Let $m=0,1,\dots,s-1$. If $\av \in \ker \PP_{m-1}$ then
  \begin{equation}\label{eq:micchelli}
    (-1)^m \av^H \DD^{2m} \av \geq 0,
  \end{equation}
while equality holds if and only if $\av \in \ker \PP_{m}$.
\end{lemma}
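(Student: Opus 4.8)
The plan is to prove the statement by induction on $m$, exploiting the nested chain $\ker\PP_{s-1}\subset\cdots\subset\ker\PP_{-1}=\RR^s$ together with a convolution/integral representation of the quadratic form $\av^H\DD^{2m}\av$. The key analytic input is that for a real vector $\av=(a_1,\dots,a_s)$ the signed measure $\mu=\sum_j a_j\delta_{y_j}$ has a Fourier transform $\widehat\mu(\xi)=\sum_j a_j e^{-\imath\xi y_j}$, and when $\av\in\ker\PP_{m-1}$ all moments of $\mu$ up to order $m-1$ vanish, i.e.\ $\widehat\mu^{(\ell)}(0)=0$ for $0\le\ell\le m-1$, so $\widehat\mu(\xi)=\MO(\xi^m)$ near $0$.

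First I would record the identity
\begin{equation*}
  (-1)^m\,\av^H\DD^{2m}\av = (-1)^m\sum_{i,j} a_i a_j (y_i-y_j)^{2m}.
\end{equation*}
The plan is to recognize the right-hand side as (a constant multiple of) a value of a positive-type expression. Concretely, since $(y_i-y_j)^{2m} = (-1)^m\frac{\dt^{2m}}{\dt\xi^{2m}} e^{\imath\xi(y_i-y_j)}\big|_{\xi=0}$, we get
\begin{equation*}
  (-1)^m \av^H\DD^{2m}\av = \frac{\dt^{2m}}{\dt\xi^{2m}}\Bigl|\widehat\mu(\xi)\Bigr|^2\Big|_{\xi=0},
\end{equation*}
where $\widehat\mu(\xi)=\sum_j a_j e^{\imath\xi y_j}$. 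Now Taylor-expand $|\widehat\mu(\xi)|^2$ at $\xi=0$: because $\widehat\mu(\xi)=\sum_{\ell\ge m} c_\ell \xi^\ell$ with $c_m = \frac{\imath^m}{m!}\sum_j a_j y_j^m$ (the first possibly nonzero coefficient, using $\av\in\ker\PP_{m-1}$), the function $|\widehat\mu(\xi)|^2$ vanishes to order $2m$ and its $2m$-th derivative at $0$ equals $(2m)!\,|c_m|^2\ge 0$. This proves \eqref{eq:micchelli}. Moreover equality holds iff $c_m=0$, i.e.\ iff $\sum_j a_j y_j^m=0$, which combined with $\av\in\ker\PP_{m-1}$ is exactly $\av\in\ker\PP_m$.

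The main obstacle — really the only subtlety — is making the ``Taylor expansion of $|\widehat\mu|^2$'' argument rigorous and self-contained without invoking heavy machinery; one must be careful that $\widehat\mu$ is entire (a finite exponential sum, so this is automatic) and that differentiating $2m$ times and setting $\xi=0$ genuinely isolates $|c_m|^2$ and nothing else, which follows because all terms $\xi^{a}\bar\xi^{b}$ appearing in $|\widehat\mu|^2=\widehat\mu\,\overline{\widehat\mu}$ have $a,b\ge m$ hence total degree $\ge 2m$, with the unique degree-$2m$ term being $|c_m|^2\xi^m\bar\xi^m = |c_m|^2\xi^{2m}$ (real $\xi$). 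As an alternative that avoids even this, one can run the induction directly: for $\av\in\ker\PP_{m-1}\setminus\ker\PP_m$ write $(y_i-y_j)^{2m}=(y_i-y_j)^{2m-2}\bigl(y_i^2-2y_iy_j+y_j^2\bigr)$, use $\av\in\ker\PP_{m-1}$ to kill the cross-terms involving $\av^H\DD^{2m-2}(\mathrm{diag}\,y)\,\av$ down to the single term $-2(\sum y_i^{m-?})\cdots$, and reduce to the inductive hypothesis for $\DD^{2m-2}$; but the Fourier route above is cleaner and I would present that one.
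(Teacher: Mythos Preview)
Your proof is correct. The identity $(-1)^m\av^H\DD^{2m}\av = \frac{\dt^{2m}}{\dt\xi^{2m}}|\widehat\mu(\xi)|^2\big|_{\xi=0}$ holds for real $\av$ (which is the relevant setting, since the paper takes $\ker\PP_{-1}=\RR^s$), and the Taylor argument isolating $(2m)!\,|c_m|^2$ is valid: the only pair $a,b\ge m$ with $a+b=2m$ is $a=b=m$, so the degree-$2m$ coefficient of $|\widehat\mu(\xi)|^2$ is exactly $|c_m|^2$. The equality characterisation $c_m=0\iff \sum_j a_j y_j^m=0\iff \av\in\ker\PP_m$ is immediate.

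The paper, however, does not prove this lemma at all: it is quoted as a known result (``Lemma 3.1 in \cite{micchelli1986}'') and used as a black box. So there is no in-paper argument to compare against; your Fourier-analytic computation supplies a clean self-contained substitute for the citation. Two cosmetic remarks: you flip the sign in the exponent of $\widehat\mu$ between the opening paragraph and the displayed computation --- harmless since only $|\widehat\mu|^2$ enters, but worth reconciling; and the ``alternative induction'' sketch at the end is too fragmentary to evaluate (the step ``kill the cross-terms'' is not spelled out) and is best dropped in favour of the Fourier route you already have.
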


\begin{corollary}
  Let $m=0,1,\dots,s-1$. For each $\av\in\ker\PP_{m-1}$ and $\bv\in\ker\PP_{m}$ we have
  \begin{equation}
    \label{eq:zero-on-kerPPm}
    \av^H\DD^{2m}\bv=0.
  \end{equation}
\end{corollary}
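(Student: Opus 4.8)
The plan is to obtain the identity as a direct consequence of Micchelli's lemma (Lemma \ref{lemma.micchelli}) by polarization, using the fact that $\av\mapsto(-1)^m\av^H\DD^{2m}\av$ is a positive semidefinite quadratic form on the subspace $\ker\PP_{m-1}$ whose radical is exactly $\ker\PP_m$. Since the matrix $\DD^{2m}$ is real symmetric and the kernels $\ker\PP_{m-1}$, $\ker\PP_m$ live in $\RR^s$, the associated bilinear form $B(\uv,\vv):=(-1)^m\uv^H\DD^{2m}\vv$ is symmetric on $\ker\PP_{m-1}$, and Lemma \ref{lemma.micchelli} says precisely that $B(\uv,\uv)\ge 0$ there, with $B(\uv,\uv)=0$ iff $\uv\in\ker\PP_m$.

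First I would record that $\ker\PP_m\subset\ker\PP_{m-1}$ (this is part of the nesting chain displayed just before Lemma \ref{lemma.micchelli}), so both $\av$ and $\bv$, and hence $\av+t\bv$ for every $t\in\RR$, belong to $\ker\PP_{m-1}$. Applying \eqref{eq:micchelli} to the vector $\av+t\bv$ then gives, for all $t\in\RR$,
\begin{equation*}
0\le (-1)^m(\av+t\bv)^H\DD^{2m}(\av+t\bv) = (-1)^m\av^H\DD^{2m}\av + 2t\,(-1)^m\av^H\DD^{2m}\bv + t^2(-1)^m\bv^H\DD^{2m}\bv.
\end{equation*}
By the equality case of Lemma \ref{lemma.micchelli}, the hypothesis $\bv\in\ker\PP_m$ forces $(-1)^m\bv^H\DD^{2m}\bv=0$, so the right-hand side reduces to the affine function $t\mapsto (-1)^m\av^H\DD^{2m}\av + 2t\,(-1)^m\av^H\DD^{2m}\bv$, which is nonnegative for every real $t$. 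An affine function of $t$ that is nonnegative on all of $\RR$ must have zero slope, hence $(-1)^m\av^H\DD^{2m}\bv=0$, i.e. $\av^H\DD^{2m}\bv=0$, which is exactly \eqref{eq:zero-on-kerPPm}.

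I do not expect any real obstacle here; this is the standard Cauchy–Schwarz/polarization phenomenon for semidefinite forms. The only two points that need a moment's attention are that the polarization parameter $t$ should be taken real (consistent with the kernels sitting inside $\RR^s$), and that one uses \emph{both} directions of Micchelli's lemma: the inequality to get semidefiniteness of $B$ on $\ker\PP_{m-1}$, and the equality characterization to kill the quadratic term $\bv^H\DD^{2m}\bv$ because $\bv\in\ker\PP_m$.
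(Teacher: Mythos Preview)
Your argument is correct and is essentially the same as the paper's: both use Micchelli's lemma to see that $(-1)^m\DD^{2m}$ is positive semidefinite on $\ker\PP_{m-1}$ and that $\bv$ lies in its radical there. The paper phrases this via the standard implication ``$\tilde{\DD}$ PSD and $\bv^H\tilde{\DD}\bv=0\Rightarrow\tilde{\DD}\bv=0$'' after projecting onto $\ker\PP_{m-1}$, whereas you simply unpack that implication by the affine-in-$t$ polarization; the content is identical.
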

\begin{proof}
  By Lemma \ref{lemma.micchelli} we have that $(-1)^m \DD^{2m}$ is
  positive semi-definite on $\ker\PP_{m-1}$, i.e., if $\PM$ is the
  orthogonal projection matrix onto $\ker\PP_{m-1}$ then
  $\tilde\DD=\PM^H (-1)^m \DD^{2m}\PM$ is positive
  semi-definite. Invoking Lemma \ref{lemma.micchelli} again we obtain
  $\bv^H\tilde{\DD}\bv=0$, implying $\tilde{\DD}\bv=0$.
\end{proof}

It can be readily checked that the Taylor
expansion of the normalized Dirichlet kernel at the origin is
$$
{1\over {2M}}\mathcal{D}_M\left({t\over M}\right) = \sum_{k=0}^\infty
(-1)^{k} \frac{F(M,k)}{(2k)!} t^{2k}, \quad F(M,k):={1\over {2
    M^{2k+1}}}\sum_{m=-M}^M m^{2k}.
$$

Note that by \eqref{eq.integer.power.sum} we have
\begin{equation}\label{eq:fmk-estimate-faulh}
F(M,k) \in \left[\frac1{2k+1}, 1\right].
\end{equation}

We get
\begin{equation*}
	\av^H \GG_N \av = \sum_{k=0}^\infty
	(-1)^{k} \varepsilon^{2k}\frac{F(M,k)}{(2k)!}  \av^H \DD^{2k}\av.
\end{equation*}
Applying the Courant-Fischer minimax principle (see e.g. Theorem 4.2.6 in \cite{horn_matrix_2012}), we get for $m~\in~\{0,1,\dots,s-1\}$
\begin{equation*}
\begin{aligned}
	\lambda_{m+1}(\GG_N) &=  \min_{L:\dim L = s-m}
	\; \;\max_{\|\vv\|=1,\;\vv\in L} \vv^H \GG_N\vv \\&\leq  \max_{\|\av\|=1,\;\av\in \ker \PP_{m-1}} \av^H \GG_N\av \leq \max_{\|\av\|=1,\;\av\in \ker \PP_{m-1}} \sum_{k=m}^{\infty}
	\frac{\varepsilon^{2k}}{(2k)!}\left| \av^H \DD^{2k} \av \right|.
	\end{aligned}
\end{equation*}

For every $k\in\NN$, the entries of $\DD^{2k}$ are bounded from above
by 1, therefore equation \eqref{eq.matnorm} yields 
\[
|\av^H \DD^{2k} \av|\leq \|\DD^{2k}\|\leq \|\DD^{2k}\|_F \leq s.
\] 
Now suppose that $\varepsilon < 1$, then clearly
\begin{equation} \label{eq:eig.upper.final}
\lambda_{m+1}\left(\GG_N\right) \leq \max_{\|\av\|=1,\;\av\in \ker \PP_{m-1}} \sum_{k=m}^{\infty} \frac{\varepsilon^{2k}}{(2k)!} |\av^H \DD^{2k} \av|
\leq se\varepsilon^{2m}. 
\end{equation}

Let $\ker \PP_{m-1} = \ker \PP_{m} \oplus \MS_m$ (i.e. $\MS_m$ is the
orthogonal complement of $\ker \PP_{m}$ in $\ker \PP_{m-1}$). Clearly
$\dim \MS_m=1$. Let $\mathcal{Q}_m= \oplus_{k=0}^m \MS_k$.
Using the minimax principle once again yields
\begin{align}\label{eq:lower.bound.eig}
\notag \lambda_{m+1}(\GG_N) &= \max_{L:
	\dim L = m+1} \; \;\min_{\|\vv\|=1,\vv\in L} \vv^H \GG_N \vv \\
&\geq \min_{\av \in \mathcal{Q}_m,\;\|\av\|=1} \av^H \GG_N \av \notag  =: \mu_{m+1}^\varepsilon.
\end{align}

\begin{proposition}
  There exists a constant $\Cl{uniform.lower.eigen}=
  \Cr{uniform.lower.eigen}(\tau, m, s)$ such that
  \begin{equation}
    \label{eq:asymptotic-lb}
    \liminf_{\varepsilon\rightarrow 0}\varepsilon^{-2m}\mu_{m+1}^\varepsilon  \geq \Cr{uniform.lower.eigen} > 0.
  \end{equation}
\end{proposition}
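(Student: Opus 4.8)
The plan is to work with the compression of $\GG_N$ to $\mathcal{Q}_m$ in a good orthonormal basis, exhibit a ``graded'' structure, rescale symmetrically, and pass to the limit $\varepsilon\to0$. First I would fix unit vectors $\uv_0,\dots,\uv_m$ spanning the one-dimensional, mutually orthogonal subspaces $\MS_0,\dots,\MS_m$, and set $\UU:=[\uv_0,\dots,\uv_m]$ (orthonormal columns), so that $\mu_{m+1}^\varepsilon=\lambda_{\min}(\AAA)$ with $\AAA:=\UU^H\GG_N\UU$. For $r\geq0$ put $\vec{p}_r:=(y_1^r,\dots,y_s^r)^T\in\RR^s$ and $\psi_r(\vv):=\vec{p}_r^T\vv$; then $\vv\in\ker\PP_{r-1}$ iff $\psi_0(\vv)=\dots=\psi_{r-1}(\vv)=0$, so from $\MS_k\subseteq\ker\PP_{k-1}$ and $\MS_k\cap\ker\PP_k=\{0\}$ we get $\psi_r(\uv_k)=0$ for $r<k$ and $\psi_k(\uv_k)\neq0$. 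Writing $\DD^{2p}=\sum_{r=0}^{2p}\binom{2p}{r}(-1)^r\vec{p}_r\vec{p}_{2p-r}^T$ and using these vanishing moments on both sides gives $\uv_k^H\DD^{2p}\uv_l=0$ whenever $2p<k+l$, and $\uv_k^H\DD^{k+l}\uv_l=\binom{k+l}{k}(-1)^k\psi_k(\uv_k)\psi_l(\uv_l)$ when $k+l$ is even. Substituting into $\GG_N=\sum_{p\geq0}(-1)^p\varepsilon^{2p}\frac{F(M,p)}{(2p)!}\DD^{2p}$, and bounding $0<F(M,p)\leq1$, $\|\DD^{2p}\|\leq\|\DD^{2p}\|_F\leq s$ (entries of $\DD^{2p}$ lie in $[-1,1]$), one obtains $|\AAA_{kl}|\leq c_1(s)\,\varepsilon^{k+l}$ and
\begin{equation*}
\varepsilon^{-(k+l)}\AAA_{kl}=\BBB_{kl}+\MO(\varepsilon),\qquad
\BBB_{kl}:=\begin{cases}(-1)^{(k+l)/2+k}\dfrac{F\bigl(M,\tfrac{k+l}{2}\bigr)}{k!\,l!}\,\psi_k(\uv_k)\psi_l(\uv_l),&k+l\text{ even},\\[2mm]0,&k+l\text{ odd},\end{cases}
\end{equation*}
the error term being uniform over the admissible node sets and over $M$.

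Next I would show $\BBB\succ0$. Since $\psi_k(\uv_k)\neq0$, conjugating $\BBB$ by the invertible real diagonal matrices $\diag(k!)$, $\diag\bigl((-1)^{\lfloor k/2\rfloor}\bigr)$ and $\diag\bigl(\psi_k(\uv_k)\bigr)$ (using $(-1)^{(k+l)/2+k}=(-1)^{\lfloor k/2\rfloor}(-1)^{\lfloor l/2\rfloor}$ for $k+l$ even) yields the block-diagonal matrix with blocks $[F(M,a+b)]_{0\leq a,b\leq\lfloor m/2\rfloor}$ and $[F(M,a+b+1)]_{0\leq a,b\leq\lceil m/2\rceil-1}$. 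Because $F(M,j)=\int v^{2j}\,\dt\mu_M(v)$ for the measure $\mu_M=\frac1{2M}\sum_{\kappa=-M}^{M}\delta_{\kappa/M}$ on $[-1,1]$, these are Gram matrices of $\{v^{2a}\}_a$, resp. $\{v^{2a+1}\}_a$, in $L^2(\mu_M)$; as $\mu_M$ is supported on more than $\lfloor m/2\rfloor$ (resp. $\lceil m/2\rceil$) points — which holds since $N=2M\geq s>m$ — these systems are linearly independent, hence the blocks, and so $\BBB$, are positive definite. Moreover, after quotienting by translations the set of admissible configurations is compact; on it $\MS_k$, $\uv_k$, $\psi_k(\uv_k)$ depend continuously on the nodes with $\psi_k(\uv_k)\neq0$ throughout, and $\BBB=\BBB(M)\to\BBB(\infty)\succ0$ as $M\to\infty$ while every $\BBB(M)\succ0$; hence $\lambda_{\min}(\BBB)\geq c_0>0$ with $c_0=c_0(\tau,m,s)$, uniformly.

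To finish, I would write $\AAA=\mathbf{S}\widetilde{\AAA}\mathbf{S}$ with $\mathbf{S}=\diag(1,\varepsilon,\dots,\varepsilon^m)$ and $\widetilde{\AAA}=\BBB+\EE$, $\|\EE\|\leq\|\EE\|_F\leq c_2(m,s)\,\varepsilon$; by Weyl's inequality $\lambda_{\min}(\widetilde{\AAA})\geq c_0-c_2(m,s)\varepsilon\geq c_0/2$ once $\varepsilon\leq\varepsilon_0(\tau,m,s)$, so for any unit $\av\in\CC^{m+1}$,
\begin{equation*}
\av^H\AAA\av=(\mathbf{S}\av)^H\widetilde{\AAA}(\mathbf{S}\av)\geq\tfrac{c_0}{2}\|\mathbf{S}\av\|^2\geq\tfrac{c_0}{2}\,\varepsilon^{2m}
\end{equation*}
(using $\varepsilon<1$), hence $\mu_{m+1}^\varepsilon=\lambda_{\min}(\AAA)\geq\tfrac{c_0}{2}\varepsilon^{2m}$ for all small $\varepsilon$, which gives the claim with $\Cr{uniform.lower.eigen}=c_0/2$. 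The hard part is the graded bound $\AAA_{kl}=\MO(\varepsilon^{k+l})$ — rather than the weaker $\MO(\varepsilon^{2\min(k,l)})$ one might first expect — since this is exactly what keeps $\mathbf{S}^{-1}\AAA\mathbf{S}^{-1}$ bounded and its limit $\BBB$ nondegenerate; it relies on the vanishing moments of \emph{both} $\uv_k$ and $\uv_l$, which force $2p\geq k+l$ in the expansion of $\uv_k^H\DD^{2p}\uv_l$.
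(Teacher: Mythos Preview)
Your proof is correct and takes a genuinely different route from the paper's.

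\textbf{What the paper does.} The paper first bounds the quadratic form on the one-dimensional space $\MS_m$ alone, obtaining
$\rho_{m+1}^{\varepsilon}:=\min_{\av\in\MS_m,\|\av\|=1}\av^H\GG_N\av \geq \varepsilon^{2m}\bigl(\tfrac{\Cr{uniform.quadr.form}}{(2m+1)!}-se\varepsilon^2\bigr)$
directly from Micchelli's lemma and a compactness argument over the node set. It then argues, by extracting a convergent subsequence of minimizers $\av^{\varepsilon}\in\mathcal{Q}_m$ and deriving a contradiction from \eqref{eq:zero-on-kerPPm} and the finiteness of the limit \eqref{eq.mu.gn}, that the limiting minimizer must in fact lie in $\MS_m$; hence $\liminf_{\varepsilon\to0}\varepsilon^{-2m}\mu_{m+1}^{\varepsilon}=\liminf_{\varepsilon\to0}\varepsilon^{-2m}\rho_{m+1}^{\varepsilon}$.

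\textbf{What you do.} You compress $\GG_N$ to $\mathcal{Q}_m$ in the orthonormal basis $\{\uv_k\in\MS_k\}$, expand $\DD^{2p}$ as a sum of rank-one terms $\vec p_r\vec p_{2p-r}^T$, and use the vanishing moments $\psi_r(\uv_k)=0$ for $r<k$ on \emph{both} sides to obtain the graded estimate $\AAA_{kl}=\MO(\varepsilon^{k+l})$. This is strictly sharper than what Micchelli's lemma alone gives (namely $\MO(\varepsilon^{2\min(k,l)})$) and is exactly what makes the symmetric rescaling $\AAA=\mathbf{S}\widetilde{\AAA}\mathbf{S}$ bounded. You then identify the limit $\BBB$ and prove it is positive definite by recognizing its blocks (after diagonal congruence) as moment matrices of $\mu_M$, finally concluding via Weyl's inequality.

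\textbf{Comparison.} Your approach is more constructive: it yields an explicit limit matrix $\BBB$, gives a genuine non-asymptotic lower bound $\mu_{m+1}^{\varepsilon}\geq\tfrac{c_0}{2}\varepsilon^{2m}$ for all $\varepsilon<\varepsilon_0$ (rather than only a $\liminf$), and avoids the somewhat delicate subsequence/contradiction argument. The paper's argument is shorter, needs no explicit computation of the limiting structure, and leans more directly on Micchelli's inequality. Your handling of the uniformity in $M$ (via $F(M,j)\to\frac{1}{2j+1}$ and positivity of every $\BBB(M)$) and in $\yv$ (translation-invariance of $\ker\PP_m$, $\MS_k$, $\psi_k(\uv_k)$, then compactness) is correct, though these points deserve one more sentence of justification in a final write-up.
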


\begin{proof}

  First notice that $\limsup_{\varepsilon\rightarrow
    0}\varepsilon^{-2m}\mu_{m+1}^\varepsilon < \infty$, for example by using the second
  estimate in \eqref{eq:eig.upper.final}. Now define
  $$
  \rho_{m+1}^{\varepsilon}:=\min_{\av \in \MS_m,\;\|\av\|=1} \av^H
  \GG_N \av.
  $$
  Using \eqref{eq:micchelli} we obtain
\begin{align} \label{eq.eigen.mu}
\rho_{m+1}^\varepsilon \geq \min_{\av \in \MS_m,\;\|\av\|=1} \biggl\{\frac{F(M,m)\varepsilon^{2m}}{(2m)!} \left| \av^H \DD^{2m} \av \right| - \sum_{k=m+1}^{\infty}
\frac{\varepsilon^{2k}}{(2k)!}\left| \av^H \DD^{2k} \av \right|\biggr\}.
\end{align}

  Define
  $$
  \mathcal{Y}(\tau,s):=\left\{\yv = \left\{y_1,\dots,y_s\right\}: \quad
    \tau\leq |y_i-y_j| \leq 1 \text{ for } i\neq j\right\}.
  $$

  The following minimum exists:
  $$
  \Cl{uniform.quadr.form} =
  \Cr{uniform.quadr.form}\left(\tau,m,s\right):=\min_{\yv \in
    \mathcal{Y}(\tau,s)}\;\;\min_{\av \in \MS_{m}(\yv),\;\|\av\|=1} \left|
    \av^H \DD^{2m}(\yv) \av \right| > 0.
  $$

Thus, using \eqref{eq:fmk-estimate-faulh} and the second inequality in
\eqref{eq:eig.upper.final}, we can bound the minimum in \eqref{eq.eigen.mu} uniformly over all $\yv$:
\begin{equation}\label{eq:min-over-Mm}
\rho_{m+1}^\varepsilon \geq \varepsilon^{2m} \left(
  \frac{\Cr{uniform.quadr.form}}{(2m+1)!}-se
  \varepsilon^2\right).
\end{equation}

Unfortunately, we cannot in general conclude that for a fixed
$\varepsilon$ we have
$\mu_{m+1}^{\varepsilon}=\rho_{m+1}^{\varepsilon}$ -- since the
minimum over $\mathcal{Q}_m$ is not necessarily attained by a vector
in $\MS_m$. However, this claim holds asymptotically as
$\varepsilon\to 0$. Indeed, let $\av^\varepsilon\in\mathcal{Q}_m$ be a
unit vector with
$(\av^\varepsilon)^H\GG_N \av^\varepsilon =\mu_{m+1}^\varepsilon$. By passing to a
converging subsequence, we can assure that
$\lim_{\varepsilon_k\rightarrow 0} \av^{\varepsilon_k} = \av^*$ and a
standard calculation yields
\begin{align}\label{eq.mu.gn}
	\lim_{\varepsilon_k\rightarrow 0} \varepsilon_k^{-2m}\mu_{m+1}^{\varepsilon_k} = 	\lim_{\varepsilon_k\rightarrow 0} \varepsilon_k^{-2m} (\av^*)^H\GG_N\av^*<\infty.
\end{align} 
We claim that $\av^*\in\MS_m$. Otherwise, let $\ell<m$ be the smallest index such that the projection onto $\MS_\ell$ of $\av^*$ is non-zero. We write
$$
\av^* = \vv +\wv,\qquad \vv\in\MS_{\ell}, ~\wv\in \bigoplus_{k=\ell+1}^m \MS_k.
$$

Calculations analogous to \eqref{eq.eigen.mu} and
\eqref{eq:min-over-Mm} give
\begin{align*}
	(\av^*)^H\GG_N\av^* \geq \varepsilon^{2\ell} \|\vv\|_2^2\left(
	\frac{\Cr{uniform.quadr.form}}{(2\ell+1)!}-se
	\varepsilon^2\right)+2\vv^H \GG_N \wv + \wv^H\GG_N\wv.
\end{align*}
By \eqref{eq:eig.upper.final},
$\wv^H\GG_N\wv \leq se\varepsilon^{2\ell+2}$. By
\eqref{eq:zero-on-kerPPm} we have that
\begin{equation*}
\vv^H \GG_N \wv = \sum_{k=0}^\infty
(-1)^{k} \varepsilon^{2k}\frac{F(M,k)}{(2k)!}  \vv^H \DD^{2k}\wv =  \sum_{k=\ell+1}^\infty
(-1)^{k} \varepsilon^{2k}\frac{F(M,k)}{(2k)!}  \vv^H \DD^{2k}\wv
\end{equation*}
as $\vv \in\ker \PP_{\ell-1}$ and $\wv \in\ker\PP_{\ell}$. That would
result in
$$
(\av^*)^H \GG_N \av^* \geq \tilde{C} \varepsilon^{2\ell},\quad \tilde{C}>0,
$$
a contradiction to the finiteness of the limes
\eqref{eq.mu.gn}. Therefore by \eqref{eq:min-over-Mm}
$$
\liminf_{\varepsilon\to 0}\varepsilon^{-2m}\mu_{m+1}^{\varepsilon} = \liminf_{\varepsilon\to 0}\varepsilon^{-2m}\rho_{m+1}^{\varepsilon} \geq \frac{\Cr{uniform.quadr.form}}{(2m+1)!},
$$
concluding the proof with $\Cr{uniform.lower.eigen}=\frac{\Cr{uniform.quadr.form}}{(2m+1)!}$.
\end{proof}

Using \eqref{eq:asymptotic-lb}, we conclude that there exists
$\varepsilon^*(\tau,m,s)$ such that for all
$\varepsilon<\varepsilon^*$ we have
\begin{equation}
  \label{eq:lower.eig.final}
\lambda_{m+1}\left(\GG_N\right) \geq
\frac12 \Cr{uniform.lower.eigen}(\tau, m, s) \varepsilon^{2m}.
\end{equation}

Combining \eqref{eq:sing.single.via.eigs}, \eqref{eq:eig.upper.final}
and \eqref{eq:lower.eig.final}, we conclude that
\eqref{eq:sing.single.final} holds with
\begin{align*}
  \Cr{single.cluster.Nh}(\tau,s) &:= 2\min\left(1, \min_{0\leq m <s}\varepsilon^*(\tau,m,s)\right),\\
  \Cr{sing.lower.1}(\tau,s)	&:=\min_{0 \leq m < s}2^{-m-\frac12}\Cr{uniform.lower.eigen}(\tau,m,s)^{1\over 2},\\
  \Cr{sing.upper.1}(s)&:= \sqrt{se}.\qedhere
\end{align*}

\begin{remark}
  Unfortunately, the constant $\Cr{single.cluster.Nh}$ could not
  be given explicitly.
\end{remark}

\section{Spectral properties of multi-cluster Vandermonde matrices}\label{sec:multi-cluster-proofs}
	\subsection{Singular values of nearly orthogonal spaces}
	In this section we consider an $N \times s$ matrix $\AAA$ whose columns are 
	partitioned as $\AAA=[\AAA_1,\ldots,\AAA_M]$, and with the blocks $\AAA_j$
	having the following property: Let $L_j$ be the subspace spanned be the columns 
	of the sub-matrix $\AAA_j$. We consider the case where the minimal principle 
	angle between each pair of subspaces $L_j,L_k$ is large: 
	$$\angle_{\min}(L_j,L_k) \ge \frac{\pi}{2} -\alpha,$$      
	for some ``small enough'' $\alpha$. 
	
	\smallskip
	
	We show below that in this case, the singular 
	values of $\AAA$ are given by a multiplicative perturbation of the singular 
	values of all the sub-matrices $\AAA_j$, the size of the multiplicative factor of the 
	perturbation $\gamma$ is $\sqrt{1- s\alpha} \le \gamma \le \sqrt{1+s\alpha}$.
	
	\begin{lemma}\label{lem.sing.orth.spaces}
		Let $\AAA \in \CC^{N\times s}$, $N \ge s$, such that $\AAA$ is given in the following block form 
		$$\AAA=[\AAA_1,\ldots,\AAA_M],$$
		with $\AAA_j \in \CC^{N\times s_j}$ and $\sum_{j=1}^M s_j=s$. Let $L_j \subset \CC^{N}$ be the subspace
		spanned by the columns of the sub-matrix $\AAA_j$. 
		Assume that for all $1\le j,k\le M,\; j\ne k$, and $0 \le \alpha \le \frac{1}{s}$, 
		\begin{equation}\label{eq.orth.space.angle}
			\angle_{\min}(L_j,L_k)\ge \frac{\pi}{2}-\alpha.
		\end{equation} 

                Then the following hold.

                \begin{enumerate}
                \item For each $j=1,\ldots,M$, let
                  $\AAA_j = \QQ_j \RRR_j$ be the QR-decomposition of
                  $\AAA_j$, where $\QQ_j\in \CC^{N\times s_j}$ has
                  orthonormal columns, and
                  $\RRR_j \in \CC^{s_j\times s_j}$ is upper
                  triangular. Write
                  \begin{equation}\label{eq.A.block.QR}
                    \AAA = \QQ\RRR \equiv [\QQ_1,\ldots,\QQ_M]\diag(\RRR_1,\ldots,\RRR_M),
		\end{equation}
		where $\diag(\RRR_1,\ldots,\RRR_M)\in \CC^{s\times s}$
                is a block diagonal matrix whose diagonal blocks are
                $\RRR_1,\ldots,\allowbreak\RRR_M$. Then
                \begin{equation}
                  \label{eq:QQ-sing-vals}
                  \sqrt{1-s\alpha} \leq \sigma_{\min}(\QQ) \leq \sigma_{\max}(\QQ) \leq \sqrt{1+s\alpha}.
                \end{equation}

		\item 		Let 
		$$\sigma_1\ge \ldots \ge \sigma_s$$ 
		be the ordered collection of all the singular values
                of $\AAA$, and let
		$$\tilde{\sigma}_1\ge\ldots\ge \tilde{\sigma_{s}}$$ 
                be the ordered collection of all the singular values
                of the sub-matrices $\{\AAA_j\}$. Then
		\begin{align}
			\sqrt{1-s\alpha}\; \tilde{\sigma}_j \le \sigma_j\le 
			\sqrt{1+s\alpha}\; \tilde{\sigma}_j && j=1,\ldots,s.
		\end{align}
              \end{enumerate}
            \end{lemma}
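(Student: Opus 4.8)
The plan is to work entirely with the Gram matrix $\QQ^H\QQ$: the angle hypothesis forces it to be a small perturbation of the identity, and this perturbation then propagates through the block factorization $\AAA = \QQ\RRR$ to control the singular values.

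\emph{Part 1.} First I would record the (trivial) block identity $\AAA = [\AAA_1,\dots,\AAA_M] = [\QQ_1,\dots,\QQ_M]\,\diag(\RRR_1,\dots,\RRR_M)$, which follows at once from $\AAA_j=\QQ_j\RRR_j$. (Here each $\AAA_j$ may be assumed to have full column rank — as is the case in all our applications, where $\AAA_j=\VV_N(\cv^{(j)})$ with $N\ge s_j-1$ — so that $\QQ_j$ has exactly $s_j$ orthonormal columns spanning $L_j$.) Then $\QQ^H\QQ$ is an $M\times M$ block matrix whose $(j,j)$ block is $\II_{s_j}$, by orthonormality of the columns of $\QQ_j$, and whose $(j,k)$ block for $j\ne k$ is $\QQ_j^H\QQ_k$. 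Every entry of $\QQ_j^H\QQ_k$ is an inner product of a unit vector in $L_j$ with a unit vector in $L_k$; since $\angle_{\min}$ is the \emph{minimum} of the angle over all such pairs, each such entry has modulus at most $\cos\angle_{\min}(L_j,L_k)\le\cos(\tfrac\pi2-\alpha)=\sin\alpha\le\alpha$ by \eqref{eq.orth.space.angle}. Hence $\QQ^H\QQ=\II_s+\EE$ with $\EE$ Hermitian, vanishing on all diagonal blocks, with every entry bounded by $\alpha$ in modulus; as $\EE$ has at most $s^2-s$ nonzero entries, $\|\EE\|\le\|\EE\|_F\le\sqrt{s^2-s}\,\alpha\le s\alpha$ by \eqref{eq.matnorm}. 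Therefore $1-s\alpha\le\lambda_{\min}(\QQ^H\QQ)\le\lambda_{\max}(\QQ^H\QQ)\le1+s\alpha$, and taking square roots gives \eqref{eq:QQ-sing-vals}.

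\emph{Part 2.} Next I would identify the singular values of $\RRR=\diag(\RRR_1,\dots,\RRR_M)$. Block-diagonality gives $\RRR^H\RRR=\diag(\RRR_1^H\RRR_1,\dots,\RRR_M^H\RRR_M)$, so its spectrum is the union (with multiplicity) of the spectra of the $\RRR_j^H\RRR_j$; and since $\QQ_j$ has orthonormal columns, $\RRR_j^H\RRR_j=\RRR_j^H\QQ_j^H\QQ_j\RRR_j=\AAA_j^H\AAA_j$, whence $\sigma(\RRR_j)=\sigma(\AAA_j)$. Ordering decreasingly, $\lambda_j(\RRR^H\RRR)=\tilde\sigma_j^2$ for all $j$. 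Now from Part 1, $(1-s\alpha)\II_s\preceq\QQ^H\QQ\preceq(1+s\alpha)\II_s$, so conjugating by $\RRR$ yields
\[(1-s\alpha)\,\RRR^H\RRR\ \preceq\ \AAA^H\AAA=\RRR^H(\QQ^H\QQ)\RRR\ \preceq\ (1+s\alpha)\,\RRR^H\RRR.\]
Since $X\preceq Y$ implies $\lambda_j(X)\le\lambda_j(Y)$ for every $j$ (Courant--Fischer), we get $(1-s\alpha)\tilde\sigma_j^2\le\sigma_j(\AAA)^2\le(1+s\alpha)\tilde\sigma_j^2$; taking square roots — legitimate since $s\alpha\le1$, with the lower bound trivial in the boundary case $s\alpha=1$ — completes the proof.

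\emph{Main obstacle.} None of the steps is genuinely difficult; the only point demanding care is the passage from the subspace angle condition to the entrywise bound on $\QQ_j^H\QQ_k$ (using that $\angle_{\min}$ is realized as a minimum over \emph{all} pairs of vectors, so in particular the basis columns cannot be more aligned than $\angle_{\min}$), together with keeping track of the degenerate case $s\alpha=1$, where $\QQ$ may be rank-deficient yet the asserted bounds still hold. The rest is bookkeeping with the Loewner order and with the block structure of the QR factorization.
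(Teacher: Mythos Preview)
Your proof is correct and follows essentially the same approach as the paper: both analyze the Gramian $\QQ^H\QQ=\II_s+\EE$ with $\|\EE\|\le s\alpha$ via the entrywise bound coming from the angle hypothesis, and then propagate this through the block factorization $\AAA=\QQ\RRR$. The only cosmetic difference is in Part~2, where the paper invokes a standard lemma on singular values of matrix products (Lemma~\ref{lem.sing.matrix.prod}) to transfer the bounds on $\sigma(\QQ)$ to $\sigma(\AAA)$, whereas you argue equivalently via the Loewner ordering $(1-s\alpha)\RRR^H\RRR\preceq\AAA^H\AAA\preceq(1+s\alpha)\RRR^H\RRR$ and Courant--Fischer.
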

	
	For the proof of Lemma \ref{lem.sing.orth.spaces} we require the following standard estimate 
	of the singular values of matrix product (see e.g. \cite[Theorem 4.5 and exercise 6 on page 36]{stewart1990matrix}).
	\begin{lemma}\label{lem.sing.matrix.prod}
		For $m\ge n$, Let $\mathbf{C} \in \CC^{m\times n}$, $\mathbf{B}\in\CC^{m\times m}$ and $\mathbf{A}\in\CC^{m\times n}$ 
		such that $\mathbf{C}=\mathbf{B}\mathbf{A}$.
		Then the singular values of $\mathbf{C}$ are given by a multiplicative perturbation
		of the singular values of $\mathbf{A}$ as follows:
		\begin{align}
			\sigma_{\min}(\mathbf{B}) \sigma_j(\mathbf{A}) \le  \sigma_j(\mathbf{C}) \le \sigma_{\max}(\mathbf{B}) \sigma_j(\mathbf{A}), && 1\le
			j\le n \label{eq.lem.matrix.product}.
		\end{align}
	\end{lemma}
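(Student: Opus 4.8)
The plan is to prove \eqref{eq.lem.matrix.product} directly from the Courant--Fischer minimax characterization of singular values, exploiting the fact that, since $\mathbf{B}$ is \emph{square}, left multiplication by $\mathbf{B}$ distorts Euclidean norms by a factor that is squeezed between $\sigma_{\min}(\mathbf{B})$ and $\sigma_{\max}(\mathbf{B})$ \emph{uniformly} over all of $\CC^m$ (not merely over the range of $\mathbf{A}$). As the excerpt already notes, this is a classical statement, so an acceptable alternative is simply to cite \cite[Theorem~4.5]{stewart1990matrix}; below is the self-contained argument.

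First I would record the elementary two-sided norm bound: for every $y\in\CC^m$,
\begin{equation*}
  \sigma_{\min}(\mathbf{B})\,\|y\| \;\le\; \|\mathbf{B}y\| \;\le\; \sigma_{\max}(\mathbf{B})\,\|y\|,
\end{equation*}
which follows at once from the singular value decomposition $\mathbf{B}=U\Sigma V^H$ with $U,V\in\CC^{m\times m}$ unitary (here is where squareness of $\mathbf{B}$ is used): writing $\|\mathbf{B}y\|^2=\sum_{i}\sigma_i(\mathbf{B})^2\,|(V^Hy)_i|^2$ and using $\|V^Hy\|=\|y\|$. Applying this inequality with $y=\mathbf{A}x$ and $\mathbf{C}=\mathbf{B}\mathbf{A}$ gives, for every $x\in\CC^n$,
\begin{equation*}
  \sigma_{\min}(\mathbf{B})\,\|\mathbf{A}x\| \;\le\; \|\mathbf{C}x\| \;\le\; \sigma_{\max}(\mathbf{B})\,\|\mathbf{A}x\|.
\end{equation*}

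Next I would invoke the minimax formula $\sigma_j(\mathbf{M})=\max_{\dim S=j}\ \min_{x\in S,\ \|x\|=1}\|\mathbf{M}x\|$, valid for any $\mathbf{M}\in\CC^{m\times n}$ with $m\ge n$ and any $1\le j\le n$, the maximum running over $j$-dimensional subspaces $S\subseteq\CC^n$. Fix such an $S$. Minimizing the previous display over unit vectors $x\in S$ (the nonnegative outer factors pull out of the minimum) yields
\begin{equation*}
  \sigma_{\min}(\mathbf{B})\min_{x\in S,\,\|x\|=1}\|\mathbf{A}x\|
  \;\le\; \min_{x\in S,\,\|x\|=1}\|\mathbf{C}x\|
  \;\le\; \sigma_{\max}(\mathbf{B})\min_{x\in S,\,\|x\|=1}\|\mathbf{A}x\|.
\end{equation*}
Now take the maximum over all $j$-dimensional subspaces $S$; since $\sigma_{\min}(\mathbf{B})$ and $\sigma_{\max}(\mathbf{B})$ are constants independent of $S$, the left-hand and right-hand sides become $\sigma_{\min}(\mathbf{B})\,\sigma_j(\mathbf{A})$ and $\sigma_{\max}(\mathbf{B})\,\sigma_j(\mathbf{A})$ respectively, while the middle term becomes $\sigma_j(\mathbf{C})$, which is exactly \eqref{eq.lem.matrix.product}. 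There is no real obstacle beyond this bookkeeping; the only substantive use of a hypothesis is the squareness of $\mathbf{B}$, which guarantees the \emph{lower} norm bound $\sigma_{\min}(\mathbf{B})\|y\|\le\|\mathbf{B}y\|$ for all $y$ (were $\mathbf{B}$ merely rectangular with more columns than rows, this could fail, and only the upper estimate would survive).
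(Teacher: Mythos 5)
Your proof is correct. Note that the paper does not actually prove this lemma: it is stated as ``the following standard estimate'' and attributed to \cite[Theorem 4.5 and exercise 6 on page 36]{stewart1990matrix}, so there is no in-paper argument to compare against. Your self-contained derivation --- the uniform two-sided bound $\sigma_{\min}(\mathbf{B})\|y\|\le\|\mathbf{B}y\|\le\sigma_{\max}(\mathbf{B})\|y\|$ for square $\mathbf{B}$, fed into the Courant--Fischer characterization $\sigma_j(\mathbf{M})=\max_{\dim S=j}\min_{x\in S,\|x\|=1}\|\mathbf{M}x\|$ --- is exactly the standard proof of the cited theorem, and every step (pulling the constants through the inner minimum and then through the outer maximum) is sound. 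Your closing observation correctly isolates where squareness of $\mathbf{B}$ matters: only the lower bound needs it, since a wide rectangular $\mathbf{B}$ would have a nontrivial kernel and the estimate $\sigma_{\min}(\mathbf{B})\|y\|\le\|\mathbf{B}y\|$ would fail. Either citing the reference, as the authors do, or including your argument would be acceptable.
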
	      
	\begin{proof}[Proof of Lemma \ref{lem.sing.orth.spaces}]
		
		\smallskip 
		
		First we argue that the singular values of $\RRR=\diag(\RRR_1,\ldots,\RRR_M)$ 
		are given by $\tilde{\sigma}_1\ge\ldots\ge \tilde{\sigma_{s}}.$
		 
		\smallskip
		 
		Indeed we have that the singular values 
		of $\RRR$ are given by the union of the singular values of its 
		diagonal blocks $\RRR_1,\ldots,\RRR_M$. 
		From the other hand, for each $j$, the singular values of $\AAA_j$
		are equal to the singular values of $\RRR_j$ (and this is true since $\QQ_j$ is an orthogonal matrix).
		Therefore the singular values of $\RRR$, ordered according to their magnitude, are exactly 
		\begin{equation}\label{eq.sing.R}
			\tilde{\sigma}_1\ge\ldots\ge \tilde{\sigma_{s}}.
		\end{equation}
		
		\smallskip
		
		Put $\QQ = [\QQ_1,\ldots,\QQ_M]$. Next we show that $\sigma_{\max}(\QQ)\le \sqrt{1+s\alpha}$ and that $\sigma_{\min}(\QQ)\ge \sqrt{1-s\alpha}$. 
		
		\smallskip
		
		We write the Gramian matrix
		$$\QQ^H\QQ=[\QQ_1^H;\ldots;\QQ_M^H][\QQ_1,\ldots,\QQ_M]=
		  \begin{bmatrix}
    		\QQ_1^H\QQ_1 & \QQ_1^H\QQ_2 & \dots & \QQ_1^H\QQ_M \\
    		\QQ_2^H\QQ_1 & \QQ_2^H\QQ_2 & \dots & \QQ_2^H\QQ_1 \\
    		\vdots & \vdots & \vdots & \vdots \\
    		\QQ_M^H\QQ_1 & \QQ_M^H\QQ_2 & \dots & \QQ_M^H\QQ_M
  		\end{bmatrix}.
  		$$
  		The off-diagonal blocks are made out of inner products of unit vectors from different subspaces $L_j$.
  		By \eqref{eq.orth.space.angle}, for each pair of unit vectors $\vv \in L_j$ and $\uv \in L_k$, $j \ne k$,
  		$$|\langle \uv,\vv\rangle| \le \cos\left(\frac{\pi}{2} - \alpha\right)=\sin(\alpha) \le \alpha.$$ 
  		Therefore the absolute value of each entry in the off-diagonal blocks is less than $\alpha$.
  		
  		\smallskip
  		
  		On the other hand, for each $j=1,\ldots,M$, the diagonal block $\QQ_j^H\QQ_j = \II_{s_j}$, where
  		$\II_{s_j}$ is the $s_j\times s_j$ identity matrix. We can therefore write $\QQ^H\QQ$ as 
  		\begin{equation}\label{eq.QQ.pertubation}
  			\QQ^H\QQ = \II_s + \EE,
  		\end{equation}
  		where $\II_s$ is the $s\times s$ identity matrix, and $\EE \in \CC^{s\times s}$ is an Hermitian matrix,
  		the absolute value of each one of its entries is bounded by $\alpha$. Therefore 
  		\begin{align}
			\lambda_{\min}(\mathbf{E})&\geq -\|\EE \|\ge-\|\EE\|_F\geq 
			-s\alpha,\label{eq.lambda.min.E.alpha}\\
			\lambda_{\max}(\mathbf{E})&\leq \|\EE \|\leq \|\EE\|_F\leq s\alpha.\label{eq.lambda.max.E.alpha}
		\end{align} 
		Now using Weyl's perturbation inequality on the perturbation \eqref{eq.QQ.pertubation} and
		the bounds \eqref{eq.lambda.min.E.alpha} and \eqref{eq.lambda.max.E.alpha}, we have that
		\begin{align}
			\sigma_{\min}(\QQ)&=\sqrt{\lambda_{\min}\left(\QQ\right)}\ge \sqrt{1-s\alpha}\label{eq.QQ.sing.min},\\
			\sigma_{\max}(\QQ)&=\sqrt{\lambda_{\max}\left(\QQ\right)}\le \sqrt{1+s\alpha}\label{eq.QQ.sing.max}.
		\end{align}
		 
		 We conclude that according to \eqref{eq.A.block.QR}, \eqref{eq.sing.R}, \eqref{eq.QQ.sing.min} 
		 and \eqref{eq.QQ.sing.max}, $\AAA$ can be written as follows:
		 $$\AAA = \QQ \RRR,$$
		 where the minimal and maximal singular values of $\QQ$ are bounded as in \eqref{eq.QQ.sing.min} 
		 and \eqref{eq.QQ.sing.max}, and the singular values of $\RRR$ are exactly 
		 $\tilde{\sigma}_1\ge\ldots\ge \tilde{\sigma_{s}}$. The proof of Lemma \ref{lem.sing.orth.spaces}
		 is then completed by invoking Lemma \ref{lem.sing.matrix.prod} with $\mathbf{C}=\AAA$, 
		 $\mathbf{B}=\QQ$ and $\AAA = \RRR$ (on the left side are the matrices of Lemma \ref{lem.sing.matrix.prod}).    
               \end{proof}

               \subsection{Multi-cluster subspace angles}

               \begin{proposition}[Multi-cluster subspace angles]\label{prop:mult.clust.angl}
                 Suppose that
                 $\xv=\left\{x_1,\dots,x_s\right\} \subset (-\pi,\pi]$
                 forms an
                 $((h^{(j)}, s^{(j)})_{j=1}^M, \theta)$-clustered
                 configuration, and put $h=\max_{j}(h^{(j)})$.  Then
                 there exist constants $\Cr{multi.cluster.N.theta}$, $\Cr{multi.cluster.N.h}$,
                 $\Cl{max.N.theta}$ and $\Cl{max.N.h}$, depending only
                 on $s^{(1)},\ldots,s^{(M)}$, such that for
                 $\frac{\Cr{multi.cluster.N.theta}}{\theta} \le N \le
                 \frac{\Cr{multi.cluster.N.h}}{h}$ we have
                 \begin{equation}\label{eq.angle.j.k.spaces}
                   \angle_{\min}(L(\cv^{(j)},N),L(\cv^{(k)},N))
                   \ge \frac{\pi}{2} - \alpha,\quad \alpha:=\frac{\Cr{max.N.theta}}{N\theta} +
                   \Cr{max.N.h}Nh \leq {1\over s}, \tab 1\le j<k\le M.
		\end{equation}
              \end{proposition}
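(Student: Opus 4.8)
The plan is to reduce everything to the pairwise orthogonality result, Theorem \ref{thm.orth.spaces}, applied separately to each pair of clusters, and then to take a worst case over the finitely many pairs while slightly shrinking the admissible range of $N$ so as to additionally enforce $\alpha\le 1/s$. (If $M=1$ there are no pairs and the statement is vacuous, so we may assume $M\ge 2$; in particular the inter-cluster wrap-around distance satisfies $\theta\le\pi$, a fact we use below.)

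First I would fix a pair $1\le j<k\le M$. By Definition \ref{def.partial.cluster}, $\cv^{(j)}$ is an $(h^{(j)},s^{(j)})$-cluster, $\cv^{(k)}$ is an $(h^{(k)},s^{(k)})$-cluster, and $\Delta(x,y)\ge\theta$ for every $x\in\cv^{(j)}$ and every $y\in\cv^{(k)}\subseteq\xv\setminus\cv^{(j)}$. Since $\max(h^{(j)},h^{(k)})\le h$, Theorem \ref{thm.orth.spaces} applies with $s_1=s^{(j)}$, $s_2=s^{(k)}$ and with $h$ replaced by the global $h$, and yields: for every $N$ with $\Cr{const.low}(s^{(j)},s^{(k)})\le N\le \Cr{const.high}(s^{(j)},s^{(k)})/h$,
\[
\angle_{\min}\bigl(L(\cv^{(j)},N),L(\cv^{(k)},N)\bigr)\ \ge\ \frac{\pi}{2}-\frac{\Cr{subsapce.angle.N}(s^{(j)},s^{(k)})}{N\theta}-\Cr{subsapce.angle.srf}(s^{(j)},s^{(k)})\,Nh .
\]

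Next I would take extremes over the finitely many pairs. Put $\Cr{max.N.theta}:=\max_{j<k}\Cr{subsapce.angle.N}(s^{(j)},s^{(k)})$ and $\Cr{max.N.h}:=\max_{j<k}\Cr{subsapce.angle.srf}(s^{(j)},s^{(k)})$, so that, writing $\alpha=\Cr{max.N.theta}/(N\theta)+\Cr{max.N.h}Nh$, the displayed bound gives $\angle_{\min}(L(\cv^{(j)},N),L(\cv^{(k)},N))\ge \pi/2-\alpha$ for all $j<k$ simultaneously, provided $\max_{j<k}\Cr{const.low}(s^{(j)},s^{(k)})\le N\le\bigl(\min_{j<k}\Cr{const.high}(s^{(j)},s^{(k)})\bigr)/h$. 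To force $\alpha\le 1/s$ as well it is enough to control its two summands separately, $\Cr{max.N.theta}/(N\theta)\le 1/(2s)$ and $\Cr{max.N.h}Nh\le 1/(2s)$, i.e.\ $N\theta\ge 2s\,\Cr{max.N.theta}$ and $Nh\le 1/(2s\,\Cr{max.N.h})$; and, using $\theta\le\pi$, the absolute constraint $N\ge\max_{j<k}\Cr{const.low}(s^{(j)},s^{(k)})$ is implied by $N\ge\pi\max_{j<k}\Cr{const.low}(s^{(j)},s^{(k)})/\theta$. Hence I would set
\begin{align*}
\Cr{multi.cluster.N.theta}&:=\max\Bigl(\pi\max_{j<k}\Cr{const.low}(s^{(j)},s^{(k)}),\ 2s\,\Cr{max.N.theta}\Bigr),\\
\Cr{multi.cluster.N.h}&:=\min\Bigl(\min_{j<k}\Cr{const.high}(s^{(j)},s^{(k)}),\ \frac{1}{2s\,\Cr{max.N.h}}\Bigr),
\end{align*}
all four constants depending only on $s^{(1)},\dots,s^{(M)}$ (recall $s=\sum_j s^{(j)}$). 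Then for every $N$ with $\Cr{multi.cluster.N.theta}/\theta\le N\le\Cr{multi.cluster.N.h}/h$ all pairwise hypotheses of Theorem \ref{thm.orth.spaces} hold and $\alpha\le 1/s$, which is exactly \eqref{eq.angle.j.k.spaces}.

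I do not anticipate a genuine obstacle; this is a reduction, not a new estimate. The only points needing (routine) care are: (i) reading off from Definition \ref{def.partial.cluster} that the separation of each pair of clusters is at least the \emph{global} $\theta$, so one value of $\theta$ serves uniformly; (ii) absorbing the absolute lower bound $N\ge\Cr{const.low}$ into a bound of the form $N\ge C/\theta$ via $\theta\le\pi$; and (iii) the bookkeeping that packs the finitely many pairwise constants from Theorem \ref{thm.orth.spaces} into the four constants named in the statement.
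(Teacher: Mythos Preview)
Your proposal is correct and follows essentially the same argument as the paper: apply Theorem~\ref{thm.orth.spaces} to each pair of clusters, take maxima/minima of the resulting constants over all pairs, and shrink the admissible range of $N$ by the factors $2s\Cr{max.N.theta}$ and $1/(2s\Cr{max.N.h})$ to force $\alpha\le 1/s$. Your explicit choices of $\Cr{max.N.theta},\Cr{max.N.h},\Cr{multi.cluster.N.theta},\Cr{multi.cluster.N.h}$ coincide with the paper's, and your observation that $\theta\le\pi$ (used to absorb the absolute lower bound on $N$ into a $\theta$-dependent one) is exactly the device the paper uses.
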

              \begin{proof}
		\eqref{eq.angle.j.k.spaces} immediately follows from
                Theorem \ref{thm.orth.spaces} with
		\begin{align}
                  \Cr{max.N.theta}&:=\max_{1 \le j<k\le M}	\Cr{subsapce.angle.N}(s^{(j)},s^{(k)}),\\
                  \Cr{max.N.h}&:=\max_{1 \le j<k\le M}	\Cr{subsapce.angle.srf}(s^{(j)},s^{(k)}),\\
                  \Cr{multi.cluster.N.theta} &:= \max\left(\pi \left\{\max_{1 \le j<k\le M} \Cr{const.low}(s^{(j)},s^{(k)})\right\},2 s \Cr{max.N.theta}\right), \\
                  \Cr{multi.cluster.N.h} &:= \min\left(\left\{\min_{1 \le j<k\le M} \Cr{const.high}(s^{(j)},s^{(k)})\right\},\frac{1}{2s\Cr{max.N.h}}\right).
		\end{align}
		Here $\Cr{const.low}$, $\Cr{const.high}$, $\Cr{subsapce.angle.srf}$ and $\Cr{subsapce.angle.N}$ are the constants specified in Theorem \ref{thm.orth.spaces}.
	\end{proof}
               
 	\subsection{Proof of Theorem \ref{thm.union}}\label{sec:proof.orth}
        Let $\xv$ form an
        $((h^{(j)}, s^{(j)})_{j=1}^M, \theta)$-clustered
        configuration and put $h=\max_{j}(h^{(j)})$. Let
        $\sigma_1\ge \ldots \ge \sigma_s$ and
        $\tilde{\sigma}_1\ge\ldots\ge\tilde{\sigma_{s}}$ be as
        specified in Theorem \ref{thm.union}. Without loss of
        generality we assume that $\VV_N(\xv)$ is organized in block
        form, according to the clusters, as follows:
        \begin{equation}\label{eq:Vn-block-form}
          \VV_{N}(\xv)=\left[\VV_N(\cv^{(1)}),\ldots,\VV_N(\cv^{(M)})\right].
        \end{equation}
        By Proposition \ref{prop:mult.clust.angl} we have the
                estimate \eqref{eq.angle.j.k.spaces} for
		\begin{equation*}
			\frac{\Cr{multi.cluster.N.theta}}{\theta} \le 
			N\le \frac{\Cr{multi.cluster.N.h}}{h},
		\end{equation*}
                when furthermore $\alpha \le \frac{1}{s}$.  Now we
                invoke Lemma \ref{lem.sing.orth.spaces} with
                $\AAA = \VV_{N}(\xv)$, $\AAA_j = \VV_N(\cv^{(j)})$ and
                $\alpha$ as above and get that
		\begin{align*}
                  \left(1-s\alpha\right)^{\frac{1}{2}} \tilde{\sigma}_j \le \sigma_j\le 
                  \left(1+s\alpha\right)^{\frac{1}{2}} \tilde{\sigma}_j && j=1,\ldots,s,
		\end{align*}
		thus proving Theorem \ref{thm.union} with 
		$\Cr{multi.cluster.N.theta}$,
		$\Cr{multi.cluster.N.h}$ as above and 
		$\Cr{vandermonde.union.N}=\Cr{max.N.theta}s$ and 
		$\Cr{vandermonde.union.N.h}=\Cr{max.N.h}s$.
		
		\begin{remark} \label{remark.const.maxcluster}
			Let $s_{\max} = \max( s^{(j)})$. Then  $\Cr{max.N.theta},\Cr{max.N.h}$ depend only on $s_{\max}$ and not on $s$, while $\Cr{multi.cluster.N.theta}$ scales linearly in $s$ and $\Cr{multi.cluster.N.h}$ linearly in $\frac1s$. Finally, $\Cr{vandermonde.union.N}$ and 
			$\Cr{vandermonde.union.N.h}$ scale linearly in $s$. Thus, the scaling of the constants in the total number of nodes is only linear, while the scaling in the largest cluster size is more severe. For example, our estimates give 
			\begin{align*}
			\Cr{vandermonde.union.N}&\leq 2\pi^2s(2s_{\max}-1)s_{\max}\Xi^{-2}(s_{\max})\\
			\Cr{vandermonde.union.N.h} &= 8\pi s(2+\sqrt{2})s_{\max}\Xi^{-2}(s_{\max}),
			\end{align*}
			with $\Xi$ as in Proposition \ref{prop.sing.limit}.
		\end{remark}

                \subsection{Proof of Theorem
                  \ref{thm:ls-accuracy}}\label{sec:proof.ls}


                Let $\AAA\in\CC^{m\times n}$ be a matrix. We use the
                following notations:
                \begin{itemize}

              \item $\|\AAA\|_{k,1}$ for the $\ell_1$ norm of the
                $k$-th row of $\AAA$, i.e.
                  $$
                  \|\AAA\|_{k,1}:=\sum_{\ell=1}^{n} \bigl|
                  \left(\AAA\right)_{k,\ell} \bigr|,\quad
                  k\in\left\{1,\dots,m\right\};
                  $$
                \item $\|\AAA\|_{k,\max}$ for the maximum norm of the
                $k$-th row of $\AAA$, i.e.
                  $$
                  \|\AAA\|_{k,\max}:=\max_{1\leq \ell \leq n} \bigl|
                  \left(\AAA\right)_{k,\ell} \bigr|,\quad
                  k\in\left\{1,\dots,m\right\}.
                  $$ 
                \end{itemize}

                \begin{lemma}
                  Let $\AAA=\BBB\CCC \in \CC^{m\times n}$, where
                  $\CCC\in\CC^{p\times n}$. Then
                  \begin{equation}
                    \label{eq:row-norm-for-product}
                    \|\AAA\|_{k,1} \leq \sqrt{pn} \|\BBB\|_{k,\max} \|\CCC\|_F.
                  \end{equation}
                \end{lemma}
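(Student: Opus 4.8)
The plan is to isolate the $k$-th row of $\AAA$ and invoke the Cauchy--Schwarz inequality twice. Writing $p$ for the common inner dimension (so that $\BBB\in\CC^{m\times p}$), the entries of the $k$-th row of $\AAA=\BBB\CCC$ are $(\AAA)_{k,\ell}=\sum_{i=1}^{p}(\BBB)_{k,i}(\CCC)_{i,\ell}$. First I would apply Cauchy--Schwarz to this sum over the contracted index $i$, obtaining, for each $\ell\in\{1,\dots,n\}$,
\[
\bigl|(\AAA)_{k,\ell}\bigr|\le\Bigl(\sum_{i=1}^{p}\bigl|(\BBB)_{k,i}\bigr|^{2}\Bigr)^{1/2}\Bigl(\sum_{i=1}^{p}\bigl|(\CCC)_{i,\ell}\bigr|^{2}\Bigr)^{1/2}\le\sqrt{p}\,\|\BBB\|_{k,\max}\Bigl(\sum_{i=1}^{p}\bigl|(\CCC)_{i,\ell}\bigr|^{2}\Bigr)^{1/2},
\]
where the last step uses the elementary comparison $\|\vv\|_{2}\le\sqrt{p}\,\|\vv\|_{\infty}$ applied to the $k$-th row of $\BBB$.

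Next I would sum this over $\ell$ and apply Cauchy--Schwarz a second time, now with respect to the free index $\ell$, to the sum of the column norms of $\CCC$:
\[
\|\AAA\|_{k,1}=\sum_{\ell=1}^{n}\bigl|(\AAA)_{k,\ell}\bigr|\le\sqrt{p}\,\|\BBB\|_{k,\max}\sum_{\ell=1}^{n}\Bigl(\sum_{i=1}^{p}\bigl|(\CCC)_{i,\ell}\bigr|^{2}\Bigr)^{1/2}\le\sqrt{p}\,\|\BBB\|_{k,\max}\,\sqrt{n}\Bigl(\sum_{\ell=1}^{n}\sum_{i=1}^{p}\bigl|(\CCC)_{i,\ell}\bigr|^{2}\Bigr)^{1/2}.
\]
The double sum in the last parenthesis is exactly $\|\CCC\|_{F}^{2}$, so the right-hand side equals $\sqrt{pn}\,\|\BBB\|_{k,\max}\,\|\CCC\|_{F}$, which is \eqref{eq:row-norm-for-product}.

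There is essentially no obstacle here: the argument amounts to two applications of Cauchy--Schwarz together with the norm comparison $\|\cdot\|_{2}\le\sqrt{p}\,\|\cdot\|_{\infty}$. The only point that requires a little care is the bookkeeping of which index each inequality is applied to — the first Cauchy--Schwarz runs over the contracted index $i$ and produces, column by column, a bound in terms of that column's Euclidean norm, while the second runs over the free index $\ell$ and converts the sum of column norms into $\sqrt{n}$ times the Frobenius norm of $\CCC$.
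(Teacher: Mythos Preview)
Your proof is correct and essentially the same as the paper's. The only cosmetic difference is that the paper first pulls out $\|\BBB\|_{k,\max}$ via the triangle inequality (rather than Cauchy--Schwarz in the contracted index), obtaining $\|\BBB\|_{k,\max}\sum_{j,\ell}|(\CCC)_{j,\ell}|$, and then applies a single Cauchy--Schwarz (H\"older) step to the $pn$-vector of entries of $\CCC$; your two separate applications of Cauchy--Schwarz arrive at the same bound.
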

                \begin{proof}
                  We have
                  \begin{align*}
                    \|\AAA\|_{k,1} &= \sum_{\ell=1}^n \bigl|
                                     \left(\AAA\right)_{k,\ell} \bigr| = \sum_{\ell=1}^n  \left|\sum_{j=1}^p\left(\BBB\right)_{k,j} \left(\CCC\right)_{j,\ell} \right| \\
                                   & \leq \|\BBB\|_{k,\max} \sum_{j,\ell}\left|\left(\CCC\right)_{j,\ell}\right| \\
                                   &\leq \sqrt{pn} \|\BBB\|_{k,\max} \|\CCC\|_F,
                  \end{align*}
                  where the last transition is just the H\"older's inequality.
                \end{proof}

                First we prove the following estimate.
\begin{proposition}[Pseudoinverse row norms] \label{prop:pseudo.row.norms}
  Suppose that the node set
  $\xv=\left\{x_1,\dots,x_s\right\} \subset (-\pi,\pi]$ forms an
  $((h^{(j)}, \tau^{(j)},s^{(j)})_{j=1}^M, \theta)$-clustered
  configuration, with $h=\max_{1\leq j\leq M}(h^{(j)})$. Then there
  exist constants $\Cr{ls.N.theta}(s^{(1)},\dots,s^{(M)})$ and
  $\Cr{ls.N.h}(s^{(1)},\dots,s^{(M)})$ such that for all
  $\frac{\Cr{ls.N.theta}}{\theta} \leq N \leq
  \frac{\Cr{ls.N.h}}{h}$ and each $j\in\left\{1,2,\dots,M\right\}$
  there exists a constant $\Cr{ls.ub}(s^{(j)},\tau^{(j)})$ such that
  for all $\ell\in C_j\left(\xv\right)$ (recall Definition
  \ref{def.cluster.index}) we have
    \begin{equation}\label{eq:pinv.bound}
  \|\VV_N^{\dagger}(\xv)\|_{\ell,1}
\leq \Cr{ls.ub} s \left(\frac{1}{Nh^{(j)}}\right)^{s^{(j)}-1}.
\end{equation}
\end{proposition}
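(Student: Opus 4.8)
The plan is to combine the block--QR decomposition furnished by Lemma~\ref{lem.sing.orth.spaces} with the single--cluster lower bound of Theorem~\ref{thm:single.cl}: the near--orthogonality of the cluster subspaces forces the $\ell$-th row of $\VV_N^{\dagger}(\xv)$ to be governed, up to a dimension--free factor, by the single sub--matrix $\VV_N(\cv^{(j)})$ whose columns carry the index $\ell$. First I would fix $\Cr{ls.N.theta}$ and $\Cr{ls.N.h}$ (depending only on the multiplicities and, through the single--cluster threshold, on the $\tau^{(j)}$) large, resp.\ small, enough that whenever $\frac{\Cr{ls.N.theta}}{\theta}\le N\le\frac{\Cr{ls.N.h}}{h}$ the following hold simultaneously: (i) $N$ lies in the admissible range of Proposition~\ref{prop:mult.clust.angl}, so that \eqref{eq.angle.j.k.spaces} holds with $\alpha:=\frac{\Cr{max.N.theta}}{N\theta}+\Cr{max.N.h}Nh$; (ii) $s\alpha\le\frac12$; (iii) $N\ge s$; and (iv) $Nh^{(j)}\le\Cr{single.cluster.Nh}(\tau^{(j)},s^{(j)})$ for every $j$. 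After permuting columns -- which only permutes the rows of the pseudoinverse and hence leaves its row norms unchanged -- I may assume $\VV_N(\xv)$ has the block form \eqref{eq:Vn-block-form}, with the columns of cluster $j$ forming a consecutive block.

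Next I would apply Lemma~\ref{lem.sing.orth.spaces} with $\AAA=\VV_N(\xv)$ and $\AAA_j=\VV_N(\cv^{(j)})$, whose hypothesis \eqref{eq.orth.space.angle} is exactly \eqref{eq.angle.j.k.spaces}. This produces a block--QR factorization $\VV_N(\xv)=\QQ\RRR$ with $\QQ=[\QQ_1,\dots,\QQ_M]$, $\RRR=\diag(\RRR_1,\dots,\RRR_M)$, each $\VV_N(\cv^{(j)})=\QQ_j\RRR_j$ being the QR factorization of the $j$-th block, and $\sqrt{1-s\alpha}\le\sigma_{\min}(\QQ)\le\sigma_{\max}(\QQ)\le\sqrt{1+s\alpha}$ by \eqref{eq:QQ-sing-vals}. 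Each $\VV_N(\cv^{(j)})$ has full column rank, hence so does $\QQ$; and $\RRR$, being block--triangular with nonzero diagonal, is invertible. Verifying the four Moore--Penrose conditions (using $\QQ^{\dagger}\QQ=\II_s$ and the invertibility of $\RRR$) shows $\VV_N^{\dagger}(\xv)=\RRR^{-1}\QQ^{\dagger}$. Since $\RRR^{-1}=\diag(\RRR_1^{-1},\dots,\RRR_M^{-1})$, for $\ell\in C_j(\xv)$ the $\ell$-th row of $\RRR^{-1}$ is supported on the columns indexed by $C_j(\xv)$ and equals a row of $\RRR_j^{-1}$, so by \eqref{eq.matnorm}
$$
\|\RRR^{-1}\|_{\ell,\max}\le\|\RRR_j^{-1}\|=\frac{1}{\sigma_{\min}(\RRR_j)}=\frac{1}{\sigma_{\min}(\VV_N(\cv^{(j)}))},
$$
the last equality because the orthonormal columns of $\QQ_j$ make $\RRR_j$ and $\VV_N(\cv^{(j)})$ share the same singular values.

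Then I would bound the row norm through \eqref{eq:row-norm-for-product} applied to $\VV_N^{\dagger}(\xv)=\RRR^{-1}\QQ^{\dagger}$ with $\BBB=\RRR^{-1}$ and $\CCC=\QQ^{\dagger}\in\CC^{s\times(N+1)}$. Combining this with $\|\QQ^{\dagger}\|_F\le\sqrt{s}\,\|\QQ^{\dagger}\|=\sqrt{s}/\sigma_{\min}(\QQ)\le\sqrt{s}/\sqrt{1-s\alpha}$ (using \eqref{eq.matnorm} and \eqref{eq:QQ-sing-vals}) and the previous display gives
$$
\|\VV_N^{\dagger}(\xv)\|_{\ell,1}\le\sqrt{s(N+1)}\cdot\frac{1}{\sigma_{\min}(\VV_N(\cv^{(j)}))}\cdot\frac{\sqrt{s}}{\sqrt{1-s\alpha}}=\frac{s\sqrt{N+1}}{\sqrt{1-s\alpha}\,\sigma_{\min}(\VV_N(\cv^{(j)}))}.
$$
By (iii)--(iv), Theorem~\ref{thm:single.cl} applies to the $(h^{(j)},\tau^{(j)},s^{(j)})$-cluster $\cv^{(j)}$ and yields $\sigma_{\min}(\VV_N(\cv^{(j)}))=\sigma_{s^{(j)}}(\VV_N(\cv^{(j)}))\ge\Cr{sing.lower.1}(\tau^{(j)},s^{(j)})\,N^{1/2}(Nh^{(j)})^{s^{(j)}-1}$. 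Substituting this, and using $\sqrt{(N+1)/N}\le\sqrt2$ and $1-s\alpha\ge\frac12$, produces
$$
\|\VV_N^{\dagger}(\xv)\|_{\ell,1}\le\frac{2s}{\Cr{sing.lower.1}(\tau^{(j)},s^{(j)})}\,(Nh^{(j)})^{-(s^{(j)}-1)},
$$
which is \eqref{eq:pinv.bound} with $\Cr{ls.ub}(s^{(j)},\tau^{(j)}):=2/\Cr{sing.lower.1}(\tau^{(j)},s^{(j)})$.

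The one genuinely non-routine point is the localization: observing that the block--diagonal structure of $\RRR^{-1}$ confines the $\ell$-th row of $\VV_N^{\dagger}(\xv)$ to the single cluster $j$, so that its $\ell_1$-norm factors into (the conditioning of one cluster) times (a dimension--independent constant stemming from the near--orthogonal Gram matrix $\QQ^{H}\QQ$). The factor $\sqrt{N}$ apparently lost in the crude H\"older step \eqref{eq:row-norm-for-product} is then recovered exactly by the $N^{1/2}$ in the single--cluster lower bound of Theorem~\ref{thm:single.cl}, which is why \eqref{eq:pinv.bound} carries no spurious power of $N$.
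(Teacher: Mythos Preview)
Your proof is correct and follows essentially the same route as the paper's: factor $\VV_N(\xv)=\QQ\RRR$ via the block QR from Lemma~\ref{lem.sing.orth.spaces}, write $\VV_N^{\dagger}=\RRR^{-1}\QQ^{\dagger}$, bound the $\ell$-th row via \eqref{eq:row-norm-for-product}, and then feed in the single--cluster bound from Theorem~\ref{thm:single.cl}. If anything, you are slightly more careful than the paper in making the block--diagonal localization of $\RRR^{-1}$ explicit, in writing $\sqrt{s(N+1)}$ rather than $\sqrt{sN}$, and in explicitly enforcing the hypothesis $Nh^{(j)}\le\Cr{single.cluster.Nh}(\tau^{(j)},s^{(j)})$ (which, as you note, makes $\Cr{ls.N.h}$ depend also on the $\tau^{(j)}$).
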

\begin{proof}
  Again, applying Proposition \ref{prop:mult.clust.angl} and then Lemma
  \ref{lem.sing.orth.spaces} to the matrix $\VV_N(\xv)$ assumed to be
  in the block form \eqref{eq:Vn-block-form}, we obtain the block
  QR-decomposition \eqref{eq.A.block.QR} $\VV_N(\xv)=\QQ\RRR$. Since
  $\QQ\in\CC^{(N+1)\times s}$ has full column rank, and
  $\RRR\in \CC^{s\times s}$ is invertible, we have
  \begin{equation}\label{eq:vn-pinv-as-prod}
  \VV_N^{\dagger} = \RRR^{-1} \QQ^{\dagger}.
\end{equation}

Using \eqref{eq:QQ-sing-vals}, we have
\begin{equation}\label{eq:qq-pinv-spect-norm}
 \|\QQ^{\dagger}\| = \sigma_{\max}\left(\QQ^{\dagger}\right) = \sigma^{-1}_{\min} (\QQ) \leq (1-s\alpha)^{-{1\over 2}},
\end{equation}
where $\alpha = \frac{\Cr{max.N.theta}}{N\theta} + \Cr{max.N.h}Nh$ as
provided by Proposition \ref{prop:mult.clust.angl}.

On the other hand, each one of the blocks $\RRR_j$ has its singular
values exactly equal to the singular values of
$\VV_N\left(\cv^{(j)}\right)$. By Theorem \ref{thm:single.cl}, the
smallest one scales like
$N^{1\over 2} \left(Nh^{(j)}\right)^{s^{(j)}-1}$, and therefore for
some constant $\Cl{lb.R}(s^{(j)},\tau^{(j)})$ we have, using equation \eqref{eq.matnorm},
$$
\|\RRR_j^{-1}\|_{\max} \leq \|\RRR_j^{-1}\| = \sigma_{\min}^{-1}(\RRR_j) \leq \Cr{lb.R} {1\over\sqrt{N}}\left(\frac{1}{Nh^{(j)}}\right)^{s^{(j)}-1}.
$$

Let $\ell\in C_j\left(\xv\right)$, then by
\eqref{eq:row-norm-for-product} applied to \eqref{eq:vn-pinv-as-prod},
\eqref{eq:qq-pinv-spect-norm} and the fact that $\QQ^{\dagger}$ is of
rank $s$, we have that
 \begin{align*}
  \|\VV_N^{\dagger}\|_{\ell,1} & \leq \sqrt{sN} \|\RRR_j^{-1}\|_{\max} \|\QQ^{\dagger}\|_{F} \\
                               &\leq s\Cr{lb.R} (1-s\alpha)^{-{1\over 2}} \left(\frac{1}{Nh^{(j)}}\right)^{s^{(j)}-1}.
\end{align*}
Here, we used equation \eqref{eq.matnorm} once again.
If $N\theta > 4s\Cr{max.N.theta}$ and $Nh < \frac{1}{4s\Cr{max.N.h}}$ we
have $\alpha < \frac{1}{2s}$ and consequently
$\left(1-s\alpha\right)^{-{1\over 2}} < \sqrt{2}$. This
completes the proof of \eqref{eq:pinv.bound} with $\Cr{ls.N.theta}(s^{(1)},\cdots,s^{(M)}) =
\max\left(\Cr{multi.cluster.N.theta},4s\Cr{max.N.theta}\right)$,
$\Cr{ls.N.h}(s^{(1)},\dots,s^{(M)}) =
\min\left(\Cr{multi.cluster.N.h},\frac{1}{4s\Cr{max.N.h}}\right)$ and
  $\Cr{ls.ub}(s^{(j)},\tau^{(j)}) = \sqrt{2}\Cr{lb.R}$.
\end{proof}

\begin{proof}[Proof of Theorem \ref{thm:ls-accuracy}]
From Definition \ref{def.ls.solution} we clearly have
$\av\left(\xv,\bv\right) = \VV_N^{\dagger}\left(\xv\right) \bv$, and
since $\av_0=\VV_N^{\dagger}(\xv)\bv_0$, we obtain by \eqref{eq:pinv.bound}
\begin{align*}
 \left|\left(\av-\av_0\right)_{\ell}\right|  = 
 \left| \left(\VV_N^{\dagger}\left(\bv-\bv_0\right)\right)_{\ell}\right|
 & \leq \|\VV_N^{\dagger}(\xv)\|_{\ell,1}\|\bv-\bv_0\|_{\infty} \\
 & \leq \Cr{ls.ub} s \left(\frac{1}{Nh^{(j)}}\right)^{s^{(j)}-1} \|\bv-\bv_0\|_{\infty}.
\end{align*}
This completes the proof of \eqref{eq:ls-componentwise-bound} with
$\Cr{ls.N.theta}$, $\Cr{ls.N.h}$ and
$\Cr{ls.ub}$ as in Proposition \ref{prop:pseudo.row.norms}.
\end{proof}

\section{Numerical experiments}\label{sec:numerics}
In this section we provide basic numerical evidence supporting our
main results. All calculations were performed using Julia 1.1 with
standard packages, in double precision floating point (and sometimes
multi-precision).

\subsection{Cluster subspace angles}
We use the notation from Theorem \ref{thm.orth.spaces}. In order to
compute the minimal principal angle $\angle_{\min}(L_1,L_2)$ between
$L_1=L(\xv,N)$ and $L_2=L(\yv,N)$, we use the standard SVD-based
algorithm (see e.g. \cite{bjorck1973,knyazev2002}) which is
numerically stable for large angles. We then compute the
\emph{complementary angle}
$$
\beta\left(L_1,L_2\right):={\pi\over 2} - \angle_{\min}(L_1,L_2).
$$
In the experiments, the two clusters were chosen to consist of
equispaced nodes with the same cluster size $h^{(1)}=h^{(2)}=h$, and
with a prescribed distance $\theta$ between the closest nodes.

According to the estimate \eqref{eq.thm.angle} from Theorem
\ref{thm.orth.spaces}, we have the bound
\begin{equation}\label{eq:complementary-angle-bound}
  \beta\left(L_1,L_2\right) \leq {\Cr{subsapce.angle.N}\over {N\theta}} +  \Cr{subsapce.angle.srf} Nh.
\end{equation}

In the first set of experiments, we kept the value of $\theta$ fixed,
and changed $N,h$ simultaneously so that the product $Nh$ remained
fixed. We chose 3 different values $Nh=10^{-10},10^{-5},0.1$. The
dependence of $\beta$ on $N$ is presented in Figure
\ref{fig:angle-Nh}. For $s^{(1)}=4, s^{(2)}=2$ (left plot) the
asymptotic decay $\beta \sim {1\over N}$ is clearly seen, with smaller
values of $\beta$ corresponding to smaller values of $Nh$. This
suggests that ${\Cr{subsapce.angle.N}\over {N\theta}}$ is indeed the
dominant term with respect to $\Cr{subsapce.angle.srf} Nh$ in
\eqref{eq:complementary-angle-bound}. However, for $s^{(1)}=s^{(2)}=6$
(right plot) we see that when $Nh=10^{-5}$, the value of $\beta$ is
relatively constant, $\approx 10^{-5}$, while the value of $\beta$ for
$Nh=0.1$ decays relatively slowly with $N$ (for $Nh=10^{-10}$ we still
have $\beta\sim{1\over N}$). This suggests that the value of $Nh$ is
indeed important for controlling the subspace angle in the regime
$Nh\ll 1$, however for $Nh =\MO(1)$ there must be other factors.

\begin{figure}
  \centering
  \includegraphics[width=0.45\linewidth]{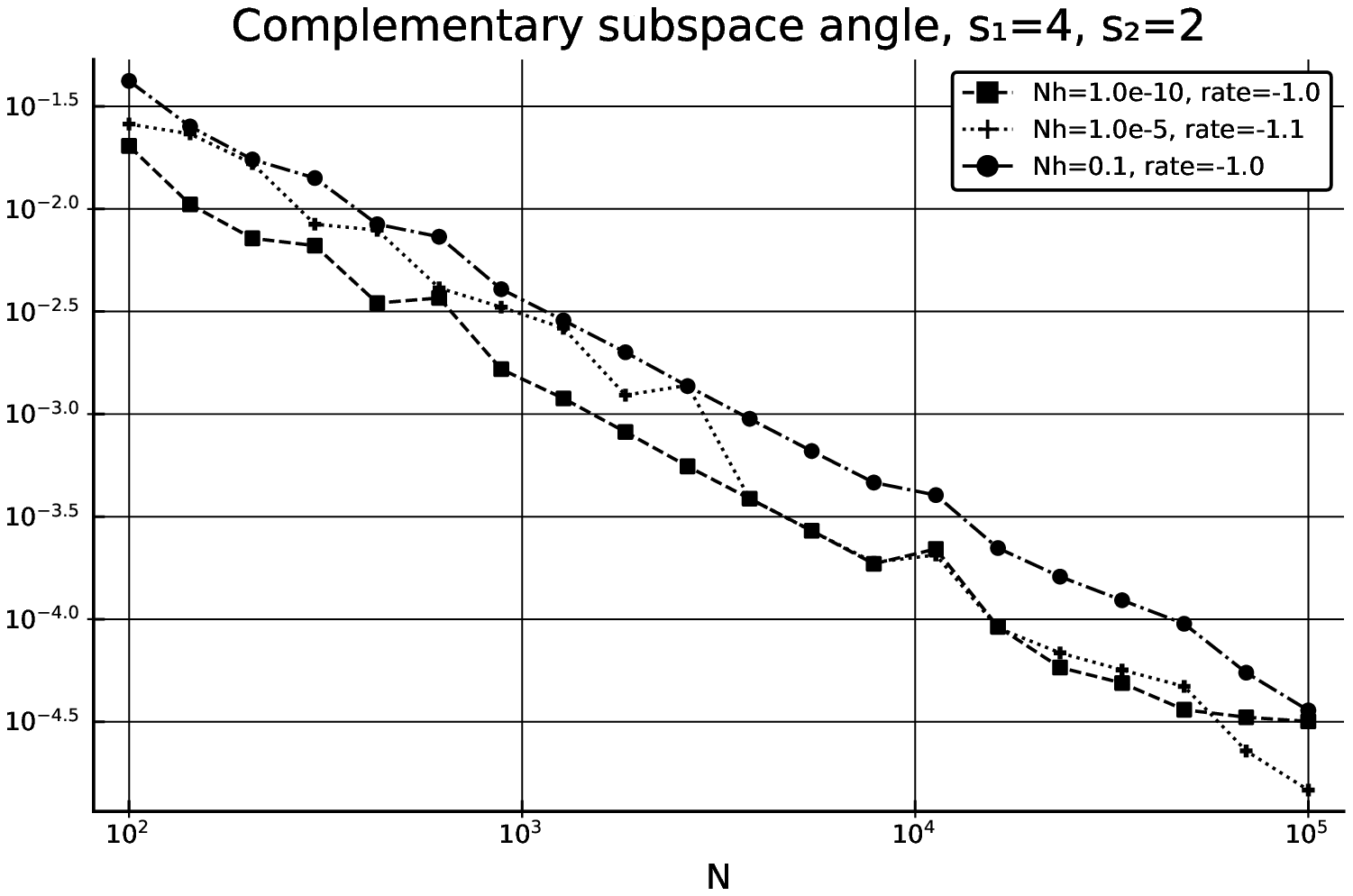}   \includegraphics[width=0.45\linewidth]{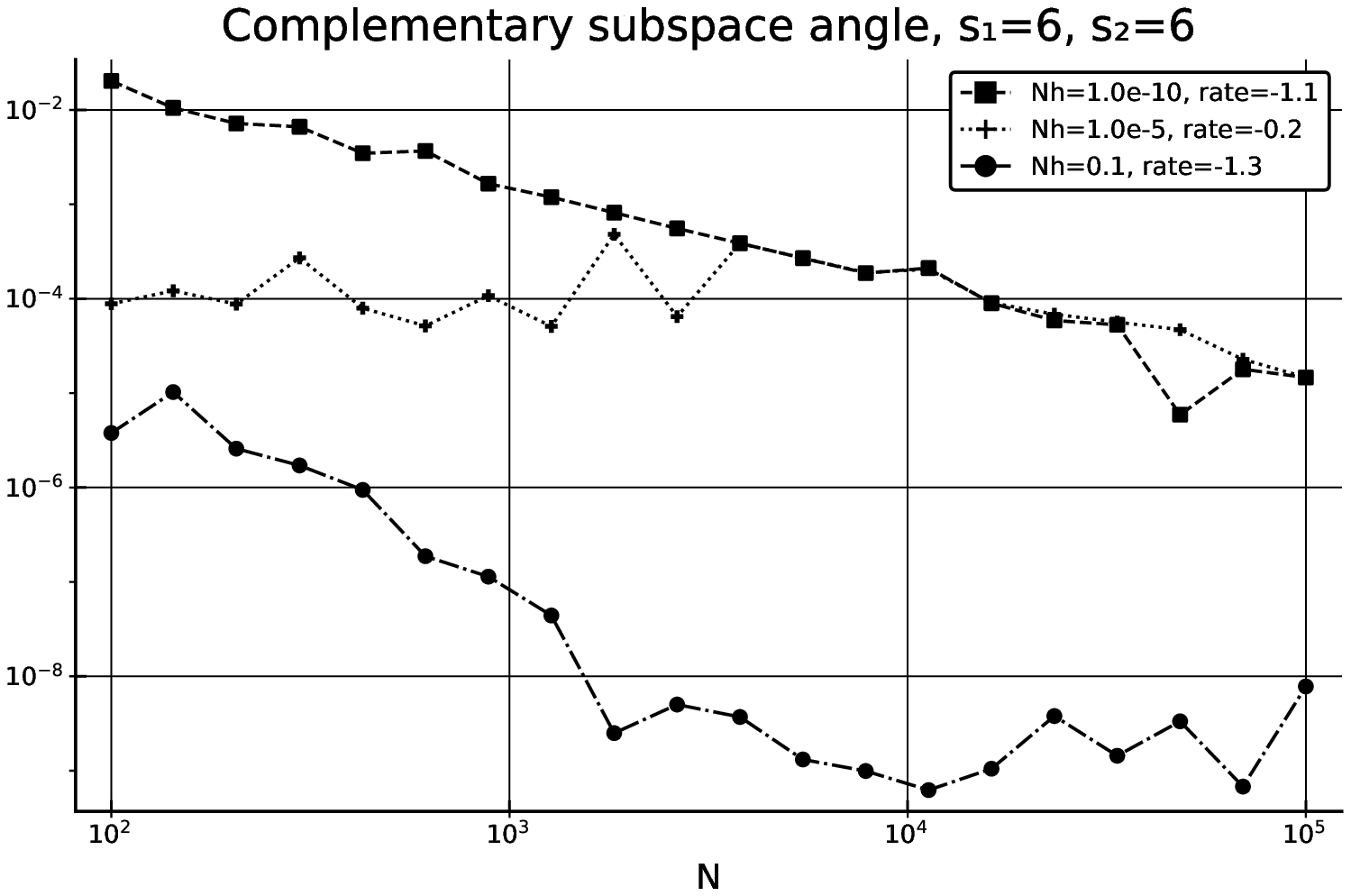}
  \caption{Complementary subspace angle $\beta$. $Nh$ and $\theta$
    fixed, varying $N$.}
  \label{fig:angle-Nh}
\end{figure}

In the second set of experiments, we kept the values of $N$ and
$\theta$ fixed, while changing $h$. We chose again 3 different values
$\theta=0.01,0.1,1$. The dependence of $\beta$ on $h$ (or $Nh$) in
this case is shown in Figure \ref{fig:angle-Ntheta}. Notice that for
small enough $Nh$ we indeed see that $\beta$ approaches a positive
value which is proportional to ${1\over{N\theta}}$, i.e. the dominant
role is played by the cluster separation. For increasing values of
$Nh$, the actual cluster subspaces move further away from the limit
spaces and therefore this regime is not covered by our
theory. However, apparently also in this case $\beta$ remains small,
but this must be due to other factors.

\begin{figure}
  \centering
  \includegraphics[width=0.45\linewidth]{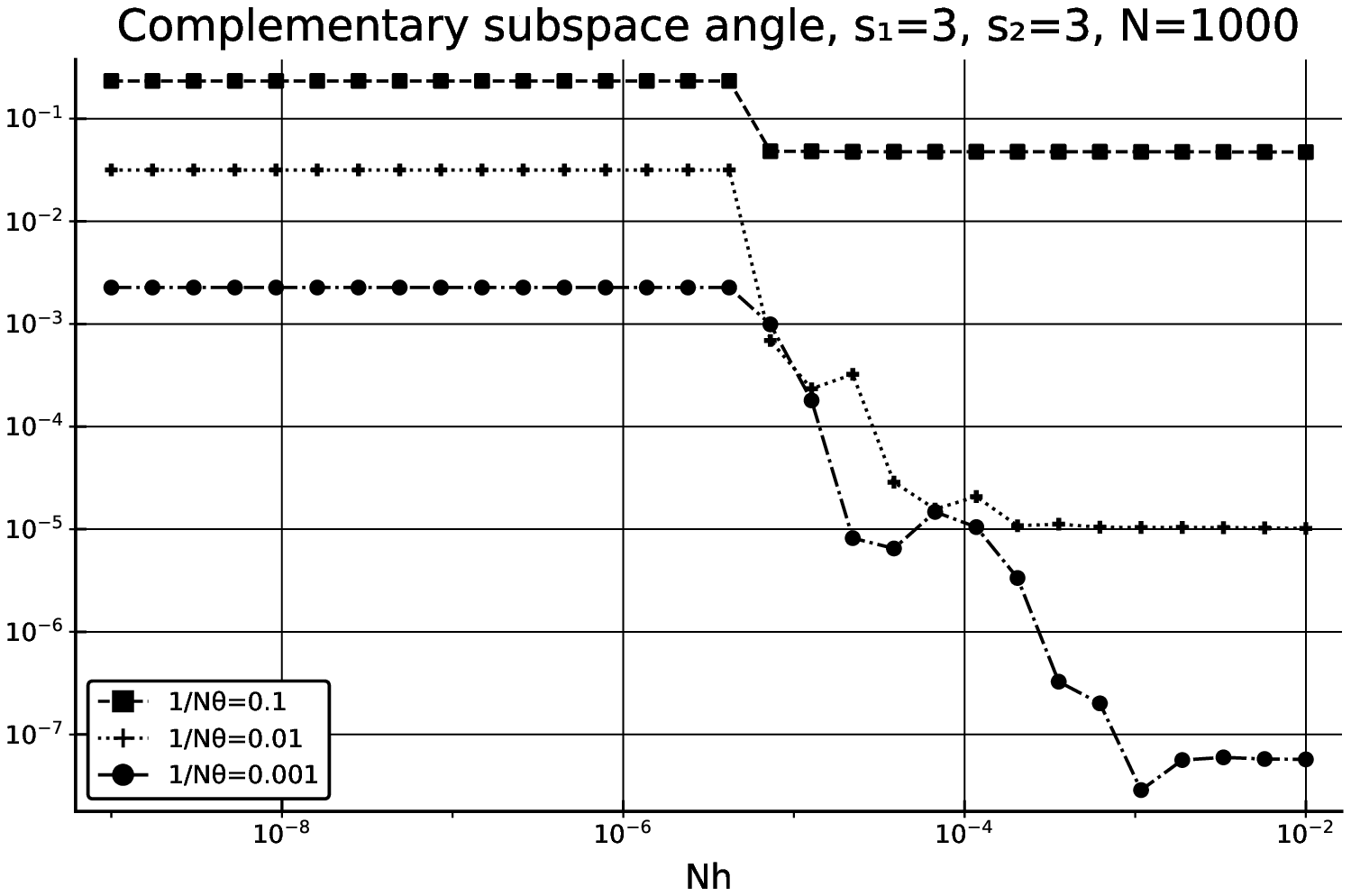}   \includegraphics[width=0.45\linewidth]{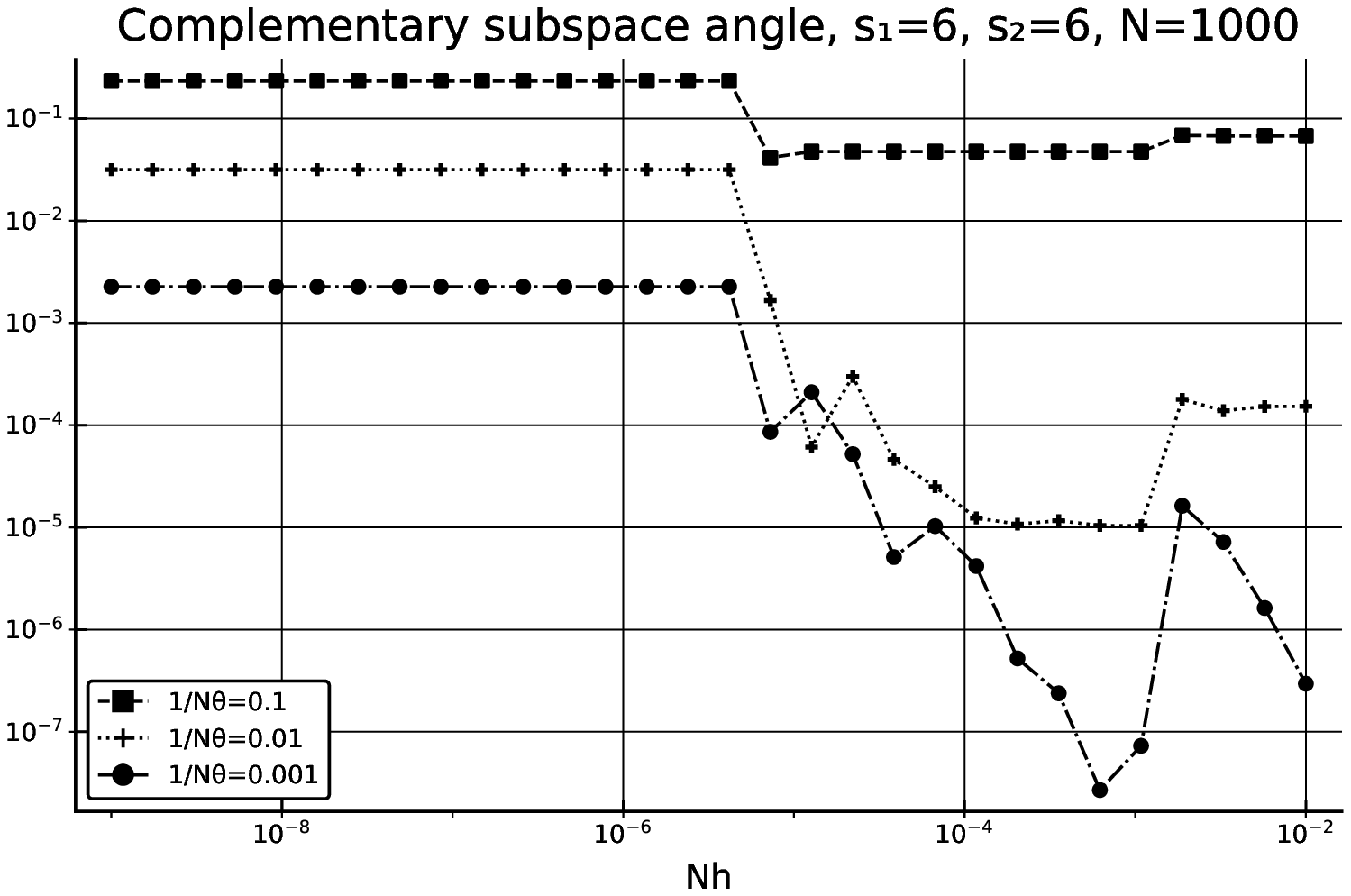}
  \caption{\small Complementary subspace angle $\beta$. $N,\theta$
    fixed, varying $h$.}
  \label{fig:angle-Ntheta}
\end{figure}

\subsection{Spectra of multi-cluster Vandermonde matrices}

We normalize the Vandermonde matrix and compute the spectra of the
matrices ${1\over{\sqrt N}}\VV_N(\xv)$. For each experiment we choose
the values of $N$ and $h$ randomly from within a prescribed range. In
the multi-cluster setting, we construct 2 nontrivial clusters of same
size $h^{(1)}=h^{(2)}=h$, and add zero or more well-separated nodes
(so if $s^{(1)}=2$, $s^{(2)}=3$ and $s=7$, there are 2 clusters of
multiplicity 1).  The values of $\sigma_j({1\over\sqrt{N}}\VV_N(\xv))$
for different $\xv$ are plotted in Figure \ref{fig:singular-values}.

There is good agreement with Theorem \ref{thm:single.cl} and Corollary
\ref{cor:full-sing-vals}. The proportionality constant is seen to be
not too large. Furthermore, the minimal value of $Nh$ for which the
bounds hold (corresponding to the constants $\Cr{single.cluster.Nh}$
and $\Cr{multi.cluster.N.h}$ in Theorems \ref{thm:single.cl} and
\ref{thm.union}, respectively) is apparently reasonably high.

\begin{figure}
  \centering
  \includegraphics[width=0.45\linewidth]{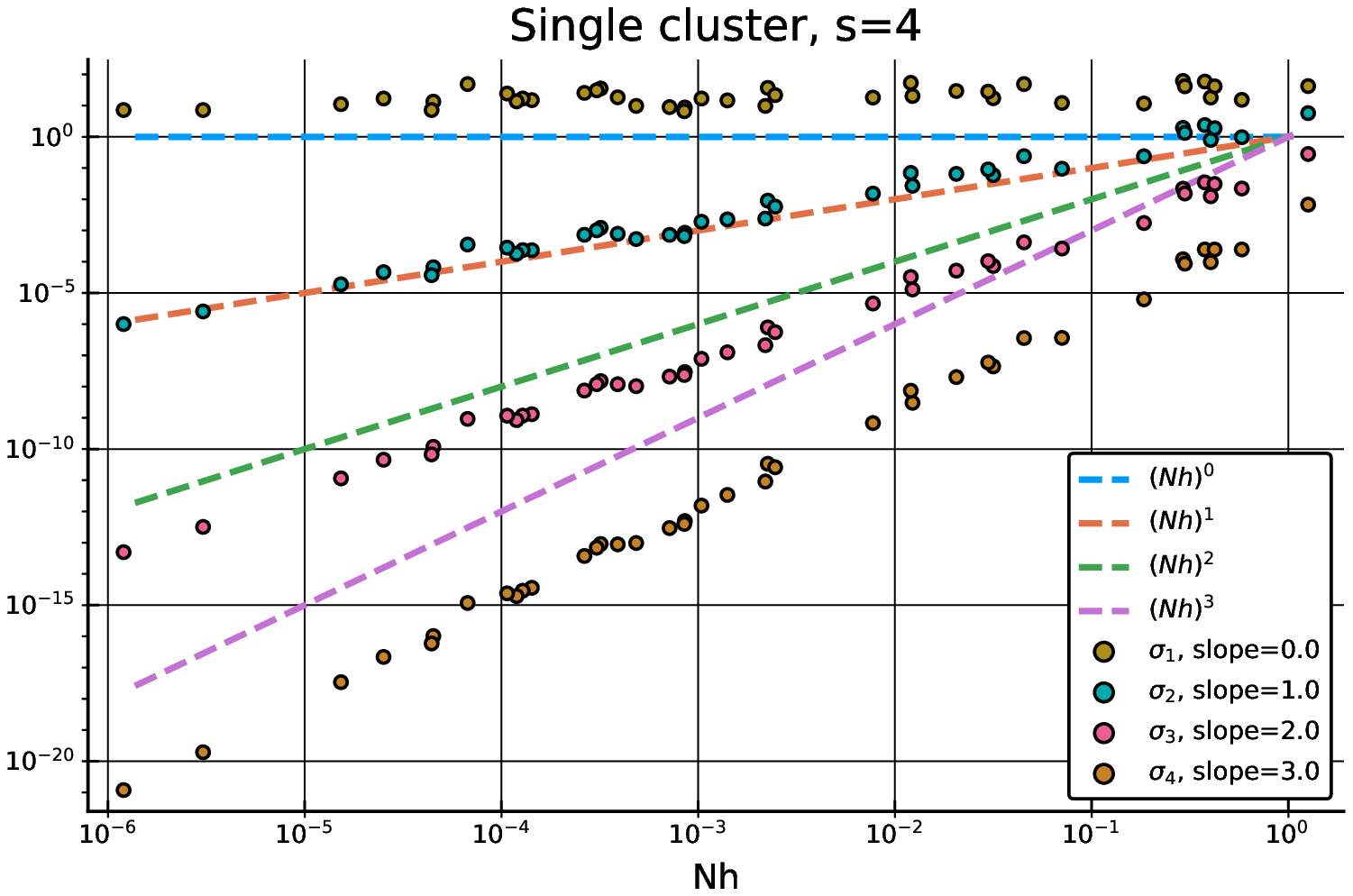}   \includegraphics[width=0.45\linewidth]{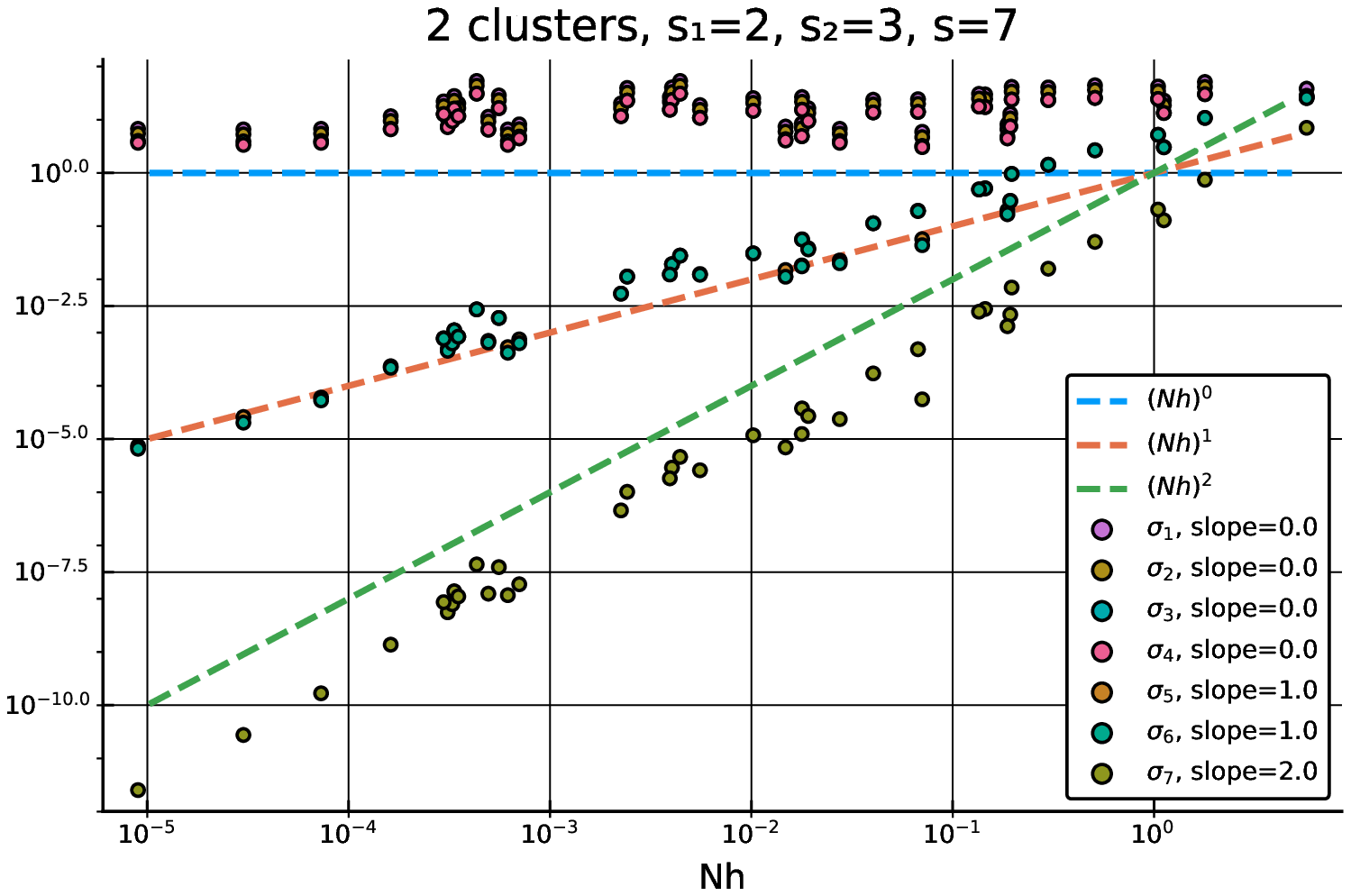}

  \caption{\small Singular values of ${1\over\sqrt{N}}\VV_N$ as a function of
    $Nh$. {\bf (left)} A single cluster with $s=4$. {\bf (right)} 2
    nontrivial clusters (and 4 overall), multiplicities =
    $2,1,3,1$. The slopes of the lines, computed by a linear fit, are
    written in the respective legend labels.}
  \label{fig:singular-values}
\end{figure}
	
\subsection{Least squares accuracy}
Here we solve the least squares problem as in Theorem
\ref{thm:ls-accuracy}. In each experiment, we choose $N,h,\varepsilon$
uniformly at random within prescribed ranges. This in particular
defines $\xv$ and $\VV_N(\xv)$. We then choose the
entries of the vectors $\av_0$ and $\vec{f}$ to be uniformly randomly
distributed in $\left[0,1\right]$. Then we put $\bv_0=\VV_N(\xv)\av_0$
and $\bv = \bv_0 + \varepsilon\vec{f}$. We then compute $\av=\av(\xv,\bv)$
as in Definition \ref{def.ls.solution}. Finally, we set
$$
\delta a_{\ell}:=\frac{|(\av-\av_0)_{\ell}|}{\|\bv-\bv_0\|_{\infty}}.
$$
We then repeat the experiment multiple times, and plot
$\delta a_{\ell}$ for all $\ell=1,\dots,s$ as a function of $Nh$. The
results are presented in Figure \ref{fig:ls-double-s6}. There is a
good agreement with the estimate \eqref{eq:ls-componentwise-bound} for
each component.
\clearpage
\begin{figure}
  \centering
  \includegraphics[width=0.75\linewidth]{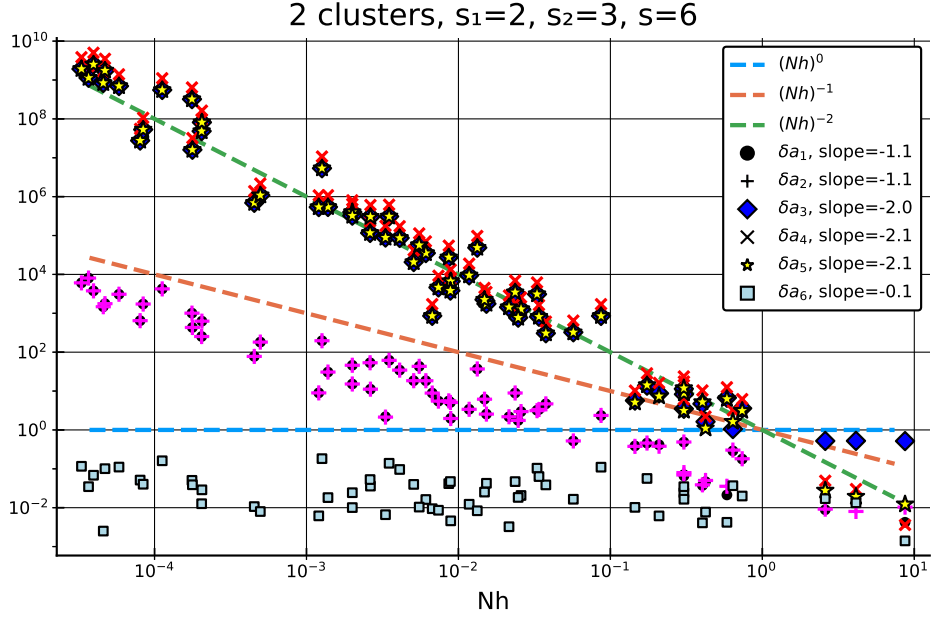}
  \caption{\small Accuracy of least squares reconstruction. 2
    nontrivial clusters (and 3 overall), multiplicities = $2,3,1$. The
    slopes of the lines, computed by a linear fit, are written in the
    respective legend labels.}
  \label{fig:ls-double-s6}
\end{figure}

\appendix

\section{Divided differences}\label{appen.dd}

Recall Definition \ref{def.divided.diff}. The following properties can
be found in e.g. \cite{de2005divided}, see also \cite[Section
6.2]{batenkov_geometry_2014}.

\begin{lemma}\label{lem:dd-properties}
  The functionals $[t_1,\dots,t_n]$ satisfy the following.
  \begin{enumerate}
  \item $[t_1,\dots,t_n]f$ is a symmetric function of
    $(t_1,\dots,t_n)$. 
  \item $[t_1,\dots,t_n]f$ is a continuous function of
    $(t_1,\dots,t_n)$, i.e.
    \begin{equation}\label{eq:dd-continuity}
    \lim_{\left(t_1,\dots,t_n\right)\to(u_1,\dots,u_n)}[t_1,\dots,t_n]f
    = [u_1,\dots,u_n]f.
    \end{equation}
  \item The numbers $[t_1,\dots,t_n]f$ can be computed by the
    recursive rule
    \begin{equation}
      \label{eq:dd-recursive-comp}
      [t_1,\dots,t_n]f =
      \begin{cases}
        f(t_1) & n=1; \\
        \frac{[t_2,\dots,t_n]f - [t_1,\dots,t_{n-1}]f}{t_n-t_1} & t_1 \neq t_n; \\
        \lim_{\xi\to t_n}\bigl\{ \frac{\dt}{\dt \xi}
        \left([\xi,t_2,\dots,t_{n-1}]f\right) \bigr\} & t_1=t_n.
      \end{cases}
    \end{equation}
  \item In particular, if all $\{t_j\}$'s are distinct, then
    \begin{equation}\label{eq:dd-distinct-explicit}
      [t_1,\dots,t_n]f = \sum_{j=1}^n \frac{f(t_j)}{\prod_{k\neq j}(t_j-t_k)}.
    \end{equation}
  \item (Mean value theorem) Let $t_1,\dots,t_n \in \RR$ and put
    $I:=\left[\min_{\ell} t_{\ell}, \max_{\ell} t_{\ell}\right]$. Then
    \begin{equation}\label{eq:dd-mean-value}
      [t_1,\dots,t_n]f = \frac{f^{(n-1)}(\xi)}{(n-1)!},\quad \xi \in I.
    \end{equation}
  \item From the above, in particular,
    \begin{equation}\label{eq:dd-repeated-all}
      [\underbrace{t,t,\dots,t}_{n\text{ times}}]f = \frac{f^{(n-1)}(t)}{(n-1)!}.
    \end{equation}
  \end{enumerate}
\end{lemma}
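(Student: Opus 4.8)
The plan is to dispatch items (1), (4), (5), (6) by short classical arguments, to obtain (3) from the uniqueness of the interpolant, and to handle the one genuinely delicate point --- continuity through coalescences of the nodes --- by an integral representation.

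Set $p=p_{t_1,\dots,t_n}$ for the Hermite interpolant appearing in the definition, and note first that, since in the Newton expansion every term other than the $j=n$ term has degree $<n-1$, the number $[t_1,\dots,t_n]f$ is exactly the coefficient of $t^{n-1}$ in $p$, i.e.\ its leading coefficient. \textbf{(1):} the interpolation conditions $p^{(\ell)}(t_j)=f^{(\ell)}(t_j)$, $0\le\ell<d_j$, depend only on the multiset $\{t_1,\dots,t_n\}$, hence so does $p$ and so does its leading coefficient; this is the symmetry. \textbf{(4):} for pairwise distinct $t_j$ the Lagrange form $p(t)=\sum_{j}f(t_j)\prod_{k\ne j}\frac{t-t_k}{t_j-t_k}$ is valid, and extracting its $t^{n-1}$-coefficient gives \eqref{eq:dd-distinct-explicit}. \textbf{(5):} put $g:=f-p$; by the interpolation conditions $g$ vanishes at $t_j$ to order at least $d_j$, so $g$ has at least $n$ zeros in $I=[\min_\ell t_\ell,\max_\ell t_\ell]$ counting multiplicity, whence a multiplicity-counting Rolle argument produces $\xi\in I$ with $g^{(n-1)}(\xi)=0$; since $p^{(n-1)}\equiv(n-1)!\,[t_1,\dots,t_n]f$ this is \eqref{eq:dd-mean-value}. \textbf{(6):} take $t_1=\dots=t_n=t$ in (5) (equivalently, note $p$ is then the degree-$(n-1)$ Taylor polynomial of $f$ at $t$).

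For \textbf{(3)}, if $t_1\ne t_n$ I would use Aitken's identity: with $p_{2\cdots n}$ and $p_{1\cdots(n-1)}$ the interpolants on the indicated sublists, the polynomial
\begin{equation*}
  q(t):=\frac{(t-t_1)\,p_{2\cdots n}(t)-(t-t_n)\,p_{1\cdots(n-1)}(t)}{t_n-t_1}
\end{equation*}
has degree $\le n-1$ and meets all the conditions defining $p_{1\cdots n}$ (at $t_1$ it collapses to $p_{1\cdots(n-1)}$, at $t_n$ to $p_{2\cdots n}$, and at an interior node the two barycentric weights sum to $1$, with the corresponding statement for derivatives at repeated interior nodes), so $q=p_{1\cdots n}$ by uniqueness, and comparing coefficients of $t^{n-1}$ gives the middle branch of \eqref{eq:dd-recursive-comp}. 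The confluent branch $t_1=t_n$ follows by letting $t_1\to t_n$ and identifying the limit as $\frac{\dt}{\dt\xi}\big([\xi,t_2,\dots,t_{n-1}]f\big)\big|_{\xi=t_n}$, using the continuity and differentiability established in (2). The base case $n=1$ is the definition.

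The main obstacle is \textbf{(2)}: the formulas above degenerate as nodes merge, so continuity of $(t_1,\dots,t_n)\mapsto[t_1,\dots,t_n]f$ is not visible from them. I would establish it through the Hermite--Genocchi representation
\begin{equation*}
  [t_1,\dots,t_n]f=\int_{\Sigma_{n-1}}f^{(n-1)}\!\big(\theta_1 t_1+\dots+\theta_n t_n\big)\,\dt\theta_2\cdots\dt\theta_n,
\end{equation*}
where $\Sigma_{n-1}=\{(\theta_2,\dots,\theta_n):\theta_j\ge0,\ \sum_{j=2}^n\theta_j\le1\}$ and $\theta_1:=1-\sum_{j=2}^n\theta_j$. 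To prove this identity I would verify it on the monomials $f(t)=t^m$ by a direct computation and then extend to all $f\in C^{n-1}$ by density of polynomials in $C^{n-1}$ on a compact interval containing the nodes, noting that $f\mapsto[t_1,\dots,t_n]f$ is a bounded linear functional on $C^{n-1}$ (being a fixed node-dependent linear combination of the values $f^{(\ell)}(t_j)$) and that the integral on the right is likewise $C^{n-1}$-continuous in $f$. Crucially this argument does not distinguish coalesced from distinct nodes. Granting the identity, continuity \eqref{eq:dd-continuity} is immediate: $\Sigma_{n-1}$ is compact and $f^{(n-1)}$ uniformly continuous on compact sets, so the integrand converges uniformly on $\Sigma_{n-1}$ as the nodes converge. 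As a bonus the same representation re-proves (1) (permute the $\theta_j$), (5) (each $\sum_j\theta_jt_j\in I$ and $\operatorname{vol}\Sigma_{n-1}=1/(n-1)!$, then apply the intermediate value theorem to $f^{(n-1)}$ on $I$), and (6) (constant integrand). A self-contained alternative to the integral formula is to write $[t_1,\dots,t_n]f$ for distinct nodes as a quotient of a confluent-type determinant by the Vandermonde determinant and to resolve the removable singularities on the diagonals by L'Hôpital; this is more computational but avoids any appeal to density.
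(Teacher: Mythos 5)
Your proof is correct, but note that the paper does not actually prove this lemma: it is stated as a collection of classical facts with a pointer to the literature (de Boor's survey on divided differences and \cite{batenkov_geometry_2014}), so there is no in-paper argument to compare against. Your self-contained derivation follows the standard route and is sound: the observation that $[t_1,\dots,t_n]f$ is the leading coefficient of the Hermite interpolant immediately gives (1) and (4); the generalized Rolle argument gives (5) and hence (6); the Aitken--Neville identity (verified via uniqueness of the interpolant, with the Leibniz-rule check at repeated interior nodes) gives the middle branch of (3); and the Hermite--Genocchi integral representation, established on monomials and extended by density of polynomials in $C^{n-1}$ using that both sides are bounded linear functionals there, delivers (2) in a way that is insensitive to coalescence. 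One small imprecision: for the confluent branch of (3) you invoke ``the continuity and differentiability established in (2)'', but (2) as stated is only continuity. The differentiability you need --- that $\xi\mapsto[\xi,t_2,\dots,t_{n-1}]f$ is differentiable with $\frac{\dt}{\dt\xi}[\xi,t_2,\dots,t_{n-1}]f=[\xi,\xi,t_2,\dots,t_{n-1}]f$ --- does follow either by differentiating the Hermite--Genocchi integral under the integral sign (requiring $f\in C^{n-1}$, which is the implicit standing assumption throughout) or by combining the middle branch of the recursion with the continuity of the $n$-point divided difference; you should say which, rather than attributing it to (2). This is a presentational gap, not a mathematical one.
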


\section{Power sums}\label{appen.power.sums}
\begin{lemma}
  For a positive integer $p$, the sum of the $p^{th}$ powers of the
  first $N+1$ non-negative integers is given by Faulhaber's formula
  \begin{equation}\label{eq.Faulhaber.formula} \sum_{k=0}^N k^{p} =
    \frac{N^{p+1}}{p+1}+\frac{1}{2}N^p+\sum_{k=2}^p
    \frac{B_{k}}{k!}(p)_{k-1}N^{p-k+1}=\frac{N^{p+1}}{p+1} + \MO(N^{p}),
  \end{equation}
  where $B_k$ are the Bernoulli numbers, and $(p)_{k-1}$ is the
  falling factorial, $(p)_{k-1}=\dfrac{p!}{(p-k+1)!}$.
  We also have the following non-asymptotic bounds:
  \begin{equation}\label{eq.integer.power.sum}
    \frac{N^{p+1}}{p+1}= \int_{0}^N x^p \dt x \le \sum_{k=0}^N k^p \le N^{p+1}.  
  \end{equation}
\end{lemma}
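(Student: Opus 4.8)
The plan is to establish the exact Faulhaber identity in \eqref{eq.Faulhaber.formula} first, deduce the asymptotic statement $\sum_{k=0}^N k^p=\frac{N^{p+1}}{p+1}+\MO(N^p)$ as an immediate corollary, and then prove the elementary two-sided estimate \eqref{eq.integer.power.sum} separately by an integral comparison.

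For Faulhaber's formula I would argue via the exponential generating function of the power sums. Writing $S_p(N):=\sum_{k=0}^N k^p$, one has
\[
  \sum_{p\ge 0}S_p(N)\,\frac{t^p}{p!}=\sum_{k=0}^N e^{kt}=\frac{e^{(N+1)t}-1}{e^t-1}=\frac{e^{(N+1)t}-1}{t}\cdot\frac{t}{e^t-1}.
\]
Expanding the first factor as $\sum_{m\ge 0}\frac{(N+1)^{m+1}}{(m+1)!}t^m$ and the second as the Bernoulli series $\sum_{j\ge 0}B_j\frac{t^j}{j!}$, forming the Cauchy product, and reading off the coefficient of $t^p/p!$ gives a polynomial identity for $S_p(N)$; after a binomial expansion of the powers of $N+1$ and regrouping (equivalently, after switching to the $B_1=+\tfrac12$ normalization of the Bernoulli numbers, which is the one consistent with the sign of the linear term in \eqref{eq.Faulhaber.formula}) one obtains $S_p(N)=\frac{1}{p+1}\sum_{k=0}^{p}\binom{p+1}{k}B_k N^{p+1-k}$. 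Separating the $k=0$ and $k=1$ summands, which equal $\frac{N^{p+1}}{p+1}$ and $\frac12 N^p$, and using $\frac{1}{p+1}\binom{p+1}{k}=\frac{(p)_{k-1}}{k!}$ for $k\ge 1$, yields \eqref{eq.Faulhaber.formula} verbatim. An alternative, generating-function-free route is induction on $p$ via the telescoping identity $(N+1)^{p+1}=\sum_{k=0}^N\big[(k+1)^{p+1}-k^{p+1}\big]=\sum_{j=0}^{p}\binom{p+1}{j}S_j(N)$, which expresses $S_p(N)$ recursively in terms of $S_0(N),\dots,S_{p-1}(N)$; this at once shows $S_p(N)$ is a degree-$(p+1)$ polynomial with leading coefficient $\frac{1}{p+1}$, hence $S_p(N)=\frac{N^{p+1}}{p+1}+\MO(N^p)$, and a short induction identifies the remaining coefficients with the Bernoulli expression.

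For \eqref{eq.integer.power.sum} I would only use that $x\mapsto x^p$ is nonnegative and nondecreasing on $[0,\infty)$. On each interval $[k-1,k]$ we have $\int_{k-1}^{k}x^p\,\dt x\le k^p$; summing over $k=1,\dots,N$ gives $\frac{N^{p+1}}{p+1}=\int_0^N x^p\,\dt x\le\sum_{k=1}^N k^p=\sum_{k=0}^N k^p$, which is the left inequality. For the right inequality, $k^p\le N^p$ for every $0\le k\le N$ and the $k=0$ term vanishes, so $\sum_{k=0}^N k^p\le N\cdot N^p=N^{p+1}$.

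There is essentially no obstacle here: the only point demanding a little care is the Bernoulli-number convention, since the displayed formula is the one corresponding to $B_1=+\tfrac12$ (so that the linear term reads $+\tfrac12 N^p$ rather than $-\tfrac12 N^p$), together with the cosmetic identity $\frac{1}{p+1}\binom{p+1}{k}=\frac{(p)_{k-1}}{k!}$ matching the falling-factorial normalization used in the statement.
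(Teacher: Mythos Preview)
Your argument is correct and complete: the generating-function derivation (or the telescoping alternative) yields Faulhaber's identity, the asymptotic follows immediately, and the integral comparison gives both sides of \eqref{eq.integer.power.sum}. Your remark about the $B_1=+\tfrac12$ convention is also on point.

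There is, however, nothing to compare against: the paper states this lemma in the appendix as a known result and gives no proof at all. So your write-up is strictly more than what the paper provides.
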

    
\section{Trigonometric
  cancellation}\label{appen.trigonometric.cancellation}

\begin{lemma}\label{lem:norm2}
	For each $z \ne 1$ with $|z|=1$ and for each $m\in \mathbb N$ we have
	\begin{equation}\label{eq:sum3}
		\left|\sum_{k=0}^N k^m z^k\right|\le \frac{2}{|1-z|}N^m.
	\end{equation}
\end{lemma}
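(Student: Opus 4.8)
The plan is to multiply the sum by $1-z$ and exploit the resulting telescoping cancellation; this is morally Abel summation, but arranged so that the division by $|1-z|$ happens only once, which is what keeps the constant equal to $2$ rather than $4$.

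First I would dispose of the degenerate case: if $m=0$ then $\sum_{k=0}^N z^k = \frac{1-z^{N+1}}{1-z}$, and $|z|=1$ forces $|1-z^{N+1}|\le 2$, so $\left|\sum_{k=0}^N z^k\right|\le \frac{2}{|1-z|}$, which is \eqref{eq:sum3} with $N^0=1$. For $m\ge 1$ the $k=0$ term vanishes, so set $T:=\sum_{k=1}^N k^m z^k$ and expand $(1-z)T = T-zT$. Reindexing $zT=\sum_{k=2}^{N+1}(k-1)^m z^k$ and collecting the boundary terms $k=1$ and $k=N+1$ gives
$$(1-z)T \;=\; z \;+\; \sum_{k=2}^N\bigl(k^m-(k-1)^m\bigr)z^k \;-\; N^m z^{N+1}.$$

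Next I would apply the triangle inequality with $|z|=1$, using that $k^m-(k-1)^m\ge 0$ to drop the absolute values inside the sum, and then telescope: $\sum_{k=2}^N\bigl(k^m-(k-1)^m\bigr)=N^m-1$. This yields $|1-z|\,|T|\le 1+(N^m-1)+N^m = 2N^m$, hence $|T|\le \frac{2N^m}{|1-z|}$, and since $T=\sum_{k=0}^N k^m z^k$ for $m\ge 1$ the proof is complete.

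The only real point to watch is the constant. A direct summation by parts, writing $\sum_{k=0}^N k^m(S_k-S_{k-1})$ with $S_k=\sum_{j=0}^k z^j$, would invoke the bound $|S_k|\le 2/|1-z|$ both on the boundary term $N^m S_N$ and on the telescoped variation $\sum_{k}\bigl((k+1)^m-k^m\bigr)|S_k|$, producing the weaker bound $\frac{4N^m}{|1-z|}$. Multiplying by $1-z$ first avoids carrying the factor $1/|1-z|$ through the telescoping step, and that is precisely where the factor of two is saved; there is no genuine obstacle beyond this bookkeeping.
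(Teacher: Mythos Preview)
Your proof is correct and is essentially the same as the paper's: multiply the sum by $1-z$, observe the telescoping cancellation $\sum_{k}\bigl(k^m-(k-1)^m\bigr)=N^m$, and apply the triangle inequality to obtain $|1-z|\,|T|\le 2N^m$. The only cosmetic difference is that you separate the $m=0$ case explicitly, whereas the paper's computation handles all $m\in\mathbb{N}$ uniformly.
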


\begin{proof}
	Let us notice that a ``naive'' upper bound
	$$
		\left|\sum_{k=0}^N k^m z^k\right|\le \sum_{k=0}^N k^m \sim N^{m+1}
	$$
	is not sufficient for our purposes. To get the order of $N^m$ 
	we have to take into account cancellations in the sum (\ref{eq:sum3}) as follows:
	\begin{align*}
		(1-z)\sum_{k=0}^N k^m z^k=  -N^{m}z^{N+1} + \sum_{k=0}^N k^m z^k -\sum_{k=0}^{N-1} k^m z^{k+1}=  -N^{m}z^{N+1} +\sum_{k=1}^N (k^m-(k-1)^m)z^k. 
	\end{align*}
	Then by the triangle inequality 
	\begin{align*}
		|1-z|\left|\sum_{k=0}^N k^m z^k\right|\le N^{m} +\sum_{k=1}^N k^m-(k-1)^m = 2N^{m}. 
	\end{align*}
	\smallskip
	
	This completes the proof of Lemma \ref{lem:norm2}.
      \end{proof}

\bibliographystyle{abbrv}
\bibliography{bib}	
\end{document}